\newtheorem{proposition}[equation]{Proposition}
\newtheorem{theorem}[equation]{Theorem}
\newtheorem{corollary}[equation]{Corollary}
\newtheorem{lemma}[equation]{Lemma}
\theoremstyle{definition}
\newtheorem{problem}[equation]{Problem}
\newtheorem{definition}[equation]{Definition}
\newtheorem{definitions}[equation]{Definitions}
\newtheorem{remark}[equation]{Remark}
\newtheorem{remarks}[equation]{Remarks}
\newtheorem{example}[equation]{Example}
\newtheorem{examples}[equation]{Examples}
\newtheorem{conjecture}[equation]{Conjecture}
\numberwithin{equation}{section}
\newcommand{\spandsp}{\mbox{$\qquad\text{and}\qquad$}}
\newcommand{\sporsp}{\mbox{$\qquad\text{or}\qquad$}}
\newcommand{\sands}{\mbox{$\quad\text{and}\quad$}}
\newcommand{\sors}{\mbox{$\quad\text{or}\quad$}}
\newcommand{\sts}[1]{\mbox{$\quad\text{#1}\quad$}}
\newcommand{\Hom}{\operatorname{Hom}}
\newcommand{\letbe}{\mathbin{\raisebox{.4pt}{:}\!=}}
\newcommand{\GL}{\mbox{${\it GL}$}}
\newcommand{\?}{\mskip-2mu}
\newcommand{\+}{\mskip-.5mu}
\newcommand{\£}{\mskip1mu}
\newcommand{\MU}{\mbox{\it MU\,}}
\newcommand{\SU}{\mbox{$\it SU$}}
\newcommand{\KO}{\mbox{\it KO\/}}
\newcommand{\BU}{\mbox{\it BU}}
\newcommand{\bC}{\mathbb{C}}
\newcommand{\bQ}{\mathbb{Q}}
\newcommand{\bR}{\mathbb{R}}
\newcommand{\bZ}{\mathbb{Z}}
\newcommand{\fix}{\operatorname{Fix}}
\newcommand{\ra}{\rightarrow}
\newcommand{\lra}{\longrightarrow}
\newcommand{\llra}{\relbar\joinrel\hspace{-1pt}\longrightarrow}
\newcommand{\lllra}{\relbar\joinrel\hspace{-1pt}\llra}
\newcommand{\llllra}{\relbar\joinrel\hspace{-1pt}\lllra}
\newcommand{\llla}{\longleftarrow\joinrel\hspace{-1pt}\relbar}
\newcommand{\lllla}{\llla\joinrel\hspace{-1pt}\relbar}
\newcommand{\brpm}{\mbox{$\bR_{\scriptscriptstyle\geqslant}^m$}}
\newcommand{\srf}[1]{\mbox{${\text{\it SR}^F}$}}
\newcommand{\cf}{\mbox{\it{cf\hspace{1pt}}}}
\newcommand{\daja}{Davis-Januszkiewicz}
\newcommand{\EP}{\mbox{$E\1P$}}
\newcommand{\zp}{\mathcal Z_P}
\def\geq{\geqslant}
\def\le{\leqslant}
\def\leq{\leqslant}
\newcommand{\lowidehat}[1]{\mbox{$\;\widehat{\raisebox{0ex}[1.3ex]{$\!\?#1$}}$}}
\begin{document}
\bibliographystyle{plain} \title[Toric genera]
{Toric genera}

\author{Victor M Buchstaber} \address{Steklov
  Mathematical Institute, Russian Academy of Sciences, Gubkina Street
  8, 119991 Moscow, Russia; \emph{and} School of Mathematics,
  University of Manchester, Oxford Road, Manchester M13 9PL, England}
\email{buchstab@mi.ras.ru, victor.buchstaber@manchester.ac.uk}
\author{Taras E Panov}
\address{Department of Mathematics and Mechanics, Moscow State
University, Leninskie Gory, 119992 Moscow, Russia; \emph{and}
Institute for Theoretical and Experimental Physics, Moscow 117259, Russia}
\email{tpanov@mech.math.msu.ru}

\author{Nigel Ray}
\address{School of
Mathematics, University of Manchester, Oxford Road, Manchester
M13 9PL, England}
\email{nigel.ray@manchester.ac.uk}

\thanks{The first and second authors were supported by the Russian
Foundation for Basic Research (grants 08-01-00541 and 08-01-91855KO)
and by the Russian State Programme for the Support of Leading
Scientific Schools (grant 1824.2008.1); the second author was also
supported by MIMS, and by the P~Deligne 2004 Balzan prize in
Mathematics. The third author was supported by the UK EPSRC (grant
EP/F060831/1) and by the Royal Society (grant R104514).}

\keywords{combinatorial data, complex cobordism, equivariant genera,
formal group laws, Hirzebruch genera, homotopical genera, isolated fixed
points, omniorientation, rigidity, quasitoric manifold}

\begin{abstract}
  Our primary aim is to develop a theory of equivariant genera for
  stably complex manifolds equipped with compatible actions of a torus
  $T^k$. In the case of omnioriented quasitoric manifolds, we present
  computations that depend only on their defining combinatorial data;
  these draw inspiration from analogous calculations in toric
  geometry, which seek to express arithmetic, elliptic, and associated
  genera of toric varieties in terms only of their fans. Our theory
  focuses on the universal toric genus $\varPhi$, which was introduced
  independently by Krichever and L\"offler in 1974, albeit from
  radically different viewpoints. In fact $\varPhi$ is a version of
  tom Dieck's bundling transformation of 1970, defined on
  $T^k$-equivariant complex cobordism classes and taking values in the
  complex cobordism algebra $\varOmega^*_U(BT^k_+)$ of the classifying
  space. We proceed by combining the analytic, the formal group
  theoretic, and the homotopical approaches to genera, and refer to
  the index theoretic approach as a recurring source of insight and
  motivation. The resultant flexibility allows us to identify several
  distinct genera within our framework, and to introduce parametrised
  versions that apply to bundles equipped with a stably complex
  structure on the tangents along their fibres. In the presence of
  isolated fixed points, we obtain universal localisation formulae,
  whose applications include the identification of Krichever's
  generalised elliptic genus as universal amongst genera that are
  rigid on $SU$-manifolds. We follow the traditions of toric geometry
  by working with a variety of illustrative examples wherever
  possible. For background and prerequisites we attempt to reconcile
  the literature of east and west, which developed independently for
  several decades after the 1960s.
\end{abstract}

\maketitle

%
%
%
%
%
%
%
%
%

\section{Introduction}\label{in}

The study of equivariant cobordism theory was begun by Conner and
Floyd during the early 1960s, in the context of smooth manifolds with
periodic diffeomorphisms \cite{co-fl64I}. They extended their ideas to
stably almost complex manifolds soon afterwards \cite{co-fl64II}, and
the subject has continued to flourish ever since. Beyond 1965, however,
two schools of research emerged, whose interaction was curtailed by
difficulties of communication, and whose literature is only now being
reconciled.

In Moscow, Novikov \cite{novi67}, \cite{novi68} brought the theory
of formal group laws to bear on the subject, and provided a
beautiful formula for invariants of the complex cobordism class of
an almost complex $\bZ/p\,$-manifold with isolated fixed points.
He applied his formula to deduce relations in the complex
cobordism algebra $\varOmega^*_U(B\bZ/p)$ of the classifying space
of $\bZ/p$, which he named the {\it Conner-Floyd relations}. These
were developed further by Kasparov \cite{kasp69}, Mischenko
\cite{misc69}, Buchstaber and Novikov \cite{bu-no71}, Gusein-Zade
and Krichever \cite{gu-kr73}, and Panov \cite{pano98I},
\cite{pano98M}, and extended to circle actions by Gusein-Zade
\cite{guza71M}, \cite{guza71I}. Generalisation to arbitrary Lie
groups quickly followed, and led to \emph{Krichever's formula}
\cite[(2.7)]{kric74} for the universal toric genus $\varPhi(N)$ of
an almost complex manifold. Subsequently \cite{kric90}, Krichever
applied complex analytic techniques to the study of formal power
series in the cobordism algebra $\varOmega^*_U(BT^k)$, and deduced
important rigidity properties for equivariant generalised elliptic
genera.

Related results appeared simultaneously in the west, notably by tom
Dieck \cite{tomd70} in 1970, who developed ideas of Boardman
\cite{boar67} and Conner \cite{conn67} in describing a \emph{bundling
  transformation} for any Lie group $G$. These constructions were also
inspired by the $K$-theoretic approach of Atiyah, Bott, Segal, and
Singer \cite{at-bo68}, \cite{at-se68}, \cite{at-si68} to the index
theory of elliptic operators, and by applications of Atiyah and
Hirzebruch \cite{at-hi70} to the signature and
$\lowidehat{A}$-genus. Quillen immediately interpreted tom Dieck's
work in a more geometrical context \cite{quil71}, and computations
of various $G$-equivariant complex cobordism rings were begun soon
afterwards. Significant advances were made by Kosniowski
\cite{kosn76}, Landweber \cite{land72}, and Stong \cite{ston70}
for $\bZ/p$, and by Kosniowski and Yahia \cite{ko-ya83} for the
circle, amongst many others. In 1974, L\"offler \cite{loff74}
focused on the toric case of tom Dieck's transformation, and
adapted earlier results of Conner and Floyd on stably complex
$T^k$-manifolds to give a formula for $\varPhi(N)$; henceforth, we
call this \emph{L\"offler's formula}. More recent publications of
Comeza\~na \cite{come96} and Sinha \cite{sinh01} included
informative summaries of progress during the intervening years,
and showed that much remained to be done, even for abelian $G$. In
2005 Hanke \cite{hank05} confirmed a conjecture of Sinha for the
case $T^k$, and revisited tom Dieck's constructions. Surprisingly,
there appears to be no reference in these works to the
Conner-Floyd relations or to Krichever's formula (or even
L\"offler's formula!) although several results are closely
related, and lead to overlapping information.

A third strand of research was stimulated in Japan by the work of
Hattori \cite{hatt78} and Kawakubo \cite{kawa80}, but was always well
integrated with western literature.

One of our aims is to discuss $\varPhi$ from the perspective of both
schools, and to clarify relationships between their language and
results. In so doing, we have attempted to combine aspects of their
notation and terminology with more recent conventions of equivariant
topology. In certain cases we have been guided by the articles in
\cite{mayetal96}, which provide a detailed and coherent overview,
and by the choices made in \cite{bu-ra08}, where some of our results
are summarised. We have also sought consistency with \cite{bu-ra07},
where the special case of the circle is outlined.

As explained by Comeza\~na \cite{come96}, $G$-equivariant complex
bordism and cobordism occur in several forms, which have occasionally
been confused in the literature. The first and second forms are
overtly geometric, and concern smooth $G$-manifolds whose equivariant
stably complex structures may be normal or tangential; the two
possibilities lead to distinct theories, in contrast to the
non-equivariant case. The third form is homotopical, and concerns the
$G$-equivariant Thom spectrum $MU_G$, whose constituent spaces are
Thom complexes over Grassmannians of complex representations. Finally,
there is the Borel form, defined as the non-equivariant complex
bordism and cobordism of an appropriate Borel construction. We
describe and distinguish between these forms as necessary below. They
are closely inter-connected, and tom Dieck's bundling transformation
may be interpreted as a completion procedure on any of the first
three, taking values in the fourth.

Following L\"offler, and motivated by the emergence of toric
topology, we restrict attention to the standard $k$--dimensional
torus $T^k$, where $T$ is the multiplicative group of unimodular
complex numbers, oriented anti-clockwise. We take advantage of tom
Dieck's approach to introduce a generalised genus $\varPhi_X$, which
depends on a compact $T^k$-manifold $X$ as parameter space.
Interesting new examples ensue, that we shall investigate in more
detail elsewhere; here we focus mainly on the situation when $X$ is
a point, for which $\varPhi_X$ reduces to $\varPhi$. It takes values
in the algebra $\varOmega_*^U[[u_1,\dots,u_k]]$ of formal power
series over the non-equivariant complex cobordism ring, and may be
evaluated on an arbitrary stably complex $T^k$-manifold $N$ of
normal or tangential form. The coefficients of $\varPhi(N)$ are also
universal invariants of $N$, whose representing manifolds were
described by L\"offler \cite{loff74}; their origins lie in work of
Conner and Floyd \cite{co-fl64I}. These manifolds continue to
attract attention \cite{sinh01}, and we express them here as total
spaces of bundles over products of the bounded flag manifolds $B_n$
of \cite{ray86} and \cite{bu-ra98}. The $B_n$ are stably complex
boundaries, and our first contribution is to explain the
consequences of this fact for the rigidity properties of equivariant
genera.

When $N$ has isolated fixed points $x$, standard localisation
techniques allow us to rewrite the coefficients of $\varPhi(N)$ as
functions of the \emph{signs} $\varsigma(x)$ and \emph{weights}
$w_j(x)$, for $1\leq j\leq k$. Our second contribution is to extend
Krichever's formula to this context; his original version made no
mention of the $\varsigma(x)$, and holds only for almost complex $N$
(in which case all signs are necessarily $+1$). By focusing on the
constant term of $\varPhi(N)$, we also obtain expressions for the
non-equivariant complex cobordism class $[N]$ in terms of local
data. This method has already been successfully applied to evaluate
the Chern numbers of complex homogeneous spaces in an arbitrary
complex-oriented homology theory \cite{bu-te08}.

Ultimately, we specialise to omnioriented quasitoric manifolds
$M^{2n}$, which admit a $T^n$-action with isolated fixed points and a
compatible tangential stably complex structure. Any such $M$ is
determined by its \emph{combinatorial data} \cite{b-p-r07}, consisting
of the face poset of a simple polytope $P$ and an integral
dicharacteristic matrix $\varLambda$. Our third contribution is to
express the signs and weights of each fixed point in terms of
$(P,\varLambda)$, and so reduce the computation of $\varPhi(M)$ to a
purely combinatorial affair. By way of illustration, we discuss
families of examples whose associated polyhedra are simplices or
hypercubes; these were not available in combinatorial form before the
advent of toric topology.

Philosophically, four approaches to genera have emerged since
Hirzebruch's original formulation, and we hope that our work
offers some pointers towards their unification. The analytic and
function theoretic viewpoint has prospered in Russian literature,
whereas the homotopical approach has featured mainly in the west;
the machinery of formal group laws, however, has been well-oiled
by both schools, and the r\^ole of index theory has been enhanced
by many authors since it was originally suggested by Gelfand
\cite{gelf60}. Ironically, the dichotomy between the first and
second is illustrated by comparing computations of the Chern-Dold
character \cite{buch70} with applications of the Boardman
homomorphism and Hurewicz genus \cite{ray72}. In essence, these
bear witness to the remarkable influence of Novikov and Adams
respectively.

Our work impinges least on the index-theoretic approach, to which
we appeal mainly for motivation and completeness. Nevertheless, in
Section \ref{geri} we find it instructive to translate Krichever's
description of classical rigidity into the universal framework.
The relationship between indices and genera was first established
by the well-known Hirzebruch–-Riemann-Roch theorem~\cite{hirz66},
which expresses the index of the twisted Dolbeault complex on a
complex manifold $M$ as a characteristic number
$\langle\mathop{ch}\xi\mathop{td}\,(M),\sigma ^H_M\rangle$, where
$\xi$ is the twisting bundle; in particular, the untwisted case
yields the Todd genus. The cohomological form \cite{at-si68b} of
the Atiyah-Singer index theorem~\cite{at-si68} leads to explicit
descriptions of several other Hirzebruch genera as indices. For
example, the $\lowidehat{A}$-genus is the index of the Dirac
operator on Spin manifolds, and the signature is the index of the
signature operator on oriented manifolds.

Before we begin it is convenient to establish the following notation
and conventions, of which we make regular use.

All our topological spaces are compactly generated and weakly
Hausdorff \cite{vogt71}, and underlie the model category of
$T^k$-spaces and $T^k$-maps. It is often important to take account
of basepoints, in which case we insist that they be fixed by $T^k$;
for example, $X_+$ denotes the union of a $T^k$-space $X$ and a
disjoint fixed point $*$.

For any $n\geq0$, we refer to the action of $T^{n+1}$ by
coordinatewise multiplication on the unit sphere $S^{2n+1}\subset
\bC^{n+1}$ as the \emph{standard action}, and its restriction to the
diagonal circle $T_\delta<T^n$ as the \emph{diagonal action}. The
orbit space $S^{2n+1}/T_\delta$ is therefore $\bC P^n$; it is a
toric variety with respect to the action of the quotient $n$-torus
$T^{n+1}/T_\delta$, whose fan is normal to the standard $n$-simplex
$\varDelta^n\subset\bR^n$. The $k$-fold product
\begin{equation}\label{prodsphs}
\varPi\?S(q)\;\letbe\;\prod^kS^{2q+1}\subset\mathbb{C}^{k(q+1)}
\end{equation}
of $(2q+1)$-spheres admits a diagonal action of $T^k$, whose orbit
space $\varPi\?P(q)$ is the corresponding $k$-fold product of $\bC
P^q$s. The induced action of the quotient $kq$-torus
$T^{k(q+1)}/T^k$ turns $\varPi\?P(q)$ into a product of toric
varieties, whose fan is normal to the $k$-fold product of simplices
$\varPi\?\varDelta^q\subset\bR^{kq}$.

Given a free $T^k$-space $X$ and an arbitrary $T^k$-space $Y$, we
write the quotient of $X\times Y$ by the $T^k$-action
\[
  t\cdot(x,y)\;=\;(t^{-1}x,ty)
\]
as $X\times_{T^k}Y$; it is the total space of a bundle over $X/T^k$,
with fibre $Y$. If $X$ is the universal contractible $T^k$-space
$ET^k$, then $ET^k\times_{T^k}Y$ is the \emph{Borel construction} on
$Y$, otherwise known as the \emph{homotopy quotient} of the action.
The corresponding bundle
\[
Y\lra ET^k\times_{T^k}Y\lra BT^k
\]
therefore lies over the classifying space $BT^k$.

We let $V$ denote a generic $|V|$-dimensional unitary representation
space for $T^k$, which may also be interpreted as a $|V|$-dimensional
$T^k$-bundle over a point. Its unit sphere $S(V)\subset V$ and
one-point compactification $V\subset S^V$ both inherit canonical
$T^k$-actions. We may also write $V$ for the
product $T^k$-bundle $X\times V\ra X$ over an arbitrary $T^k$-space
$X$, under the diagonal action on the total space.

For any commutative ring spectrum $E$, we adopt the convention that
the homology and cohomology groups $E_*(X)$ and $E^*(X)$ are
\emph{reduced} for all spaces $X$. So their unreduced counterparts
are given by $E_*(X_+)$ and $E^*(X_+)$. The \emph{coefficient ring}
$E_*$ is the homotopy ring $\pi_*(E)$, and is therefore given by
$E_*(S^0)$ or $E^{-*}(S^0)$; we identify the homological and
cohomological versions without further comment, and interpret
$E_*(X_+)$ and $E^*(X_+)$ as $E_*$-modules and $E_*$-algebras
respectively. The corresponding conventions apply equally well to
the equivariant cobordism spectra $\MU_{T^k}$, and the $T^k$-spaces
$X$ with which we deal.

Our ring spectra are also \emph{complex oriented}, by means of a
class $x^E$ in $E^2(\bC P^\infty)$ that restricts to a generator
of $E^2(\bC P^1)$. It follows that $E^*(\bC P^\infty_+)$ is
isomorphic to $E_*[[x^E]]$ as $E_*$-algebras, and that $E$-theory
Chern classes exist for complex vector bundles; in particular,
$x^E$ is the first Chern class $c_1^E(\zeta)$ of the
\emph{conjugate} Hopf line bundle $\zeta\letbe\bar \eta$ over $\bC
P^\infty$. The universal example is the complex cobordism spectum
$\MU$, whose homotopy ring is the complex cobordism ring
$\varOmega_*^U$. If we identify the Thom space of $\zeta$ with
$\bC P^\infty$, then $x^E$ may be interpreted as a Thom class
$t^E(\zeta)$, and extended to a universal Thom class
$t^E\colon\MU\ra E$.

Such spectra $E$ may be \emph{doubly oriented} by choosing distinct
orientations $x_1$ and $x_2$. These are necessarily linked by
mutually inverse power series
\begin{equation}\label{bmseries}
x_2\;=\;\sum_{j\geq 0}b^E_j\?x_1^{j+1}\spandsp x_1\;=\;\sum_{j\geq
0}m^E_j\?x_2^{j+1}
\end{equation}
in $E^2(\bC P^\infty)$, where $b^E_j$ and $m^E_j$ lie in $E_{2j}$
for $j\geq 1$; in particular, $b^E_0\!=m^E_0\!=1$. We abbreviate
\eqref{bmseries} whenever possible, by writing the series as
$b^E(x_1)$ and $m^E(x_2)$ respectively; thus $b^E(m^E(x_2))=x_2$
and $m^E(b^E(x_1))=x_1$. It follows that the Thom classes
$t_1(\zeta)$ and $t_2(\zeta)$ differ by a unit in $E^0(\bC
P^\infty_+)$, which is determined by \eqref{bmseries}. More
precisely, the equations
\[
  t_2(\zeta)\;=\;t_1(\zeta)\cdot b^E_+(x_1)\spandsp
  t_1(\zeta)\;=\;t_2(\zeta)\cdot m^E_+(x_2)
\]
hold in $E^2(\bC P^\infty_+)$, where $b^E_+(x_1)=b^E(x_1)/x_1$ and
$m^E_+(x_2)=m^E(x_2)/x_2$.

A smash product $E\wedge F$ of complex oriented spectra is doubly
oriented by $x^E\wedge 1$ and $1\wedge x^F$ in $(E\wedge F)^2(\bC
P^\infty)$; we abbreviate these to $x^E$ and $x^F$ respectively.
\begin{example}\label{hmuexa}
  The motivating example is that of $E=H$, the integral Eilenberg-Mac
  Lane spectrum, and $F=\MU$. Then
  \smash{$b_j\letbe b_j^{H\wedge M\?U}$} is the standard
  indecomposable generator of $H_{2j}(\MU)$, and $t^H$ is the
  Thom class in $H^0(\MU)$. The inclusion $h^{M\?U}\colon\MU\ra
  H\wedge\MU$ may also be interpreted as a Thom class, and induces the
  Hurewicz homomorphism $h^{M\?U}_*\colon\varOmega^U_*\ra H_*(\MU)$.
\end{example}

We refer readers to Adams \cite{adam74} for further details, and to
\cite{bu-ra98} for applications of the universal doubly oriented
example $\MU\wedge\MU$.

Several colleagues have provided valuable input and assistance
during the preparation of this work, and it is a pleasure to
acknowledge their contribution. Particular thanks are due to
Andrew Baker, who drew our attention to the relevance of the work
of Dold \cite{dold78}, and to Burt Totaro, whose comments on an
earlier version led directly to Theorem \ref{multandrig}.
%
%
%
%
%
%
%
%
%

\section{The universal toric genus}

In this section we introduce tom Dieck's bundling transformation
$\alpha$ in the case of the torus $T^k$, and explain how to convert it
into our genus $\varPhi$. We discuss the geometric and homotopical
forms of $T^k$-equivariant complex cobordism theory on which $\varPhi$
is defined, and the Borel form which constitutes its target. In
particular, we revisit formulae of L\"offler and Krichever for its
evaluation. We refer readers to the papers of Comeza\~na
\cite{come96}, Comeza\~na and May \cite{co-ma96}, Costenoble
\cite{cost96}, and Greenlees and May \cite{gr-ma96} for a
comprehensive introduction to equivariant bordism and cobordism
theory.

We begin by recalling the Thom $T^k$-spectrum $\MU_{T^k}$, whose
spaces are indexed by the complex representations of $T^k$. Each
$\MU_{T^k}(V)$ is the Thom $T^k$-space of the universal
$|V|$-dimensional complex $T^k$-vector bundle $\gamma_{|V|}$, and each
spectrum map $S^{W\ominus V}\wedge\MU_{T^k}(V)\ra\MU_{T^k}(W)$ is
induced by the inclusion $V<W$ of a $T^k$-submodule. The
\emph{homotopical form} $\MU^*_{T^k}(X_+)$ of equivariant complex
cobordism theory is defined for any $T^k$-space $X$ by stabilising the
pointed $T^k$-homotopy sets $[S^V\wedge X_+,\MU_{T^k}(W)]_{T^k}$ as
usual.

Applying the Borel construction to $\gamma_{|V|}$ yields a complex
$|V|$-plane bundle, whose Thom space is homeomorphic to
$ET^k_+\wedge_{T^k}\MU_{T^k}(V)$ for every complex representation
$V$. The resulting classifying maps induce maps of pointed homotopy
sets $[S^V\wedge X_+,\MU_{T^k}(W)]_{T^k} \ra[(ET^k_+\wedge
S^V\wedge X_+)/T^k,\MU(|W|)]$, whose target stabilises to the
non-equivariant cobordism algebra $\varOmega_U^{*+2|V|}((S^V\wedge
ET^k_+\wedge X_+)/{T^k})$. Applying the Thom isomorphism yields
the transformation
\[
\alpha\colon\MU^*_{T^k}(X_+)
\lra\varOmega_U^*((ET^k\times_{T^k}X)_+),
\]
which is multiplicative and preserves Thom classes \cite[Proposition
1.2]{tomd70}.

In the language of \cite{co-ma96}, we may summarise the construction
of $\alpha$ using the homomorphism
$\MU^*_{T^k}(X_+)\ra\MU^*_{T^k}((ET^k\times X)_+)$ induced by the
$T^k$- projection $ET^k\times X\ra X$; since $\MU_{T^k}$ is
\emph{split} and $T^k$ acts freely on $ET^k\times X$, the target may
be replaced by $\varOmega_U^*((ET^k\times_{T^k}X)_+)$. Moreover,
$\alpha$ is an isomorphism whenever $X$ is compact and $T^k$ acts
freely.

L\"offler's \emph{completion theorem} \cite{mayetal96} states that
$\alpha$ is precisely the homomorphism of completion with respect to
the augmentation ideal in $\MU^*_{T^k}(X_+)$.

Our primary purpose is to compute $\alpha$ on geometrically
defined cobordism classes, so it is natural to seek an equivariant
version of Quillen's formulation \cite{quil71} of the complex
cobordism functor. However, his approach relies on normal complex
structures, whereas many of our examples present themselves most
readily in terms of tangential information. In the non-equivariant
situation, the two forms of data are, of course, interchangeable;
but the same does not hold equivariantly. This fact was often
ignored in early literature, and appears only to have been made
explicit in 1995, by Comeza\~na \cite[\S3]{come96}. As we shall
see below, tangential structures may be converted to normal, but
the procedure is not reversible.

Over any smooth compact manifold $X$, we formulate our
constructions in terms of smooth bundles
\smash{$E\stackrel{\pi}{\ra}X$} with compact $d$-dimensional fibre
$F$, following Dold \cite{dold78}.
\begin{definitions}\label{sttacotkbd}
  The bundle $\pi$ is \emph{stably tangentially complex} when the
  bundle $\tau_F(E)$ of tangents along the fibre is equipped with a
  stably complex structure $c_\tau(\pi)$.
  Two such bundles $\pi$ and $\pi'$ are \emph{equivalent} when
  $c_\tau(\pi)$ and $c_\tau(\pi')$ are homotopic upon the addition of
  appropriate trivial summands; and their equivalence classes are
  \emph{cobordant} when there exists a third bundle $\rho$, whose
  boundary \smash{$\partial L\stackrel{\rho}{\ra}X$} may be identified
  with $E_1\sqcup E_2\ra X$ in the standard fashion.

  If $\pi$ is $T^k$-equivariant, then it is stably tangentially
  complex \emph{as a $T^k$-bundle} when $c_\tau(\pi)$ is also
  $T^k$-equivariant. The notions of \emph{equivariant equivalence}
  and \emph{equivariant cobordism} apply to such bundles accordingly.
\end{definitions}

We interpret $c_\tau(\pi)$ as a real isomorphism
\begin{equation}\label{ctaupi}
  \tau_F(E)\oplus\bR^{2l-d}\lra\xi\,,
\end{equation}
for some complex $l$-plane bundle $\xi$ over $E$; so the composition
\begin{equation} \label{rtpi}
  r(t)\colon\xi\stackrel{c_\tau^{-1}(\pi)}{\lllra}\tau_F(E)\oplus
  \bR^{2l-d}\stackrel{d\+ (t\cdot)\oplus I}{\llllra}
  \tau_F(E)\oplus\bR^{2l-d} \stackrel{c_\tau(\pi)}{\lllra}\xi
\end{equation}
is a complex transformation for any $t\in T^k$, where $d(t\cdot)$ is
the differential of the action by $t$. In other words, \eqref{rtpi}
determines a representation $r\colon
T^k\to\Hom_\mathbb{C}(\xi,\xi)$.

If $X_+=S^0$, then $X$ is the one point-space $*$, and we may
identify both $F$ and $E$ with some $d$-dimensional smooth
$T^k$-manifold $M$, and $\tau_F(E)$ with its tangent bundle
$\tau(M)$. So $c_\tau(\pi)$ reduces to a stably tangentially
complex $T^k$-equivariant structure $c_\tau$ on $M$, and its
cobordism class belongs to the geometric bordism group
\smash{$\varOmega_d^{U:T^k}$} of \cite[\S3]{come96}. We therefore
denote the set of equivariant cobordism classes of stably
tangentially complex $T^k$-bundles over $X$ by
$\varOmega^{-d}_{U:T^k}(X_+)$. It is an abelian group under
disjoint union of total spaces, and
\smash{$\varOmega^*_{U:T^k}(X_+)$} is a graded
\smash{$\varOmega_*^{U:T^k}\!$}-module under cartesian product.
Furthermore, \smash{$\varOmega^*_{U:T^k}(-)$} is functorial with
respect to pullback along smooth $T^k$-maps $Y\ra X$.

\begin{proposition}\label{cttcn}
Given any smooth compact $T^k$-manifold $X$, there is a canonical
homomorphism
\[
  \nu\colon\varOmega^{-d}_{U:T^k}(X_+)\lra\MU^{-d}_{T^k}(X_+)
\]
for every $d\geq 0$.
\end{proposition}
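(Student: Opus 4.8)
The plan is to construct $\nu$ by converting a stably tangentially complex $T^k$-bundle into the normal-type data that represents an element of the homotopical cobordism $\MU^{-d}_{T^k}(X_+)$, and then to check this is well defined on cobordism classes and additive. So first I would start with a stably tangentially complex $T^k$-bundle $E\stackrel{\pi}{\ra}X$ with $d$-dimensional fibre, equipped with the complex $l$-plane bundle $\xi$ over $E$ and the real isomorphism $c_\tau(\pi)\co\tau_F(E)\oplus\bR^{2l-d}\ra\xi$ of \eqref{ctaupi}. Using the compactness of $X$ (hence of $E$, since $F$ is compact), I would choose a $T^k$-equivariant embedding $E\hookrightarrow X\times V$ over $X$, for some unitary $T^k$-representation $V$ of suitably large dimension; this is possible by the equivariant embedding theorem applied fibrewise, absorbing the base directions. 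Let $\nu_\pi$ denote the normal bundle of this embedding inside the fibre directions, i.e.\ the normal bundle of $E$ in $X\times V$ relative to $X$. Then $\tau_F(E)\oplus\nu_\pi\cong \underline{V}$ as real $T^k$-bundles over $E$.

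The second step is to transport the complex structure: from $c_\tau(\pi)$ we get $\xi\oplus\nu_\pi\oplus\bR^{2l-d}\cong \underline{V}\oplus\bR^{2l-d}$, and after stabilising (adding a further trivial $T^k$-representation to make the dimensions match and absorb the $\bR^{2l-d}$ summand into a complex one) this equips $\nu_\pi$ with a stable complex $T^k$-structure, say realised by a complex $T^k$-bundle $\eta$ over $E$ with $\eta\cong\nu_\pi$ stably. The Pontryagin-Thom construction now applies: collapsing the complement of a tubular neighbourhood of $E$ in $X\times S^V$ gives a $T^k$-map $S^V\wedge X_+\ra \Th(\eta)$, and the classifying map of $\eta$ composed with the structure map carries $\Th(\eta)$ into $\MU_{T^k}(W)$ for an appropriate $T^k$-module $W$ with $|W|=l'$ the (stabilised) complex rank. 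Stabilising over $V$ and $W$ yields the class $\nu(\pi)\in\MU^{-d}_{T^k}(X_+)$, where the degree shift $-d$ records $|W|-|V|$ together with the fibre dimension.

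The third step is to check independence of choices and that $\nu$ descends to cobordism classes: changing the embedding, the representation $V$, or the trivialising summands changes the Pontryagin-Thom data through an evident $T^k$-homotopy (standard isotopy-of-embeddings and suspension arguments, now equivariant), so $\nu(\pi)$ is well defined on equivalence classes in the sense of Definitions \ref{sttacotkbd}; and a cobordism $\partial L\stackrel{\rho}{\ra}X$ with the induced stably tangentially complex structure produces, by the same construction applied to $L\hookrightarrow X\times V\times[0,1]$, a $T^k$-homotopy between the classes of $E_1$ and $E_2$, so $\nu$ is defined on $\varOmega^{-d}_{U:T^k}(X_+)$. Additivity under disjoint union is immediate, since disjoint embeddings into $X\times V$ give a wedge of Pontryagin-Thom collapses, hence the sum in $\MU^{-d}_{T^k}(X_+)$; this makes $\nu$ a homomorphism. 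Finally I would note the construction is visibly natural for pullback along smooth $T^k$-maps $Y\ra X$, and that when $X=*$ it is the classical passage from a stably tangentially complex $T^k$-manifold to its class in Comeza\~na's homotopical equivariant bordism, which fixes the normalisation.

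I expect the main obstacle to be the equivariant transversality and embedding input: unlike the non-equivariant case, one must embed $E$ $T^k$-equivariantly into a \emph{linear} $T^k$-space over $X$ and ensure the Pontryagin-Thom collapse lands in the Thom space of the \emph{universal} complex $T^k$-bundle $\gamma_{|W|}$ — this is exactly where the representation-indexing of the spectrum $\MU_{T^k}$ matters, and where one uses that $T^k$ is abelian so that its representations decompose into characters and the relevant Grassmannians are well behaved. The bookkeeping of which trivial summands are real versus complex, and the consequent degree shift, is routine but must be done carefully to match the stated target degree $-d$.
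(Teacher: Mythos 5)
Your proposal follows essentially the same route as the paper: choose a $T^k$-equivariant embedding $(\pi,i)\colon E\ra X\times V$, identify the normal bundle in the fibre directions, transport the tangential stably complex structure $c_\tau(\pi)$ to a normal complex structure, Pontryagin--Thom collapse into a Thom space, classify into $\MU_{T^k}$, and check well-definedness and additivity. The one place where you are too quick is the passage from
\[
\xi\oplus\nu(\pi,i)\;\cong\;V\oplus\bR^{2l-d}
\]
to ``a stable complex $T^k$-structure on $\nu(\pi,i)$''. Merely adding a trivial representation to turn the $\bR^{2l-d}$ into a complex summand does not help: $\xi$ is an honest (generally non-trivial) complex $T^k$-bundle sitting on the wrong side, and you cannot cancel it by stabilising with trivial summands alone. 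The paper's trick is to choose a complementary complex $T^k$-bundle $\xi^\perp$ with $\xi\oplus\xi^\perp\cong W$ a unitary $T^k$-representation, and then rewrite the isomorphism as
\[
W\oplus\nu(\pi,i)\;\cong\;\xi^\perp\oplus V\oplus\bR^{2l-d},
\]
so that after absorbing the real trivial summand (adding $\bR$ if $d$ is odd) the right side is genuinely complex and one obtains a complex $T^k$-structure on the $W$-stabilisation of $\nu(\pi,i)$; the classifying map of $\xi^\perp\oplus V\oplus\bC^{l-n}$ then Thomifies to land in $M(\gamma_{|V|+|W|-n})$, giving $f(\pi)\colon S^{V\oplus W}\wedge X_+\to M(\gamma_{|V|+|W|-n})$. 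You flag the bookkeeping as ``routine but must be done carefully'' — this $\xi^\perp$ step is exactly that bookkeeping, and it is the one genuinely non-trivial move; spelling it out would close the gap. (Also note a small slip in your displayed isomorphism: it should read $\xi\oplus\nu_\pi\cong V\oplus\bR^{2l-d}$, without the extra $\bR^{2l-d}$ on the left.) The rest — well-definedness via isotopy and suspension, cobordism invariance via the construction applied to $L\hookrightarrow X\times V\times[0,1]$, and additivity via disjoint union — matches the paper.
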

\begin{proof}
  Let $\pi$ in \smash{$\varOmega^{-d}_{U:T^k}(X_+)$} denote the
  cobordism class of a stably tangentially complex $T^k$-bundle $F\ra
  E\stackrel{\pi}{\ra}X$. Following the construction of the transfer
  \cite{be-go75}, choose a $T^k$-equivariant embedding $i\colon E\ra
  V$ into a unitary $T^k$-representation space $V$ \cite{bred72}, and
  consider the embedding $(\pi,i)\colon E\ra X\times V$; it is
  $T^k$-equivariant with respect to the diagonal action on $X\times
  V$, and its normal bundle admits an equivariant isomorphism
  $c\colon\tau_F(E)\oplus\nu(\pi,i)\ra V$ of bundles over $E$. Now
  combine $c$ with $c_\tau(\pi)$ of \eqref{ctaupi} to obtain an
  equivariant isomorphism
\begin{equation}\label{stnocplxtk}
W\oplus\nu(\pi,i)\lra\xi^{\perp}\oplus V\oplus\bR^{2l-d},
\end{equation}
where $W\letbe \xi^\perp\oplus\xi$ is a unitary $T^k$-decomposition
for some representation space $W$.

If $d$ is even, \eqref{stnocplxtk} determines a complex
$T^k$-structure on an equivariant stabilisation of $\nu(\pi,i)$; if
$d$ is odd, a further summand $\bR$ must be added. For notational
convenience, assume the former, and write $d=2n$. Then Thomify the
classifying map for $\xi^{\perp}\oplus V\oplus\bC^{l-n}$ to give a
$T^k$-equivariant map
\[
S^W\wedge M(\nu(\pi,i))\lra M(\gamma_{|V|+|W|-n}),
\]
and compose with the Pontryagin-Thom construction on $\nu(\pi,i)$ to
obtain
\[
  f(\pi)\colon S^{V\oplus W}\wedge X_+\lra M(\gamma_{|V|+|W|-n}).
\]
If $\pi$ and $\pi'$ are equivalent, then $f(\pi)$ and $f(\pi')$
differ only by suspension; if they are cobordant, then $f(\pi)$
and $f(\pi')$ are stably $T^k$-homotopic. So define $\nu(\pi)$ to
be the $T^k$-homotopy class of $f(\pi)$, as an element of
\smash{$\MU^{-d}_{T^k}(X_+)$}.

The linearity of $\nu$ follows immediately from the fact that addition
in $\varOmega^{-d}_{U:T^k}(X_+)$ is induced by disjoint union.
\end{proof}

The proof of Proposition \ref{cttcn} also shows that $\nu$ factors
through the geometric cobordism group of stably normally complex
$T^k$-manifolds over $X$.
\begin{definition}
For any smooth compact $T^k$-manifold $X$, the \emph{universal toric
genus} is the homomorphism
\[
\varPhi_X\colon\varOmega^*_{U:T^k}(X_+)\lra
\varOmega_U^*((ET^k\times_{T^k}X)_+)
\]
given by the composition $\alpha\cdot\nu$.
\end{definition}
Restricting attention to the case $X_+=S^0$ defines
\[
  \varPhi\colon\varOmega_*^{U:T^k}\lra\varOmega^U_*[[u_1,\dots,u_k]],
\]
where the target is isomorphic to $\varOmega^*_U(BT^k_+)$, and
$u_j$ is the cobordism Chern class $c_1^{M\?U}(\zeta_j)$ of the
conjugate Hopf bundle over the $j$th factor of $BT^k$, for $1\leq
j\leq k$. This version of $\varPhi$ appears in \cite{bu-ra07} for
$k=1$, and is essentially the homomorphism introduced
independently by L\"offler \cite{loff74} and Krichever
\cite{kric76}. It is defined on triples $(M,a,c_\tau)$, where $a$
denotes the $T^k$-action; whenever possible, we omit one or both
of $a$, $c_\tau$ from the notation.

We follow Krichever by interpreting $\varPhi$ as an equivariant
genus, in the sense that it is a multiplicative cobordism invariant
of stably complex $T^k$-manifolds, and takes values in a graded ring
with explicit generators and relations. As such it is an equivariant
extension of Hirzebruch's original notion of genus \cite{hirz66},
and is closely related to the theory of formal group laws.

When $X_+=S^0$, Hanke \cite{hank05} and L\"offler \cite[(3.1)
Satz]{loff74} prove that $\nu$ and $\alpha$ are monic; therefore
so is $\varPhi$. On the other hand, there are two important
reasons why $\nu$ cannot be epic. Firstly, it is defined on stably
tangential structures by converting them into stably normal
information; this procedure cannot be reversed equivariantly,
because the former are stabilised only by trivial representations
of $T^k$, whereas the latter are stabilised by arbitrary
representations $V$. Secondly, homotopical cobordism groups are
periodic, and each representation $W$ gives rise to an invertible
\emph{Euler class} $e(W)$ in $\MU^{T^k}_{-|W|}$ whenever the fixed
point set $\fix(W)$ is non-empty; this phenomenon exemplifies the
failure of equivariant transversality. Since tom Dieck also proves
that $\alpha$ is not epic, the same is true of $\varPhi$.

It is now convenient to describe $\varPhi_X$ purely in terms of stably
complex structures, by using geometrical models for the target algebra
$\varOmega_U^*((ET^k\times_{T^k}X)_+)$.

Let $\pi$ in \smash{$\varOmega^{-d}_{U:T^k}(X_+)$} be represented by
$\pi\colon E\ra X$, as in the proof of Proposition \ref{cttcn}, and
consider the smooth fibre bundle
\begin{equation}\label{phitan}
F\lllra ET^k\times_{T^k}E\stackrel{1\times_{\?T^{\?k}}\pi}{\lllra}
ET^k\times_{T^k}X
\end{equation}
obtained by applying the Borel construction. The bundle of
tangents along the fibre is the Borelification
$1\times_{T^k}\tau_F(E)$, and so inherits a stably complex
structure $1\times_{T^k}c_\tau(\pi)$. Moreover,
$1\times_{T^k}c_\tau(\pi)$ and $1\times_{T^k}c_\tau(\pi')$ are
equivalent whenever $\pi$ and $\pi'$ are equivalent
representatives for $\pi$, and cobordant whenever $\pi$ and $\pi'$
are cobordant, albeit over the infinite dimensional manifold
$ET^k\times_{T^k}X$. In this sense, \eqref{phitan} represents
$\varPhi_X(\pi)$. For a more conventional description, we convert
\eqref{phitan} to a Quillen cobordism class by mimicking the proof
of Proposition \ref{cttcn} as follows.

The Borelification $h$ of the embedding $(\pi,i)$ is also an
embedding, whose normal bundle admits the stably complex structure
that is complementary to $1\times_{T^k}c_\tau(\pi)$. So the
factorisation
\begin{equation}\label{phinorm}
ET^k\times_{T^k}E\stackrel{h}{\llra}ET^k\times_{T^k}(X\times V)
\stackrel{r}{\llra}ET^k\times_{T^k}X
\end{equation}
determines a \emph{complex orientation} for $1\times_{T^k}\pi$, as
defined in \cite[(1.1)]{quil71}. The map $1\times_{T^k}\pi$ has
dimension $d$, and its construction preserves cobordism classes in
the appropriate sense; applying the Pontryagin-Thom construction
confirms that it represents $\varPhi_X(\pi)$ in
$\varOmega_U^{-d}((ET^k\times_{T^k}X)_+)$. Following
\cite[(1.4)]{quil71}, we may therefore express $\varPhi_X(\pi)$ as
\smash{$(1\times_{T^k}\pi)_*1$}, where
\[
  (1\times_{T^k}\pi)_*\colon\varOmega^*_U(ET^k\times_{T^k}E)\llra
  \varOmega^{*-d}_U(ET^k\times_{T^k}X),
\]
denotes the \emph{Gysin homomorphism}. For notational simplicity, we
abbreviate $\varPhi_X(\pi)$ to $\varPhi(\pi)$ from this point onward.

In all relevant examples, the infinite dimensionality of
$ET^k\times_{T^k}X$ presents no problem because $ET^k$ may be
approximated by the compact manifolds $\varPi\?S(q)$ of
\eqref{prodsphs}, and $\lim^1$ arguments applied. In certain
circumstances, infinite dimensional manifolds may actually be
incorporated into the definitions, as proposed in \cite{ba-oz00}.

To be more explicit, we use the model $(\bC P^\infty)^k$ for
$BT^k$. The first Chern class in $\varOmega^2_U(\bC P^q)$ is
represented geometrically by an inclusion $\bC P^{q-1}\ra\bC P^q$,
whose normal bundle is $\zeta$. As $q$ increases, these classes
form an inverse system, whose limit defines $u\letbe x^{MU}$ in
$\varOmega^2_U(\bC P^\infty)$; it is represented geometrically by
the inclusion of a hyperplane. An additive basis for
$\varOmega_*^U[[u_1,\dots,u_k]]$ in dimension $2|\omega|$ is given
by the monomials $u^\omega=u_1^{\omega_1}\cdots u_k^{\omega_k}$,
where $\omega$ ranges over nonnegative integral vectors
$(\omega_1,\ldots,\omega_k)$, and
\smash{$|\omega|=\sum_j\omega_j$}. Every such monomial is
represented geometrically by a $k$-fold product of complex
subspaces of codimension $(\omega_1,\ldots,\omega_k)$ in $(\bC
P^\infty)^k$, with normal bundle
$\omega_1\zeta_1\oplus\dots\oplus\omega_k\zeta_k$.

Now restrict attention to the case $X=*$. Given the equivariant
cobordism class of $(M^{2n},a,c_\tau)$, we may approximate
\eqref{phitan} and \eqref{phinorm} over $\varPi\?P(q)$ by stably
tangential and stably normal complex structures on the
$2(kq+n)$-dimensional manifold $W_q=\varPi\?S(q)\times_{T^k}M$.
The corresponding complex orientation is described by
\begin{equation}\label{phim}
  W_q\stackrel{i}{\lra}\varPi\?S(q)\times_{T^k}V
  \stackrel{r}{\lra}\varPi\?P(q),
\end{equation}
where $i$ denotes the Borelification of a $T^k$-equivariant
embedding $M\ra V$, and $r$ is the complex vector bundle induced
by projection. The normal bundle $\nu(i)$ is invested with the
complex structure complementary to $1\times_{T^k}c_\tau$, which
determines a complex cobordism class in $U^{-2n}(\varPi\? P(q))$.
As $q$ increases, these classes form an inverse system, whose
limit is $\varPhi(M,c_\tau)$ in $U^{-2n}(BT^k)$; it is represented
geometrically by the complex orientation $ET^k\times_{T^k}M\ra
ET^k\times_{T^k}V\ra BT^k$, which factorises the projection
\smash{$1\times_{T^k}\pi$}. In particular, $\varPhi(M,c_\tau)$ is
the Gysin image \smash{$(1\times_{T^k}\pi)_*1$}.

If we write
\begin{equation}\label{defgomega}
  \varPhi(M,c_\tau)\;=\;\sum_\omega g_\omega(M)\,u^\omega
\end{equation}
in $\varOmega_*^U[[u_1,\dots,u_k]]$, then the coefficients
$g_\omega(M)$ lie in $\varOmega^U_{2(|\omega|+n)}$, and their
representatives may be interpreted as universal operations on the
cobordism class of $M$. If $c_\tau$ is converted to stably
\emph{normal} data, then L\"offler \cite[(3.2) Satz]{loff74} has
described these operations using constructions of Conner and Floyd and
tom Dieck, as follows.

Let $S^3\subset\bC^2$ be a $T$-space, via
$t\cdot(z_1,z_2)=(tz_1,t^{-1}z_2)$, and let $T$ operate on $M$ by
restricting the $T^k$-action to the $j$th coordinate circle, for
$1\leq j\leq k$. So $\varGamma_j(M)\letbe M\times_TS^3$ is a
stably normally complex $T^k$-manifold under the action
\[
(t_1,\dots,t_k)\cdot[m,(z_1,z_2)]\;=\;
[(t_1,\dots,t_k)\cdot m,(t_jz_1,z_2)],
\]
and the operations $\varGamma_j$ may be composed.
\begin{proposition}\label{loffform}
  The coefficient $g_\omega(M)$ of \eqref{defgomega} is represented by
  the stably complex manifold
  $\varGamma^\omega(M)\letbe
  \varGamma_1^{\omega_1}\dots\varGamma_k^{\omega_k}(M)$;
  in particular, $g_0(M)=[M]$ in $\varOmega^U_{2n}$.
\end{proposition}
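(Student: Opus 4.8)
The plan is to evaluate $\varPhi(M)$ on the basis of $\varOmega^U_*(BT^k_+)$ dual to the monomials $u^\omega$, and then recognise the answer geometrically. Recall from the discussion above that $\varPhi(M)=(1\times_{T^k}\pi)_*1$ in $\varOmega^*_U(BT^k_+)\cong\varOmega^U_*[[u_1,\dots,u_k]]$, represented over the finite approximations $\varPi\?P(q)$ by the complex-oriented maps of \eqref{phim}; and that $\varOmega^U_*(BT^k_+)$ is a free $\varOmega^U_*$-module carrying a basis $\{b_\omega\}$ dual to $\{u^\omega\}$ under the (perfect) Kronecker pairing. Since $\varPhi(M)=\sum_\nu g_\nu(M)\,u^\nu$ by \eqref{defgomega}, $\varOmega^U_*$-bilinearity of the pairing gives $g_\omega(M)=\langle\varPhi(M),b_\omega\rangle$. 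The proposition therefore reduces to a convenient geometric representative for $b_\omega$, together with a geometric reading of the Kronecker pairing against a Gysin image.

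For the representative I would use the description of $b_\omega$ by a product of bounded flag manifolds, $b_\omega=[\,B_{\omega_1}\times\cdots\times B_{\omega_k}\stackrel{c}{\longrightarrow}BT^k\,]$, where $B_n=\varGamma_1^n(*)$, $B_0=*$, and $c$ classifies the canonical $T^k$-line bundle over the product (cf.\ \cite{ray86},\cite{bu-ra98}). For the pairing, since $\varPhi(M)$ is a Gysin image we may perturb $c$ to be transverse to $1\times_{T^k}\pi$ and apply base change for Gysin maps; this gives $\langle(1\times_{T^k}\pi)_*1,b_\omega\rangle=[\,\widetilde P\times_{T^k}M\,]$, the class of the total space of the pullback of the Borel fibration $M\to ET^k\times_{T^k}M\to BT^k$ along $c$, with the stably complex structure it inherits from the complex orientation \eqref{phim}; here $\widetilde P\to B_{\omega_1}\times\cdots\times B_{\omega_k}$ is the principal $T^k$-bundle pulled back from $ET^k\to BT^k$, so the pulled-back fibration is exactly $\widetilde P\times_{T^k}M$. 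When $\omega=0$ the product of flag manifolds is a point, $\widetilde P$ is a single free $T^k$-orbit, and the pullback is just the fibre $M$, which recovers the final assertion $g_0(M)=[M]$.

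It then remains to identify the cobordism class of $\widetilde P\times_{T^k}M$, with its inherited structure, with $[\varGamma^\omega(M)]$, and I would argue by induction on $|\omega|$, peeling off one bounded flag factor at a time: $B_n=\varGamma_1^n(*)$ is built from a point by iterating the twisting $(-)\times_T S^3$, and the canonical circle bundle over $B_n$ restricts over the outermost twist to precisely the bundle used in the operation $\varGamma_j$, so that $\widetilde P\times_{T^k}M$ is obtained from $M$ by applying $\varGamma_1,\dots,\varGamma_k$ with multiplicities $\omega_1,\dots,\omega_k$. The main obstacle is not this combinatorial matching but the bookkeeping of the stably complex structure through it: the structure on $\widetilde P\times_{T^k}M$ descends from the normal data of \eqref{phim}, complementary to $1\times_{T^k}c_\tau$, whereas the structure on $\varGamma^\omega(M)$ is assembled from the unit-sphere embeddings $S^3\subset\bC^2$ of weight $(1,-1)$ inserted at each stage; reconciling them requires care with conjugate line bundles, with the trivial summands permitted by the equivalence relation of Definition~\ref{sttacotkbd}, and with retaining the full $T^k$-action at every stage so that the next $\varGamma_j$ is defined. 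This is the genuine content of the proof, since the bordism class is highly sensitive to the stably complex structure: already when $M=*$, $k=1$, $\omega=1$, the identity $\varPhi(*)=1$ forces $g_1(*)=0$, hence forces $\varGamma_1(*)=\bC P^1$ to carry its \emph{null}-cobordant structure rather than the standard one. Alternatively one can bypass the two steps above and compute directly over $\varPi\?P(q)$: the projection formula and a degree count give $g_\omega(M)=(p_q\circ f_q)_*f_q^*(u_1^{q-\omega_1}\cdots u_k^{q-\omega_k})$ for $q\gg0$, where $f_q\colon\varPi\?S(q)\times_{T^k}M\to\varPi\?P(q)$ is the complex-oriented map of \eqref{phim} and $p_q$ collapses $\varPi\?P(q)$ to a point, so that $g_\omega(M)$ is the class of the transverse preimage in $\varPi\?S(q)\times_{T^k}M$ of $\bC P^{\omega_1}\times\cdots\times\bC P^{\omega_k}$, namely $\bigl(\prod_j S^{2\omega_j+1}\bigr)\times_{T^k}M$ with its induced structure; but one must then still construct a cobordism from this preimage to $\varGamma^\omega(M)$, so the same structural bookkeeping is unavoidable.
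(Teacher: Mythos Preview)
Your approach is essentially the paper's own. In \S\ref{geri} the authors do exactly what you propose: interpret $g_\omega(M)$ as the Kronecker product $\langle\varPhi(M),b_\omega\rangle$, represent $b_\omega$ geometrically by the bounded flag manifolds $B_\omega\stackrel{\psi_\omega}{\longrightarrow}BT^k$ (Proposition~\ref{udual}), and read off the pairing as the pullback of the Borel fibration (Theorem~\ref{Gpirepsgpi}, Corollary~\ref{Gmrepsgm}). The only real difference is in the handling of the stably complex structure, which you flag as the main obstacle and propose to treat by induction on $|\omega|$. The paper avoids this induction entirely: it defines $G_\omega(M)=(S^3)^\omega\times_{T^\omega}M$ directly (Definitions~\ref{Gomegadef}), equips it with the structure coming from the canonical splitting $\tau(G_\omega(M))\cong\tau_F(G_\omega(M))\oplus p^*\tau(B_\omega)$ of \eqref{gpsipistruc} using $c_\tau$ on the fibre and the bounding structure $c^\partial_\omega$ on the base, and then simply observes that $G_\omega(M)$ is $T^k$-equivariantly diffeomorphic to L\"offler's $\varGamma^\omega(M)$. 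So the bookkeeping you anticipate collapses to a single identification, and your worry about the null-cobordant structure on $\varGamma_1(*)=\bC P^1$ is resolved by the fact that $c^\partial_1$ is precisely that bounding structure.
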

We shall rewrite L\"offler's formula in terms of stably tangential
data in \S\ref{geri}, and describe its application to rigidity
phenomena.

There is an elegant alternative expression for $\varPhi(M,c_\tau)$
when the fixed points $x$ are isolated, involving their \emph{weight
vectors} $w_j(x)\in\bZ^k$, for $1\leq j\leq n$. Each such vector
determines a line bundle
\[
  \zeta^{w_j(x)}\;\letbe\;
  \zeta_1^{w_{j,1}(x)}\otimes\dots\otimes\zeta_k^{w_{j,k}(x)}
\]
over $BT^k$, whose first Chern class is a formal power series
\begin{equation}\label{powersys}
[w_j(x)](u)\;\letbe\; \sum_\omega
a_\omega[w_{j,1}(x)](u_1)\,^{\omega_1}\,\cdots\,
[w_{j,k}(x)](u_k)\,^{\omega_k}
\end{equation}
in $\varOmega_U^2(BT^k)$. Here $[m](u_j)$ denotes the power series
$c_1^{M\?U}\!(\zeta_j^m)$ in $\varOmega_U^2(\bC P^\infty)$ for any
integer $m$, and the $a_\omega$ are the coefficients of
$c_1^{M\?U}\!(\zeta_1\otimes\dots\otimes\zeta_k)$. The $[m](u_j)$
form the \emph{power system} or \emph{$m$-series} of the universal
formal group law $F(u_1,u_2)$, and the $a_\omega$ are the
coefficients of its iteration in $\varOmega^U_{2|\omega|}$; see
\cite{b-m-n71}, \cite{bu-no71} for further details. Modulo
decomposables we have that
\begin{equation}\label{wjxmoddec}
[w_j(x)](u_1,\dots,u_k)\;\equiv\;w_{j,1}u_1+\dots +w_{j,k}u_k,
\end{equation}
and it is convenient to rewrite the right hand side as a scalar
product $w_j(x)\cdot u$.

Krichever \cite[(2.7)]{kric74} obtained the following localisation
formula.
\begin{proposition}\label{krich}
  If the structure $c_\tau$ is almost complex and the set of fixed
  points $x\in M$ is finite, then the equation
\begin{equation}\label{kf}
  \varPhi(M,c_\tau)\;=\;\sum_{F\?i\+x(M)}\;
  \prod_{j=1}^n \frac1{[w_j(x)](u)}
\end{equation}
is satisfied in $\varOmega_U^{-2n}(BT^k)$.
\end{proposition}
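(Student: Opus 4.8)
The plan is to prove \eqref{kf} by an Atiyah--Bott style localisation argument, carried out directly in complex cobordism over $BT^k$. Recall from the discussion preceding the statement that $\varPhi(M,c_\tau)=(1\times_{T^k}\pi)_*1$ is the Gysin image of $1$ along the Borel bundle $\pi\colon ET^k\times_{T^k}M\to BT^k$ with closed fibre $M$, and that its target $\varOmega^U_*[[u_1,\dots,u_k]]\cong\varOmega_U^*(BT^k)$ is, since $\varOmega^U_*$ is a polynomial ring, an integral domain. Let $\mathcal K$ denote its localisation at the multiplicative set generated by the series $[w](u)$ with $0\neq w\in\bZ^k$; each such series has nonzero linear part $w\cdot u$, hence is a non-zero-divisor, so $\varOmega_U^*(BT^k)\hookrightarrow\mathcal K$ and every $[w](u)$ is invertible in $\mathcal K$. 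The strategy is to show that inside $\mathcal K$ the element $\varPhi(M,c_\tau)$ is given by the right hand side of \eqref{kf}; since it in fact lies in the subring $\varOmega_U^{-2n}(BT^k)$, this proves the formula there. The hypotheses that $c_\tau$ is almost complex and that the fixed points are isolated are used to guarantee that each sign $\varsigma(x)$ is $+1$, so that only the weights intervene, and to make the following local model available.

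For each $x\in\fix(M)$, fix a $T^k$-invariant closed tubular neighbourhood $D_x\subset M$; using the almost complex structure and an invariant metric, $D_x$ is $T^k$-equivariantly the unit disc of the tangent representation $T_xM\cong\bigoplus_{j=1}^n\bC_{w_j(x)}$, and distinct $D_x$ are disjoint. Borelifying, the constant section $\iota_x\colon BT^k=ET^k\times_{T^k}\{x\}\hookrightarrow ET^k\times_{T^k}M$ is a complex-oriented closed embedding of codimension $2n$, with normal bundle $\nu_x\cong ET^k\times_{T^k}T_xM\cong\bigoplus_{j=1}^n\zeta^{w_j(x)}$ and hence cobordism Euler class $e(\nu_x)=\prod_{j=1}^n c_1^{M\?U}(\zeta^{w_j(x)})=\prod_{j=1}^n[w_j(x)](u)$, by the conventions fixed above. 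I shall use the standard Gysin calculus in complex cobordism \cite{quil71}: $\pi\iota_x=\mathrm{id}$ gives $\pi_*\iota_{x*}=\mathrm{id}$; the self-intersection formula gives $\iota_x^*\iota_{x*}=e(\nu_x)\cdot(-)$; and $\iota_y^*\iota_{x*}=0$ for $x\neq y$, the images of $\iota_x$ and $\iota_y$ being disjoint.

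The crux is the localisation isomorphism $\bigoplus_{x\in\fix(M)}\iota_{x*}\colon\bigoplus_x\mathcal K\;\xrightarrow{\ \cong\ }\;\varOmega_U^*(ET^k\times_{T^k}M)\otimes_{\varOmega_U^*(BT^k)}\mathcal K$. Granting it, write $1=\sum_x\iota_{x*}(c_x)$ with $c_x\in\mathcal K$; applying $\iota_y^*$ and the three properties above yields $1=\iota_y^*(1)=e(\nu_y)\,c_y$, so $c_y=1/e(\nu_y)=\prod_{j=1}^n\frac1{[w_j(y)](u)}$; applying $\pi_*$ and the projection formula then gives $\varPhi(M,c_\tau)=\pi_*(1)=\sum_x c_x=\sum_{x\in\fix(M)}\prod_{j=1}^n\frac1{[w_j(x)](u)}$, which is \eqref{kf}. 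To establish the isomorphism one excises the neighbourhoods $D_x$: the complement $M^\circ=M\setminus\bigsqcup_x\operatorname{int}D_x$ is a compact $T^k$-manifold with empty fixed set, hence admits a finite $T^k$-CW structure all of whose cells have isotropy group a proper subtorus $H<T^k$. Since $\zeta^w|_{BH}$ is trivial whenever the character $w$ vanishes on $H$, the series $[w](u)$ maps to $0$ under $\varOmega_U^*(BT^k)\to\varOmega_U^*(BH)$, so it acts as an invertible $0$ after $-\otimes\mathcal K$; an induction over the cells, using that localisation is exact, then shows $\varOmega_U^*(ET^k\times_{T^k}M^\circ)\otimes\mathcal K=0$. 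A Mayer--Vietoris comparison of $ET^k\times_{T^k}M$ with the disc bundles $ET^k\times_{T^k}D_x$ (which retract onto the sections $\iota_x$) and with $ET^k\times_{T^k}M^\circ$ now gives the claim.

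The main obstacle is the rigorous treatment of this localisation over the infinite-dimensional base $BT^k$, since $\varOmega_U^*(-)$ commutes neither with the filtered colimit $-\otimes\mathcal K$ nor with the inverse limit defining $\varOmega_U^*(BT^k)$. The safe course is to run the whole argument over the finite approximations $\varPi\?S(q)$ of \eqref{prodsphs}: work with the compact bundle $W_q=\varPi\?S(q)\times_{T^k}M$ over $\varPi\?P(q)$ and its honest Gysin map in place of $\pi$, prove the localised identity there, and then pass to the limit $q\to\infty$ using the same $\lim^1$-vanishing invoked in the text. One must also watch orientation and conjugation conventions so that $\nu_x=\bigoplus_j\zeta^{w_j(x)}$ rather than $\bigoplus_j\zeta^{-w_j(x)}$; this is exactly where the almost complex hypothesis (all signs $+1$) is felt, and it matches the form of Krichever's original formula. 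An alternative, essentially equivalent, route is to apply tom Dieck's localisation theorem to the class $\nu[M,c_\tau]\in\MU^{-2n}_{T^k}$ in the genuine equivariant theory, where the fixed-point decomposition is cleaner, and then transport the resulting identity along $\alpha$; the comparison with $\varPhi=\alpha\nu$ still takes place inside $\mathcal K$.
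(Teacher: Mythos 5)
Your proof takes essentially the same localisation route as the paper, whose argument for this statement is deferred to Theorem~\ref{evutgx}, Corollary~\ref{evutg}, and Remark~\ref{genkrich}. There, the formula is derived by setting $z=1$ in Quillen's localisation formula \cite[Proposition~3.8]{quil71}, applied over the finite skeleta $\varPi\?S(q)$ of \eqref{prodsphs} and then passed to the limit; it is proved in the generality of stably tangentially complex $T^k$-manifolds, with the signs $\varsigma(x)$, and Proposition~\ref{krich} is obtained as the special case where $c_\tau$ is almost complex and all signs are $+1$. You instead re-derive the localisation isomorphism $\bigoplus_x\iota_{x*}\colon\bigoplus_x\mathcal K\to\varOmega_U^*(ET^k\times_{T^k}M)\otimes_{\varOmega_U^*(BT^k)}\mathcal K$ from scratch, via excision of fixed-point discs, a finite $T^k$-CW structure on the complement, and Mayer--Vietoris, and you go directly to the almost complex case. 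This is a legitimate alternative route: the paper's appeal to Quillen is slicker and encapsulates the bounded-error issue over the finite approximations, while your version is more self-contained and makes the role of the localised ring $\mathcal K$ explicit. You also correctly identify the two places the almost complex hypothesis is used (no signs, and the normal bundle of $\iota_x$ matching $\bigoplus_j\zeta^{w_j(x)}$ on the nose) and correctly note that the parametrised generality in the paper's proof is not needed for the case $X=*$.

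Two points deserve tightening. First, the isotropy groups of the cells in $M^\circ$ are proper \emph{closed subgroups} of $T^k$, not necessarily subtori, but the argument survives: for any proper closed $H<T^k$ there is a nonzero character $w$ vanishing on $H$, so $[w](u)$ is invertible in $\mathcal K$ yet restricts to zero in $\varOmega_U^*(BH)$, forcing $\varOmega_U^*(BH)\otimes\mathcal K=0$. Second, the Mayer--Vietoris argument is phrased over the infinite-dimensional base, and when pushed down to the finite approximations $\varPi\?P(q)$ the elements $[w](u)$ become zero-divisors, so the localisation statements there only hold with bounded error; this is precisely the delicacy Quillen's Proposition~3.8 packages up, and your passage to the limit should be spelled out slightly more carefully to make the argument watertight. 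Neither issue is a genuine gap in the strategy.
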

All terms of negative degree must cancel in the right-hand side of
\eqref{kf}, imposing strong restrictions on the normal data of the
fixed point set; these are analogues of Novikov's original
Conner-Floyd relations. We shall extend Krichever's formula to stably
complex structures, and discuss applications to non-equivariant
cobordism.

%
%
%
%
%
%
%
%
%

\section{Genera and rigidity}\label{geri}

In this section we consider multiplicative cobordism invariants of
stably complex manifolds, and define equivariant extensions to
\smash{$\varOmega_*^{U:T^k}$} using $\varPhi$. We recover a
generalisation of L\"offler's formula from the tangential
viewpoint, explain its relationship with bounded flag manifolds,
and discuss consequences for a universal concept of rigidity. We
draw inspiration from Hirzebruch's original theory of genera, but
give equal weight to Novikov's interpretation in terms of formal
group laws and Adams's translation into complex-oriented
cohomology theory.


\subsection{Background and examples}
Hirzebruch's \cite{hirz66} studies formal power series over a
commutative ring $R$ with identity. For every $Q(x)\equiv 1$ mod $x$
in $R[[x]]$, he defines a multiplicative homomorphism
$\ell_Q\colon\varOmega_*^U\ra R$, otherwise known as the genus
associated to $Q$. The image of $\ell_Q$ is a graded subring
$R'_*\leq R$, whose precise description constitutes an
\emph{integrality
  theorem} for $\ell_Q$. Hirzebruch also shows that his procedure is
reversible. For every homomorphism $\ell\colon\varOmega^U_*\ra R$ into
a torsion-free ring, he defines a formal power series $Q_\ell(x)\equiv
1$ mod $x$ in $R\otimes\bQ[[x]]$. In this context, the integrality
theorem corresponds to describing the subring $R''\subseteq
R\otimes\bQ$ generated by the coefficients of $Q_\ell$.

The interpretation of genera in terms of formal group laws $F$ was
pioneered by Novikov \cite{novi67}, \cite{novi68}, and is surveyed in
\cite{b-m-n71}. His crucial insight identifies the formal power series
$x/Q(x)$ with the exponential series $f_F(x)$ over $R\otimes\bQ$, and
leads directly to the analytic viewpoint of $f_F(x)$ as a function
over $R\otimes\bC$. The associated genus $\ell_F$ actually classifies
$F$, because $\varOmega_*^U$ may be identified with the Lazard ring,
following Quillen \cite{quil69}. These ideas underlie many subsequent
results of the Moscow school, including generalisations of the
classical elliptic genus by Krichever \cite{kric90} and Buchstaber
\cite{buch90}.

Our third, and least familiar, approach to genera involves complex
oriented spectra $D$ and $E$, and may initially appear more
complicated. Nevertheless, it encodes additional homotopy theoretic
information, as was recognised by developers such as Adams
\cite{adam74} and Boardman \cite{boar67}. Many cobordism related
calculations in western literature have been influenced by their point
of view, including those of \cite{bu-ra98} and \cite{ray72}. We use
the notation of Section \ref{in}, especially Example \ref{hmuexa}.
\begin{definition}\label{defhtpclgenus}
  A \emph{homotopical genus} $(t^D\!,v)$ with \emph{defining spectrum $D$}
  and \emph{evaluating spectrum} $E$ consists of a Thom class $t^D\!$,
  and homotopy commutative diagram
\[
\begin{CD}
  \MU@>t^D>>D\\
  @Vh^{M\!\?U}\!VV@VVvV\\
  H\wedge\MU@>s^E>>E
\end{CD}
\]
of complex orientable spectra, where $D_*$ and $E_*$ are
concentrated in even dimensions and free of additive torsion, and
$v_*\colon D_*\ra E_*$ is monic. The \emph{associated
orientations} are the classes $s^E_*x^H$ and $v_*x^D$ in $E^2(\bC
P^\infty)$, abbreviated to $x^H$ and $x^E$ respectively.
\end{definition}

It follows from Definition \eqref{defhtpclgenus} that $x^E$
extends to the Thom class $t^E\letbe v_*t^D$. For fixed $t^D$, the
\emph{initial} $v$ is $h^D\colon D\ra H\wedge D$, with $s^{H\wedge
D}=1\wedge t^D$, and the \emph{final} $v$ is rationalisation
$r_D\colon D\ra D\bQ$, with $s^{D\bQ}=r_H\wedge t^D$. If $D$ is an
$H$-module spectrum or is rational, then we may take $v$ to be the
identity $1^D$.

The formal group law $F=F_{(t^D,v)}$ associated to a homotopical
genus is classified by the homomorphism $t^D_*\colon\varOmega^U_*\ra
D_*$ of graded rings. Its exponential and logarithm are the formal
power series $b^E(x^H)$ and $m^E(x^E)$ of \eqref{bmseries}, which
link the associated orientations in $E^2(\bC P^\infty)$; then
$F(u_1,u_2)$ is given by $b^E(m^E(u_1)+m^E(u_2))$ over $E_*$. The
bordism classes of the stably complex Milnor hypersurfaces
$H^j(1)\subset\times^{j+1}S^2$ are rational polynomial generators
for $\varOmega^U_*$, as are those of the projective spaces $\bC
P^j$, for $j\geq 0$. So the homomorphism $t^E_*$ is evaluated by
either of the sequences
\begin{equation}\label{hteonhorcp}
  t^E_*(H^j(1))\;=\;(j+1)!\,b^E_j\sors t^E_*(\bC P^j)\;=\;(j+1)\,m^E_j
\end{equation}
in $E_*$, following \cite[(2.11)]{ray87} and
\cite[Appendix~1]{novi67} respectively. Since $v_*$ is monic,
$t^D_*$ may be recovered from \eqref{hteonhorcp}.

Not all Hirzebruch genera are homotopical, but the following procedure
usually creates a workable link. For any ring homomorphism
$\ell\colon\varOmega_*^U\ra R$, first identify a complex oriented
spectrum $D$ such that some natural homomorphism $D_*\ra R$
approximates the image of $\ell$, and the corresponding Thom class
$t^D\colon\MU\ra D$ induces $\ell$ on homotopy rings. Then identify a
map $v\colon D\ra E$ of complex oriented ring spectra such that
$E_*\leq D_*\otimes\bQ$ approximates the graded subring generated by
the coefficients of the exponential series $f_\ell(x)$, and
$t^E=v\cdot t^D$ factors through $h^{M\?U}$. Should $D_*$ already be
isomorphic to an appropriately graded version of $R$, we indicate the
forgetful homomorphism by $R_*\ra R$.

The existence of $E$ amounts to a homotopical integrality
theorem, and $f_\ell(x^H)$ is synonymous with $b^E(x^H)$ in
$E_*[[x^H]]$. The initial example $(t^D,h^D)$ sheds light on the
integrality properties of arbitrary Hirzebruch genera, homotopical
or not, by suggesting the option of linearising the formal group law
$F_\ell$ over the extension
\begin{equation}\label{niceext}
R\;\leq\;R\otimes_{\varOmega_*^U}H_*(\MU)\;\leq\;R\otimes\bQ\,.
\end{equation}
An alternative implementation of \eqref{niceext} is to work
exclusively with the ring of \emph{Hurwitz series} over $R$
\cite{car49}, following the lead of umbral calculus \cite{ray87}. From
the analytic and formal group theoretic viewpoints, $F_\ell$ is
usually linearised over $R\otimes\bQ$, and homotopy theoretic
information is then lost.

We emphasise that the art of calculation with homotopical genera may
still rely upon the Hirzebruch's original methods, as well as
Krichever's analytic approach. By way of illustration, we rederive
Hirzebruch's famous formula \eqref{ht2eq} for computing $t^E_*$ on an
arbitrary stably complex manifold $(M^{2n},c_\tau)$. Sometimes, it
proves more convenient to work with the complementary normal structure
$c_\nu$, which arises naturally from the Pontryagin-Thom
construction. The virtual bundles $\tau$ and $\nu$ are related by
$\perp^*\!\nu=\tau$, where $\perp\colon\BU\ra\BU$ is the involution
that classifies the complement of the universal virtual bundle. It
induces an automorphism $\perp^*$ of $H\wedge\MU$, which is specified
uniquely by its action on $(H\wedge\MU)^2(\bC P^\infty)$; we write
$\perp^*x^{MU}$ as $a(x^H)\letbe x^\perp$, and note that it is
determined by either of the identities
\begin{equation}\label{xperp}
a(x^H)\;=\;(x^H)^2/b(x^H)\sporsp a_+(x^H)\;=\;1/b_+(x^H).
\end{equation}
So $x^\perp$ is also a complex orientation, with $a_1=-b_1$ and
$a_2=b_1^2-b_2$, for example.

The complex orientability of $M$ ensures the existence of
fundamental classes $\sigma^H_M$ and $\sigma^E_M$ in $E_{2n}(M)$.
The coefficients $b_j\letbe b^E_j$ of $b^E(x^H)$ generate monomials
$b^\omega\letbe b_1^{\omega_1}\dots b_n^{\omega_n}$ in
$E_{2|\omega|}$, which may not be independent. On the other hand,
the orientation class $x^H$ defines basis elements $c^H_\omega$ in
$E^{2|\omega|}(BU)$, of which $c^H_j$ restricts to the
$j\,$th Chern class in $H^{2j}(BU)$, for every $j\geq 0$.
\begin{proposition}\label{ht1form}
The homomorphism $t^E_*$ satisfies
\begin{equation}\label{ht1eq}
t^E_*(M,c_\nu)\;=\;\sum_\omega\big\langle b^\omega
c^H_\omega(\nu)\,,\, \sigma^H_M\big\rangle
\end{equation}
in $E_{2n}$, for any stably complex manifold $(M,c_\nu)$.
\end{proposition}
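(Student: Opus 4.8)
The plan is to compute $t^E_*(M,c_\nu)$ directly from the definition of $t^E$ as a Thom class, by applying the universal Thom isomorphism and pairing against the fundamental class. First I would recall that, by construction, $t^E_* \colon \varOmega^U_* \to E_*$ is the map on homotopy rings induced by the Thom class $t^E \colon \MU \to E$, and that for a stably complex manifold $(M,c_\nu)$ the class $[M,c_\nu] \in \varOmega^U_{2n}$ is the Pontryagin-Thom image of the normal structure $c_\nu$. So $t^E_*(M,c_\nu) = \langle t^E, \text{PT}(M,c_\nu)\rangle$, evaluated in $E_{2n}$.

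Next I would unwind this pairing using the standard translation between cobordism Thom classes and characteristic-number formulae. The Thom class $t^E$ factors through $h^{M\?U}$ by Definition \ref{defhtpclgenus}, so $t^E = v \cdot t^D$ and it suffices to track the image of $t^E$ in $E^*(\MU)$ under the Thom isomorphism $E^*(\MU) \cong E^*(BU)$; this image is exactly the generating function $\sum_\omega b^\omega c^H_\omega$, because $b_j = b^E_j$ is by definition $b_j^{H\wedge M\?U}$ (Example \ref{hmuexa}), the $c^H_\omega$ are the dual basis in $E^*(BU)$ defined by the orientation $x^H$, and the coefficients $b^E_j$ record precisely how $t^E$ differs from $t^H$ — i.e. the relation $t_2(\zeta) = t_1(\zeta)\cdot b^E_+(x_1)$ of the introduction, extended multiplicatively to $BU$ via the splitting principle. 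Then the evaluation of $t^E_*$ on $[M,c_\nu]$ becomes the characteristic number obtained by restricting $\sum_\omega b^\omega c^H_\omega$ along the classifying map of $\nu$ and pairing with the $H$-theory fundamental class $\sigma^H_M$, which is exactly \eqref{ht1eq}.

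The two ingredients that require care are, first, the identification of the Thom-isomorphism image of $t^E$ with $\sum_\omega b^\omega c^H_\omega(\nu)$ — this is the essentially formal but delicate point, and amounts to observing that under the Thom iso $E^*(\MU)\cong E_*[[c^H_1,c^H_2,\dots]]$ the class $t^E$ corresponds to the universal multiplicative series with coefficients $b^E_\omega$, which is the content of \cite{adam74} and of the doubly-oriented calculus of \cite{bu-ra98} — and second, the fact that one may pair against $\sigma^H_M$ rather than $\sigma^E_M$, which holds because the $c^H_\omega$ are pulled back from $H^*(BU)$-integral classes and $h^{M\?U}_*$ carries $\sigma^E_M$ appropriately (equivalently, the relevant characteristic numbers are $H$-theory Chern numbers, and $t^E_*$ depends only on those via \eqref{hteonhorcp}).

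I expect the main obstacle to be bookkeeping: making the splitting-principle reduction from line bundles to general virtual bundles precise, and checking that the formal power series $b^E_+(x^H)$ governing a single line bundle assembles correctly into the class $\sum_\omega b^\omega c^H_\omega$ on $BU$ without sign or indexing errors. Once that multiplicative identity is in hand, \eqref{ht1eq} follows by pulling back along $\nu\colon M\to BU$ and capping with $\sigma^H_M$, and the Todd-genus-style special cases drop out by specialising $E$ and $b^E$. The complementary tangential formula \eqref{ht2eq} will then follow by applying the involution $\perp^*$ and the identities \eqref{xperp}.
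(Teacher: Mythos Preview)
Your approach is sound, but the paper's proof is considerably shorter and takes a different route. Rather than unwinding the Thom isomorphism and Pontryagin--Thom construction directly for an arbitrary $E$, the paper first observes that the universal case $(1^{M\?U},h^{M\?U})$ is essentially a tautology: in $(H\wedge\MU)_{2n}$ the monomials $b^\omega$ and the classes $c^H_\omega$ are \emph{dual bases} by construction, so the Hurewicz image $h^{M\?U}_*(M,c_\nu)$ is automatically $\sum_\omega\langle b^\omega c^H_\omega(\nu),\sigma^H_M\rangle$. The general formula then follows in one line by applying $s^E_*$, since $t^E$ factors as $s^E\cdot h^{M\?U}$ by the defining diagram. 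Your direct computation via the Thom isomorphism and the multiplicative series $b^E_+$ recovers the same identity, but it forces you to manage the splitting-principle bookkeeping and the choice of which Thom class defines the isomorphism $E^*(\MU)\cong E^*(\BU_+)$---precisely the delicate points you flag. The paper's reduction to the universal case sidesteps all of that: duality of $b^\omega$ and $c^H_\omega$ does the work, and naturality under $s^E_*$ transports it. Your route has the minor advantage of making the r\^ole of the Thom class explicit at each stage, which may be helpful pedagogically, but for the purposes of establishing \eqref{ht1eq} it is more labour than necessary.
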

\begin{proof}
The case $(1^{M\?U},h^{M\?U})$ is universal. In $(H\wedge\MU)_{2n}$,
the formula
\[
  h^{M\?U}_*(M,c_\nu)\;=\;\sum_\omega\big\langle b^\omega
  c^H_\omega(\nu)\,,\sigma^H_M\big\rangle
\]
holds because $b^\omega$ and $c^H_\omega$ define dual bases; so
\eqref{ht1eq} follows by applying $s^E_*$.
\end{proof}
\begin{corollary}\label{hurhirz}
The value of $t^E_*(M,c_\nu)$ may be rewritten as
\begin{equation}\label{ht2eq}
\bigg\langle\prod_i b_+(x^H_i)(\nu)\,, \sigma^H_M\bigg\rangle\;=\;
\bigg\langle\prod_i a_+(x^H_i)(\tau)\,, \sigma^H_M\bigg\rangle.
\end{equation}
\end{corollary}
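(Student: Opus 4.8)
The plan is to deduce \eqref{ht2eq} from Proposition~\ref{ht1form} by recognising the sum over $\omega$ in \eqref{ht1eq} as the value of a multiplicative characteristic class, and then passing from the normal bundle to the tangent bundle via the duality \eqref{xperp}.

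For the first equality in \eqref{ht2eq} I would rewrite the right-hand side of \eqref{ht1eq}. As in the proof of Proposition~\ref{ht1form} it suffices to treat the universal case $(1^{\MU},h^{\MU})$ of Example~\ref{hmuexa} and then apply $s^E_*$. There the classes $c^H_\omega$ were introduced precisely as the basis of $(H\wedge\MU)^*(BU)$ dual to the monomial basis $b^\omega$; by the splitting principle, this duality is equivalent to Hirzebruch's characterisation of multiplicative sequences, namely the generating-function identity
\[
  \prod_i b_+(x^H_i)\;=\;\prod_i\Bigl(\,\textstyle\sum_{j\geq 0}b_j\,(x^H_i)^j\Bigr)\;=\;\sum_\omega b^\omega\,c^H_\omega
\]
in $(H\wedge\MU)^*(BU)$, the coefficient of $b^\omega$ in the product being exactly the Chern-monomial symmetric function represented by $c^H_\omega$. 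Applying $s^E_*$ carries this over to $E^*(BU)$ for an arbitrary evaluating spectrum $E$; substituting the image of $\nu$ and pairing with $\sigma^H_M$ turns \eqref{ht1eq} into the left-hand side of \eqref{ht2eq}. The infinite product is harmless, since after restriction along the classifying map of $\nu$ into $BU$ all contributions of cohomological degree exceeding $2n$ vanish, so only finitely many monomials survive.

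For the second equality I would use that $\prod_i b_+(x^H_i)$ is the universal value of a multiplicative characteristic class $\kappa_{b_+}$: it is grouplike for the Whitney sum, hence extends to virtual bundles with $\kappa_{b_+}(\varepsilon)=1$ and $\kappa_{b_+}(-\xi)=\kappa_{b_+}(\xi)^{-1}$. Since the stable normal and tangent bundles satisfy $\perp^*\nu=\tau$, equivalently $\nu=-\tau$, we obtain
\[
  \kappa_{b_+}(\nu)\;=\;\kappa_{b_+}(-\tau)\;=\;\kappa_{b_+}(\tau)^{-1}\;=\;\Bigl(\prod_i b_+(x^H_i)(\tau)\Bigr)^{-1}\;=\;\prod_i a_+(x^H_i)(\tau),
\]
where the final step is the identity $a_+(x^H)=1/b_+(x^H)$ of \eqref{xperp}. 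Evaluating both ends against $\sigma^H_M$ gives the right-hand equality of \eqref{ht2eq}. Equivalently, one may observe that $\perp^*$ inverts grouplike elements of $E^*(BU)^\wedge$, so $\perp^*\bigl(\prod_i a_+(x^H_i)\bigr)=\prod_i b_+(x^H_i)$, and then push $\perp^*$ through the classifying maps using $\tau=\perp^*\nu$.

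The hardest part will be the bookkeeping in the first step: one must check carefully that the basis $\{c^H_\omega\}$ defined ``by the orientation $x^H$'' really is the one dual to $\{b^\omega\}$, hence the one occurring as coefficients of the $b_+$-expansion, and that the multi-index convention for $\omega$ is handled consistently throughout. This is the classical theory of Hirzebruch multiplicative sequences transported along $s^E$; once it is in place, both equalities in \eqref{ht2eq} are formal consequences of the multiplicativity of $\kappa_{b_+}$ together with \eqref{xperp}, the only remaining point being the truncation argument that legitimises evaluating the infinite products over the finite-dimensional manifold $M$.
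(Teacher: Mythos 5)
Your argument is correct and takes essentially the same route as the paper: the first equality is exactly the symmetric-function identity $\prod_i b_+(x^H_i)=\sum_\omega b^\omega c^H_\omega$ fed into Proposition~\ref{ht1form}, and the second equality is the substitution $\nu\mapsto\tau$ via the complementarity $\tau\oplus\nu\cong\text{trivial}$ together with $a_+(x^H)=1/b_+(x^H)$ from \eqref{xperp}. The paper compresses this into two lines; you merely unpack the same two steps, making the ``multiplicative sequence'' and grouplikeness of $\kappa_{b_+}$ explicit, which is a reasonable amount of added detail but not a different proof.
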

\begin{proof}
These formulae are to be interpreted by writing $c^H_\omega$ as the
appropriate symmetric function in the variables $x^H_1$, $x^H_2$,
\dots. So
\[
\prod_i\big(1+b_1x^H_i+b_2(x^H_i)^2+\dots\big)\;=\;\sum_\omega
b^\omega c^H_\omega,
\]
and \eqref{ht2eq} follows immediately from \eqref{ht1eq} and
\eqref{xperp}.
\end{proof}

We now describe six homotopical genera $(t^D,v)$, and evaluate
each $t^E_*$ explicitly; (1), (2), (3) and (6) are well-known, but
(4) and (5) may be less familiar.

\begin{examples}\label{cgenexs}\hfill

(1) The \emph{augmentation genus} $ag$ and \emph{universal
genus} $ug$ are the extreme cases, given by $(t^H,1^H)$ and
$(1^{M\?U},h^{M\?U})$ respectively. Thus $ag(M)=0$ and $ug(M)=[M]$
for any stably complex $M$ of dimension $>0$.

(2) The \emph{Hurewicz genus} $hr$ of Example \ref{hmuexa} is given
by $(h^{M\?U},1^{H\wedge M\?U})$, where
$(H\wedge\MU)_*\cong\bZ[b_1,b_2,\dots]$. Then
\[
x^{M\?U}\;=\;b(x^H)\sands x^H\;=\;m(x^{M\?U})
\]
in $(H\wedge\MU)^2(\bC P^\infty)$, so $hr(H^j(1))=(j+1)!\,b_j$ and
$hr(\bC P^j)=(j+1)m_j$ for $j\geq 0$.

(3) The \emph{Todd genus} $td$ of \cite{co-fl66} is given by
$(t^K,h^K)$ , where $K_*\cong\bZ[z,z^{-1}]$, and $(H\wedge
K)_*\cong\bQ[z,z^{-1}]$. Here
\[
x^K\;=\;(\exp zx^H-1)/z\sands x^H\;=\;(\ln(1+zx^K))/z
\]
in $(H\wedge K)^2(\bC P^\infty)$, so $td(H^j(1))=z^j$ and $td(\bC
P^j)=(-z)^j$.

(4) The \emph {$c_n$ genus} $cg$ is given by $(t^C,1^C)$, where $C$
denotes $H\wedge X(2)$ for the Thom spectrum $X(2)$ of \cite[\S
9.1]{rave92} over $\varOmega S^3$; thus $C_*\cong\bZ[v]$. Here
\[
x^C\;=\;x^H/(1+vx^H)\sands x^H\;=\;x^C/(1-vx^C)
\]
in $C^2(\bC P^\infty)$, so $cg(H^j(1))=(j+1)!\,(-v)^j$ and
$cg(\bC P^j)=(j+1)v^j$.

(5) The \emph{Abel genus} $ab$ and \emph{2-parameter Todd genus}
\emph{t2} are given by $(t^A,1^L)$ and $(t^{K\?2},1^L)$ respectively,
where $L$ denotes $H\wedge K\wedge K$; thus $L_*\cong\bQ[y^{\pm
  1},z^{\pm 1}]$. Here
\[
x^A\!=(\exp yx^H\!\!-\exp zx^H)/(y-z)\quad\text{and}\quad
x^{K\?2}\!=(y-z)x^A/(y\exp zx^H\!\!-z\exp yx^H)
\]
in $L^2(\bC P^\infty)$, so $ab(H^j(1))=\sum_{i=0}^{j}y^iz^{j-i}$
and \emph{t2}$(H^j(1))=\sum_\rho y^{r(\rho)}z^{f(\rho)}$, where
each permutation $\rho\in S_{j+1}$ has $r(\rho)$ rises and
$f(\rho)$ falls \cite[\S2.4.21]{go-ja86}; similarly,
\emph{t2}$(\bC P^j)=\sum_{i=0}^j(-1)^jy^iz^{j-i}$. If $y\?=\?0$
then $ab$, $\mathit{t2}$, and $td$ agree, and if $y\?=\?z\?=\?-v$
then $\mathit{t2}$ is $cg$; if $y\?=\?z$ then $x^A\?=\?x^H\exp
yx^H$. The Abel genus appears in \cite{bu-kh90}.

(6) The \emph{signature} $sg$ is given by $(t^S,h^{K\?O[1/2]})$, where
$t^S\colon\MU\ra\KO[1/2]$ is the Thom class of \cite{hirz66},
$\KO[1/2]_*\cong\bZ[1/2][z^{\pm2}]$, and
$(H\wedge\KO[1/2])_*\cong\bQ[z^{\pm 2}]$. Then
\[
x^S\;=\;(\tanh zx^H)/z\sands x^H=(\tanh^{-1}zx^H)/z
\]
in $(H\wedge\KO[1/2])^2(\bC P^\infty)$, so $sg(\bC P^{2j})=z^{2j}$.
In fact $sg$ is $t2$ in case $y=-z$.
\end{examples}
\begin{remarks}\label{normtotan}\hfill\\
  \indent (1) Alternative formulae arise by composing with $\perp^*$
  of \eqref{xperp}, as in Corollary \ref{hurhirz}. The universal such
  example is the \emph{tangential genus} $tg$, induced by the Thom class
  $t^\perp\colon\MU\ra H\wedge\smash\MU$ and determined by
  $tg(H^j(1))=(j+1)!\,a_j$.

  (2) The $c_n$ genus and signature are given globally by
  $cg(M^{2n},c_\tau)\?=\?\langle c^H_n(\tau),\sigma^H_M\rangle v^n$ and
  $sg(M^{2n},c_\tau)=sg(M)z^n$. If $c_\tau$ is almost complex,
  then $cg$ takes the value $\chi(M)v^n$, and is independent of the
  structure; $sg$ is independent of $c_\tau$ in all cases.

  (3) The genus $ab$ may also be described homotopically as
  $(t^{Ab},h^{Ab})$, where $Ab$ denotes the complex oriented theory
  constructed in~\cite{busa02}.
\end{remarks}


\subsection{Equivariant extensions}
Every genus $\ell\colon\varOmega_*^U\ra R_*$ has a
\emph{$T^k$-equivariant extension}
\begin{equation}\label{equivtgenus}
\ell^{\,T^k}\colon\varOmega_*^{U:T^k}\lra R_*[[u_1,\dots,u_k]],
\end{equation}
defined as the composition $\ell\cdot\varPhi$. In this
context, \eqref{defgomega} yields the expression
\begin{equation}\label{ltkexpress}
\ell^{\,T^k}(M,c_\tau)\;=\;\ell(M)\;+
\sum_{|\omega|>0}\ell(g_\omega(M))\,u^\omega.
\end{equation}
In particular, the $T^k$-equivariant extension of the universal
Examples \ref{cgenexs}(1) is $\varPhi$; hence the name \emph{universal
toric genus}.

More generally, we consider elements $\pi$ of
$\varOmega^{-2n}_{U:T^k}(X_+)$, but restrict attention to those $X$
for which $\varOmega^*_U(ET^k\times_{T^k}X)$ is a finitely generated
free $\varOmega^*_U(BT^k)$-module. Then there exist generators
$v_0$, $v_1$, \dots $v_p$ in $\varOmega^*_U(ET^k\times_{T^k}X)$,
where $v_0=1$ and $\dim v_j=2d_j$, which restrict to a basis for
$\varOmega_U^*(X)$ over $\varOmega_*^U$. The elements $v_ju^\omega$
form an $\varOmega_*^U$-basis for
$\varOmega^*_U(ET^k\times_{T^k}X)$; we write their duals as
$a_{j,\omega}$, and choose singular stably complex manifolds
$f_{j,\omega}\colon A_{j,\omega}\ra ET^k\times_{T^k}X$ as their
representatives. We may then generalise \eqref{defgomega} to
\begin{equation}\label{defgpsi}
\varPhi(\pi)\;=\;\sum_{j,\omega}g_{j,\omega}(\pi)\,v_ju^\omega
\end{equation}
in $\varOmega^{-2n}_U((ET^k\times_{T^k}X)_+)$, where the
coefficients $g_{j,\omega}(\pi)$ lie in
$\varOmega^U_{2(n+d_j+|\omega|)}$. Moreover,
$\sum_jg_{j,0}(\pi)\,v_j$ represents the non-equivariant cobordism
class of the complex oriented map $\pi$ in $\varOmega_U^{-2n}(X_+)$.

The extension
\begin{equation}\label{equivtgenusx}
\ell^{\,T^k}(\pi)\colon\varOmega_*^{U:T^k}(X_+)\llra
R_*[[u_1,\dots,u_k]]\langle v_0,v_1,\dots,v_p\rangle
\end{equation}
is the composition $\ell\cdot\varPhi(\pi)$, and is evaluated by
applying $\ell$ to each $g_{j,\omega}(\pi)$. Of course,
\eqref{equivtgenusx} reduces to \eqref{equivtgenus} when $X=*$ and
$p=0$. Alternatively, we may focus on homotopical genera $t^E_*$,
and define the extension $(t^E_X)_*$ as the composition
$t^E_*\cdot\varPhi(\pi)\colon\varOmega^*_{U:T^k}(X_+)\ra
E^*((ET^k\times_{T^k}X)_+)$ for \emph{any} compact $T^k$-manifold
$X$.

As an aid to evaluating equivariant extensions, we identify the
$g_{j,\omega}(\pi)$ of \eqref{defgpsi}.
\begin{theorem}\label{Gpirepsgpi}
For any smooth compact $T^k$-manifold $X$ as above, the complex
cobordism class $g_{j,\omega}(\pi)$ is represented on the
$2(n+d_j+|\omega|)$-dimensional total space of a fibre bundle $F\ra
G_{j,\omega}(\pi)\ra A_{j,\omega}$, whose stably complex structure
is induced by those on $\tau(A_{j,\omega})$ and $\tau_F(E)$; in
particular, $g_{0,0}(\pi)=[F]$ in $\varOmega^U_{2n}$.
\end{theorem}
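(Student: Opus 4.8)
The plan is to transport the Quillen-style geometric description of $\varPhi(\pi)$, already established around \eqref{phitan} and \eqref{phinorm}, along the classifying maps $f_{j,\omega}$ of the dual basis. Recall that $\varPhi(\pi)=(1\times_{T^k}\pi)_*1$ in $\varOmega^{-2n}_U((ET^k\times_{T^k}X)_+)$, where the Borelification $1\times_{T^k}\pi\colon ET^k\times_{T^k}E\ra ET^k\times_{T^k}X$ carries the complex orientation \eqref{phinorm} determined by $1\times_{T^k}c_\tau(\pi)$, and its bundle of tangents along the fibre is $1\times_{T^k}\tau_F(E)$. Since $\{v_ju^\omega\}$ and $\{a_{j,\omega}\}$ are dual bases under the Kronecker pairing between $\varOmega^*_U((ET^k\times_{T^k}X)_+)$ and $\varOmega_*^U((ET^k\times_{T^k}X)_+)$, the coefficient $g_{j,\omega}(\pi)$ of $v_ju^\omega$ in \eqref{defgpsi} is exactly $\langle\varPhi(\pi),a_{j,\omega}\rangle$; writing $a_{j,\omega}=(f_{j,\omega})_*[A_{j,\omega}]$ and using the adjunction $\langle y,(f_{j,\omega})_*z\rangle=\langle f_{j,\omega}^*y,z\rangle$, we obtain
\[
  g_{j,\omega}(\pi)\;=\;\langle f_{j,\omega}^*\varPhi(\pi),[A_{j,\omega}]\rangle\;=\;(p_{A_{j,\omega}})_*\,f_{j,\omega}^*\varPhi(\pi),
\]
where $p_{A_{j,\omega}}\colon A_{j,\omega}\ra *$ and the Gysin map uses the stably complex structure of the manifold $A_{j,\omega}$.

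The heart of the proof is a base-change step. Form the pullback
\[
  G_{j,\omega}(\pi)\;\letbe\;A_{j,\omega}\times_{ET^k\times_{T^k}X}(ET^k\times_{T^k}E),
\]
a smooth fibre bundle $\pi'\colon G_{j,\omega}(\pi)\ra A_{j,\omega}$ with fibre $F$. Because $1\times_{T^k}\pi$ is a fibre bundle, hence a submersion, the defining square is transverse, and Quillen's base-change formula for complex oriented maps \cite[\S1]{quil71} supplies both the identity $f_{j,\omega}^*(1\times_{T^k}\pi)_*1=(\pi')_*1$ and the fact that the complex orientation of $\pi'$ is the pullback of that of $1\times_{T^k}\pi$. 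Composing Gysin homomorphisms,
\[
  g_{j,\omega}(\pi)\;=\;(p_{A_{j,\omega}})_*(\pi')_*1\;=\;(p_{G_{j,\omega}(\pi)})_*1\;=\;[G_{j,\omega}(\pi)],
\]
the complex cobordism class of the total space. Its stably complex structure is read off from the stable splitting $\tau(G_{j,\omega}(\pi))\cong\tau_F(\pi')\oplus(\pi')^*\tau(A_{j,\omega})$: the first summand is the pullback of $1\times_{T^k}\tau_F(E)$ with its structure $1\times_{T^k}c_\tau(\pi)$, so is induced by $c_\tau(\pi)$, while the second carries the structure of the stably complex manifold $A_{j,\omega}$ -- exactly the orientation the base-change formula produces. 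The dimension count is immediate: $a_{j,\omega}$ is dual to the degree-$2(d_j+|\omega|)$ class $v_ju^\omega$, so $\dim A_{j,\omega}=2(d_j+|\omega|)$ and $\dim G_{j,\omega}(\pi)=\dim A_{j,\omega}+\dim F=2(n+d_j+|\omega|)$. When $j=\omega=0$ we have $v_0u^0=1$, whose Kronecker dual $a_{0,0}$ is represented by a single point of $ET^k\times_{T^k}X$; the bundle then pulls back to the fibre over that point, so $G_{0,0}(\pi)=F$ and $g_{0,0}(\pi)=[F]$. Taking $X=*$, $p=0$, $j=0$ recovers the promised tangential form of L\"offler's formula (cf. Proposition~\ref{loffform}).

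Two points require care, and the second is the genuine obstacle. The minor one is that $ET^k\times_{T^k}X$ is infinite-dimensional, so all of the above must be carried out over the compact approximations $\varPi\?S(q)\times_{T^k}X$ of \eqref{phim} and then passed to the inverse limit, with the same routine $\lim^1$ argument used for $\varPhi$ itself; since the bundles and the dual-basis representatives are pulled back from finite stages, nothing essentially new occurs. The main obstacle is justifying the base-change identity $f_{j,\omega}^*(1\times_{T^k}\pi)_*1=(\pi')_*1$ together with the transport of the complex orientation: one has to verify that the Pontryagin-Thom construction defining the Gysin map (as in the proof of Proposition~\ref{cttcn}) is natural under restriction along $f_{j,\omega}$ -- concretely, that an equivariant embedding of $E$ and its Borelification restrict compatibly over $A_{j,\omega}$ to yield the complex orientation of $\pi'$. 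This is Quillen's argument \cite[\S1]{quil71} carried out parametrically, and it is where the bulk of the routine but fiddly checking lies.
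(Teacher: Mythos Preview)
Your proof is correct and follows essentially the same route as the paper: identify $g_{j,\omega}(\pi)$ as the Kronecker product $\langle\varPhi(\pi),a_{j,\omega}\rangle$, represent it on the pullback of $1\times_{T^k}\pi$ along $f_{j,\omega}$ (first over the finite approximations $\varPi S(q)\times_{T^k}X$, then in the colimit), and read off the stably complex structure from the splitting $\tau(G_{j,\omega}(\pi))\cong\tau_F(G_{j,\omega}(\pi))\oplus p^*\tau(A_{j,\omega})$. Your explicit invocation of Quillen's base-change formula and the adjunction step make explicit what the paper treats as standard, but the argument is the same.
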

\begin{proof}
Formula \eqref{defgpsi} identifies $g_{j,\omega}(\pi)$ as the
Kronecker
  product $\langle\varPhi(\pi)\,,a_{j,\omega}\rangle$. In terms of
  \eqref{phim}, it is represented on the pullback of the diagram
\[
  A_{j,\omega}\stackrel{f_{j,\omega}}{\lllra}\varPi\?S(q)\times_{T^k}X
\stackrel{1\times_{\?T^{\?k}}\pi}{\lllla}\varPi\?S(q)\times_{T^k}E
\]
for suitably large $q$, and therefore on the pullback of the diagram
\[
A_{j,\omega}\stackrel{f_{j,\omega}}{\lllra}ET^k\times_{T^k}X
\stackrel{1\times_{\?T^{\?k}}\pi}{\lllla}ET^k\times_{T^k}E.
\]
of colimits. So the representing manifold is the total space of the
pullback bundle $F\ra
G_{j,\omega}(\pi)\stackrel{p}{\ra}A_{j,\omega}$. Moreover, both
$\tau_F(E)$ and $\tau(A_{j,\omega})$ have stably complex structures,
so the structure on the pullback is induced by the canonical
isomorphism
\begin{equation}\label{gpsipistruc}
\tau(G_{j,\omega}(\pi))\;\cong\;\tau_F(G_{j,\omega}(\pi))\oplus
p^*\tau(A_{j,\omega})
\end{equation}
of real vector bundles.
\end{proof}

In order to work with the case $X=*$, we must choose geometric
representatives for the basis elements $b_\omega\letbe
b_{\omega_1}\otimes\dots\otimes b_{\omega_k}$ of
$\varOmega^U_*(BT^k)$, dual to the monomials $u^\omega$ in
$\varOmega^*_U(BT^k)$. For this purpose, we recall the \emph{bounded
flag manifolds} $B_j$.

The subspace $(S^3)^j\subset\bC^{2j}$ consists of all vectors
satisfying $|z_i|^2+|z_{i+j}|^2=1$ for $1\le i\le j$, and is acted
on freely by $T^j$ according to
\begin{equation}\label{tjbflag}
  t\cdot(z_1,\ldots,z_{2j})\;=\;
  (t_1^{-1}z_1,t_1^{-1}t_2^{-1}z_2,\ldots,t_{j-1}^{-1}t_j^{-1}z_j,
  t_1z_{j+1},\ldots,t_jz_{2j})
\end{equation}
for all $t=(t_1,\dots,t_j)$. The quotient manifold
$B_j\letbe(S^3)^j/T^j$ is a $j$-fold iterated $2$-sphere bundle over
$B_0=*$, and for $1\le i\le j$ admits complex line bundles
\[
  \psi_i\colon (S^3)^j \times_{T^j}\mathbb{C}\lra B_j
\]
via the action $t\cdot z = t_iz$ for $z\in\mathbb{C}$. The $B_j$
are called bounded flag manifolds in \cite{bu-ra98}, because their
points may be represented by certain complete flags in
$\bC^{j+1}$.

For any $j>0$, \eqref{tjbflag} determines an explicit isomorphism
\[
  \tau(B_j)\oplus\mathbb{C}^j
  \;\cong\;\psi_1\oplus\psi_1\psi_2\oplus\dots\oplus\psi_{j-1}\psi_j
  \oplus\bar{\psi}_1\oplus\dots\oplus\bar{\psi}_j
\]
of real $4j$-plane bundles, which defines a stably complex
structure $c^\partial_j$ on $B_j$. This extends over the
associated $3$-disc bundle, and so ensures that
$[B_j,c^\partial_j]=0$ in $\varOmega_{2j}^U$. The $B_j$ may also
be interpreted as \emph{Bott towers} in the sense of
\cite{gr-ka94}, and therefore as complex algebraic varieties;
however, the corresponding stably complex structures have nonzero
Chern numbers, and are inequivalent to $c^\partial_j$. We write
the cartesian product $B_{\omega_1}\times\dots\times B_{\omega_k}$
as $B_\omega$, with the bounding stably complex structure
\smash{$c^\partial_\omega$}.

\begin{proposition}\label{udual}
The basis element $b_\omega\in\varOmega^U_{2|\omega|}(BT^k)$ is
represented geometrically by the classifying map
\[
  \psi_\omega\colon B_\omega\longrightarrow BT^k
\]
for the external product
$\psi_{\omega_1}\!\times\dots\times\psi_{\omega_k}$ of circle
bundles.
\end{proposition}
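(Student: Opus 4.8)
The plan is to verify the characterising property of $b_\omega$ directly, namely that $\langle u^{\omega'},(\psi_\omega)_*[B_\omega,c^\partial_\omega]\rangle=\delta_{\omega\omega'}$ in $\varOmega^U_*$ for every multi-index $\omega'$; since $\varOmega^U_*(BT^k)$ is free over $\varOmega^U_*$ on the $b_{\omega'}$, this identifies $(\psi_\omega)_*[B_\omega,c^\partial_\omega]$ with $b_\omega$. First I would rewrite this Kronecker product as $\langle\psi_\omega^*(u^{\omega'}),[B_\omega,c^\partial_\omega]\rangle$. By the conventions of Section~\ref{in} the universal line bundle over the $i$th factor of $BT^k=(\bC P^\infty)^k$ whose first $M\?U$-Chern class is $u_i$ is the conjugate Hopf bundle $\zeta_i$, which is modelled by the diagonal Borel construction $ET\times_T\bC$; since $\psi_\omega$ classifies the external product $\psi_{\omega_1}\times\dots\times\psi_{\omega_k}$, it pulls $\zeta_i$ back to the bundle $\psi_{\omega_i}$ on $B_\omega$, so $\psi_\omega^*u_i=c_1^{M\?U}(\psi_{\omega_i})$ with no inversion. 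Thus the Kronecker product equals $\langle\prod_i(c_1^{M\?U}\psi_{\omega_i})^{\omega'_i},[B_{\omega_1},c^\partial_{\omega_1}]\times\dots\times[B_{\omega_k},c^\partial_{\omega_k}]\rangle$; because $c^\partial_\omega$ is the product structure and the Kronecker pairing is multiplicative on external products, this factorises as $\prod_i\langle(c_1^{M\?U}\psi_{\omega_i})^{\omega'_i},[B_{\omega_i},c^\partial_{\omega_i}]\rangle$, reducing the proposition to the case $k=1$.

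For $k=1$ one must show $\langle(c_1^{M\?U}\psi_m)^\ell,[B_m,c^\partial_m]\rangle=\delta_{\ell m}$ in $\varOmega^U_*$ for $0\le\ell\le m$ (equivalently, $(\psi_m)_*[B_m,c^\partial_m]=b_m$ in $\varOmega^U_{2m}(\bC P^\infty)$). I would induct on $m$, with base case $B_0=*$, using the Bott-tower projection $p\colon B_m\to B_{m-1}$, a smooth $2$-sphere bundle. The isomorphism defining $c^\partial_m$ restricts on the summand pulled back along $p$ to the one defining $c^\partial_{m-1}$, and endows the vertical tangent bundle with a fibrewise complex structure; hence $p$ is a complex-oriented map, with Gysin homomorphism $p_!$ satisfying $p_![B_m,c^\partial_m]=[B_{m-1},c^\partial_{m-1}]$, $p_!(1)=0$, and $p_!(c_1^{M\?U}\psi_m)=1$ (since $\psi_m$ is a degree-one line bundle on each fibre). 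Realising $B_m$ as $\bP(\bC\oplus\lambda)$ over $B_{m-1}$, with $\psi_m$ the dual of the tautological subbundle, the presence of the trivial summand forces the projective-bundle relation into the form $(c_1^{M\?U}\psi_m)^\ell=(c_1^{M\?U}\psi_{m-1})^{\ell-1}\cdot c_1^{M\?U}\psi_m$ for $\ell\ge1$ (the bundle $\psi_{m-1}$ being pulled back along $p$); applying $p_!$ and the projection formula gives $\langle(c_1^{M\?U}\psi_m)^\ell,[B_m,c^\partial_m]\rangle=\langle(c_1^{M\?U}\psi_{m-1})^{\ell-1},[B_{m-1},c^\partial_{m-1}]\rangle$ for $\ell\ge1$, and $0$ for $\ell=0$, which is $\delta_{\ell m}$ by induction. (The $\ell=0$ instance recovers $[B_m,c^\partial_m]=0$.) Alternatively one may quote the analysis of the bounded flag manifolds in \cite{bu-ra98} and \cite{ray86}, where this duality is proved.

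The main obstacle I anticipate is the orientation and normalisation bookkeeping. Getting the coefficient of $b_\omega$ to be exactly $+1$ requires that $\psi_\omega^*u_i$ be $c_1^{M\?U}(\psi_{\omega_i})$ rather than its formal inverse, that fibre integration send $c_1^{M\?U}\psi_m$ to $+1$, and --- most delicately --- that the line bundle twisting the Bott tower be $\psi_{m-1}$ itself and not $\bar\psi_{m-1}$ (this last choice would introduce a factor $(-1)^{m-1}$, the correction terms in the expansion of $c_1^{M\?U}\bar\psi_{m-1}$ dropping out against lower-degree fundamental classes by induction but the sign surviving). All of these hinge on the sign and Hopf-bundle conventions of Section~\ref{in}, on the precise action \eqref{tjbflag}, and on the isomorphism defining $c^\partial_j$; the remaining steps are formal.
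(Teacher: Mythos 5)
Your reduction to $k=1$ via multiplicativity of the Kronecker pairing is exactly the step the paper leaves implicit, and your inductive $\bC P^1$-bundle argument for the $k=1$ case supplies the content of the cited \cite[Proposition 2.2]{ray86}, which the paper invokes with no further detail. The sign and normalisation issues you flag --- which line bundle twists the Bott tower, and whether fibre integration sends the $MU$-Chern class of $\psi_m$ to $+1$ --- are precisely the bookkeeping the paper delegates to the citation, so there is no gap in your proposal beyond that honest caveat.
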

\begin{proof}
  This follows directly from the case $k=1$, proven in
  \cite[Proposition 2.2]{ray86}.
\end{proof}

We may now return to Proposition \ref{loffform}, and the classes
$g_\omega(M)$ of \eqref{defgomega}.

\begin{definitions}\label{Gomegadef}
  Let $T^\omega$ be the torus $T^{\omega_1}\times\ldots\times
  T^{\omega_k}$ and $(S^3)^\omega$ the product
  $(S^3)^{\omega_1}\times\ldots\times (S^3)^{\omega_k}$, on which
  $T^\omega$ acts coordinatewise. The manifold $G_\omega(M)$ is the
  orbit space $(S^3)^\omega\times_{T^{\omega}}M$, where $T^\omega$
  acts on $M$ via the representation
\begin{equation}\label{tomegatk}
(t_{1,1},\ldots,t_{1,\omega_1};\,\dots\,;t_{k,1},\ldots,t_{k,\omega_k})
\;\longmapsto\;(t^{-1}_{1,\omega_1},\dots,t^{-1}_{k,\omega_k}).
\end{equation}
The stably complex structure $c_\omega$ on $G_\omega(M)$ is induced
by the tangential structures $c_\tau$ and $c^\partial_\omega$ on the
base and fibre of the bundle $M\ra G_\omega(M)\ra B_\omega$.
\end{definitions}
\begin{corollary}\label{Gmrepsgm}
  The manifold $G_\omega(M)$ represents $g_\omega(M)$ in
  $\varOmega^U_{2(|\omega|+n)}$.
\end{corollary}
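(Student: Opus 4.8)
The plan is to obtain Corollary \ref{Gmrepsgm} by specialising Theorem \ref{Gpirepsgpi} to the case $X=*$ and then identifying the resulting bundle with the one introduced in Definitions \ref{Gomegadef}. When $X$ is a point, $\varOmega^*_U(ET^k\times_{T^k}X)$ is just $\varOmega^*_U(BT^k)$, free of rank one over itself, so the indexing in \eqref{defgpsi} collapses to $v_0=1$, $d_0=0$, $p=0$, and the dual class $a_{0,\omega}$ is the basis element $b_\omega\in\varOmega^U_{2|\omega|}(BT^k)$ dual to $u^\omega$; thus $g_{0,\omega}(\pi)$ is exactly the coefficient $g_\omega(M)$ of \eqref{defgomega}. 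Theorem \ref{Gpirepsgpi} then represents $g_\omega(M)$ on the total space of the pullback, along any geometric representative of $b_\omega$, of the Borel bundle $M\ra ET^k\times_{T^k}M\ra BT^k$, the stably complex structure being the one induced via \eqref{gpsipistruc} by the Borelification of $c_\tau$ along the fibres and by the chosen tangential structure on the base. In particular the last assertion $g_0(M)=[M]$ is the case $\omega=0$, since then the bundle is $M$ itself and $g_{0,0}(\pi)=[F]=[M]$.

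By Proposition \ref{udual} I would take $(B_\omega,c^\partial_\omega)$, with the classifying map $\psi_\omega$ of the external product $\psi_{\omega_1}\times\dots\times\psi_{\omega_k}$, as the geometric representative of $b_\omega$. It then remains to identify the pullback $\psi_\omega^{\,*}(ET^k\times_{T^k}M)$ with $(S^3)^\omega\times_{T^\omega}M$. Using the presentation $B_{\omega_j}=(S^3)^{\omega_j}/T^{\omega_j}$ under the action \eqref{tjbflag}, the line bundle $\psi_{\omega_j}$ is associated to the character of $T^{\omega_j}$ projecting onto its last coordinate; hence the principal $T^k$-bundle classified by $\psi_\omega$ is the quotient of $(S^3)^\omega$ by the subtorus $T'<T^\omega$ obtained by deleting the last coordinate in each of the $k$ blocks, with $T^\omega/T'$ identified with $T^k$ via those deleted coordinates. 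Pulling the Borel construction back along the corresponding classifying map gives $\bigl((S^3)^\omega/T'\bigr)\times_{T^k}M=(S^3)^\omega\times_{T^\omega}M$, in which $T^\omega$ acts on $M$ through the projection $T^\omega\ra T^k$ followed by the given action; converting between our two conventions for Borel-type quotients (recall $t\cdot(x,y)=(t^{-1}x,ty)$) turns this action into precisely the representation \eqref{tomegatk}. Carrying out this conversion with care — in particular checking that the inverses and the choice of coordinates in \eqref{tomegatk} emerge exactly as written — is the one genuinely fiddly step.

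Finally I would match the stably complex structures and dimensions. Formula \eqref{gpsipistruc} expresses the structure on $(S^3)^\omega\times_{T^\omega}M$ as the one induced by $c_\tau$ along the fibres and by $c^\partial_\omega$ pulled back from the base $B_\omega$, which is exactly $c_\omega$ of Definitions \ref{Gomegadef} for the bundle $M\ra G_\omega(M)\ra B_\omega$; and $\dim G_\omega(M)=\dim B_\omega+\dim M=2|\omega|+2n=2(|\omega|+n)$, as required. For completeness one can note consistency with L\"offler's Proposition \ref{loffform}: both $\varGamma^\omega(M)$ and $G_\omega(M)$ represent $g_\omega(M)$, and in fact the iterated $S^3$-quotients defining $\varGamma^\omega(M)$ reassemble, via the action \eqref{tjbflag}, into the bounded flag manifold bundle underlying $G_\omega(M)$ once one passes between the normal and tangential forms of the structure.
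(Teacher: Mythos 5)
Your proof is correct and follows the same route as the paper's, which simply applies Theorem \ref{Gpirepsgpi} with $X=*$ and asserts that $G_{0,\omega}(\pi)$ reduces to $G_\omega(M)$ with matching stably complex structures. You have usefully filled in the details the paper leaves implicit — invoking Proposition \ref{udual} to represent $b_\omega$ by $(B_\omega,\psi_\omega)$, identifying the pullback of the Borel bundle with $(S^3)^\omega\times_{T^\omega}M$, and verifying the sign conventions and structures — but the underlying argument is the same.
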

\begin{proof}
  Apply Theorem \ref{Gpirepsgpi} in the case $X=*$.  Then $\pi$
  reduces to the identity map on $M$, and $G_{0,\omega}(\pi)$ reduces to
  $G_\omega(M)$; furthermore the stably complex structures
  \eqref{gpsipistruc} and $c_\omega$ of Definition \ref{Gomegadef} are
  equivalent.
\end{proof}
Note that L\"offler's $\varGamma^\omega(M)$ is $T^k$-equivariantly
diffeomorphic to $G_\omega(M)$ for all $\omega$.


\subsection{Multiplicativity and rigidity}
Historically, the first rigidity results are due to Atiyah and
Hirzebruch \cite{at-hi70}, and relate to the $T$-equivariant
$\chi_y$-genus and $\lowidehat{A}$-genus; their origins lie in the
Atiyah-Bott fixed point formula \cite{at-bo68}, which also acted
as a catalyst for the development of equivariant index theory.

Krichever \cite{kric74}, \cite{kric90} considers rational valued
genera $l$, and equivariant extensions
\smash{$l'\colon\varOmega^{U:T^k}_*\ra K^0(BT^k_+)\otimes\bQ$}
that arise from \smash{$l^{T^k}$} via the Chern-Dold character. If
$l$ may be realised on a stably complex manifold as the index of
an elliptic complex, then the value of $l'$ on any stably complex
$T^k$-manifold $M$ is a representation of $T^k$, which determines
$l'(M)$ in the representation ring $RU(T^k)\otimes\bQ$, and hence
in its completion $K^0(BT^k_+)\otimes\bQ$. Krichever's
interpretation of rigidity is to require that $l'(M)$ should lie
in the coefficient ring $K_0\otimes\bQ$ for every $M$. In the case
of an index, this amounts to insisting that the corresponding
$T^k$-representation is always trivial, and therefore conforms to
Atiyah and Hirzebruch's original notion \cite{at-hi70}.

The following definition extends that of \cite[Chapter 4]{h-b-j94}
for the oriented case. It applies to fibre bundles of the form
\smash{$M\to E\times_GM\stackrel{\pi}{\to} B$}, where $M$ and $B$
are closed, connected, and stably tangentially complex, $G$ is a
compact Lie group of positive rank whose action preserves the
stably complex structure on $M$, and $E\ra B$ is a principal
$G$-bundle. In these circumstances, $\pi$ is stably tangentially
complex, and $N\letbe E\times_GM$ inherits a canonical stably
complex structure.
\begin{definitions}\label{genusdefs}
  A genus $\ell\colon\varOmega^U_*\to R_*$ is \emph{multiplicative
  with respect to} the stably complex manifold
  $M$ whenever $\ell(N)=\ell(M)\ell(B)$ for any such bundle $\pi$;
  if this holds for every $M$, then $\ell$ is \emph{fibre
  multiplicative}.

The genus $\ell$ is \emph{$T^k$-rigid on} $M$ whenever
\smash{$\ell^{\,T^k}\colon\varOmega_*^{U:T^k}\lra
R_*[[u_1,\dots,u_k]]$} satisfies
\smash{$\ell^{\,T^k}(M,c_\tau)=\ell(M)$}; if this holds for every
$M$, then $\ell$ is \emph{$T^k$-rigid}.
\end{definitions}

We often indicate the rigidity of $\ell$ by referring to the
formal power series \smash{$\ell^{\,T^k}(M)$} as \emph{constant},
in which case $\ell(G_\omega(M))=0$ for every $|\omega|>0$, by
Corollary \ref{Gmrepsgm}.

In fact $\ell$ is $T^k$-rigid if and only if its rationalisation
$\ell_\bQ$ is rigid in Krichever's original sense. By way of
justification, we consider the universal example $ug$ of Example
\ref{cgenexs}(1); so \smash{$ug^{T^k}=\varPhi$} by construction,
and $ug_\bQ$ coincides with the Hurewicz genus
$hr_\bQ\colon\varOmega^U_*\ra
H_*(\MU;\bQ)\cong\varOmega^U_*\otimes\bQ$. The commutative square
\begin{equation}\label{ksq}
\begin{CD}
  \MU@>k^{M\?U}>>K\wedge\MU\\
  @Vh^{M\?U}\!VV@VV{h^{\?K}\!\wedge\£ 1}V\\
  H\wedge\MU@>{\;1\wedge k^{M\?U}\;}>>H\wedge K\wedge\MU
\end{CD}
\smallskip
\end{equation}
defines a homotopical genus $(k^{M\?U}\!,h^K\!\wedge 1)$, where
$k^{M\?U}$ and $h^K\colon K\ra H\wedge K$ induce the $K$-theoretic
Hurewicz homomorphism and the Chern character respectively. The genus
$k^{M\?U}\!$ is used to develop integrality results by tom Dieck
\cite[\S6]{tomd70}.

Applying \eqref{ksq} to $BT^k$ yields two distinct factorisations
of the homomorphism
\[
  \varOmega^U_*[[u_1,\dots,u_k]]\llra
  H_*(\MU;\bQ[z,z^{-1}])[[u_1,\dots,u_k]]\,,
\]
where $z$ denotes the image of the Bott periodicity element in
$H_2(K)\cong\bQ$. For any stably complex $T^k$-manifold $M$, the
lower-left factorisation maps $\varPhi(M)$ to
$\sum_{|\omega|\geq0}hr_\bQ(g_\omega(M))\,u^\omega$. On the other
hand, $k^{M\?U}_*\varPhi(M)$ lies in $K_*(\MU)[[u_1,\dots,u_k]]$,
where we may effect Krichever's change of orientation by following
\eqref{bmseries}, and rewriting $u_i$ as $\sum_jb^{K\wedge
  M\?U}_j\?x^K_i$ for $1\leq i\leq k$. Applying the Chern character
completes the right-upper factorisation, and identifies
$\varPhi(M)_\bQ$ with Krichever's formal power series
$ug'_\bQ(x_1^K,\dots,x_k^K)$ in
$\varOmega_*^U\otimes\bQ[z,z^{-1}][[x^K_1,\dots,x^K_k]]$.

But the change of orientation is invertible, so $\varPhi(M)_\bQ$
is a constant function of the $u_i$ if and only if
$ug'_\bQ(x_1^K,\dots,x_k^K)$ is a constant function of the
$x^K_i$, as claimed.

Krichever's choice of orientation is fundamental to his proofs of
rigidity, which bring techniques of complex analysis to bear on
Laurent series such as those of Proposition \ref{krich}. We follow
his example to a limited extent in Section \ref{apfuex}, where we
interpret certain \smash{$\ell^{\,T^k}(M)$} as formal powers
series over $\bC$, and study their properties using classical
functions of a complex variable. This viewpoint offers a powerful
computational tool, even in the homotopical context.

For rational genera in the oriented category, Ochanine
\cite[Proposition 1]{ocha88} proves that fibre multiplicativity
and rigidity are equivalent. In the toric case, we have the
following stably complex analogue, whose conclusions are integral.
It refers to bundles \smash{$E\times_GM\stackrel{\pi}{\ra}B$} of
the form required by Definition \ref{genusdefs}, where $G$ has
maximal torus $T^k$ with $k\geq 1$.
\begin{theorem}\label{multandrig}
  If the genus $\ell$ is $T^k$-rigid on $M$, then it is mutiplicative
  with respect to $M$ for bundles whose structure group $G$ has the
  property that $\varOmega^*_U(BG)$ is torsion-free; on the other
  hand, if $\ell$ is multiplicative with respect to a stably
  tangentially complex $T^k$-manifold $M$, then it is $T^k$-rigid on
  $M$.
\end{theorem}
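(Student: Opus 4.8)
The plan is to treat the two implications separately, exploiting the formula $\ell^{\,T^k} = \ell\cdot\varPhi$ together with the geometric description of $\varPhi$ developed above. For the second implication (multiplicativity $\Rightarrow$ rigidity), I would take a bundle of the form required by Definition \ref{genusdefs} and feed it the universal classifying situation: replace $B$ by the compact approximations $\varPi\?S(q)\times_{T^k}M$ (equivalently, by $BT^k$ itself, with $\lim^1$ arguments as usual), so that $\pi$ becomes the Borel fibration $M\to ET^k\times_{T^k}M\to BT^k$ whose cobordism class is exactly $\varPhi(M,c_\tau)$ by the geometric identification preceding Proposition \ref{loffform}. Here $G=T^k$ and $E=ET^k$; multiplicativity with respect to $M$ then forces $\ell(G_\omega(M))=0$ for all $|\omega|>0$ via Corollary \ref{Gmrepsgm} and the expansion \eqref{ltkexpress}, since $G_\omega(M)$ is the total space of an $M$-bundle over the \emph{bounding} manifold $B_\omega$, hence has $\ell(B_\omega)=0$. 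Thus $\ell^{\,T^k}(M,c_\tau)=\ell(M)$, which is precisely $T^k$-rigidity on $M$.

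For the first implication (rigidity $\Rightarrow$ multiplicativity when $\varOmega^*_U(BG)$ is torsion-free), I would argue as follows. Given a principal $G$-bundle $E\to B$ and the associated bundle $N=E\times_GM\xrightarrow{\pi}B$, I want to compute $\ell(N)$. The idea is to push the computation through the classifying map $B\to BG$ and a maximal torus $T^k<G$: the bundle $N\to B$ is pulled back from the universal $M$-bundle $EG\times_GM\to BG$, and since $\varOmega^*_U(BG)$ is torsion-free it injects into $\varOmega^*_U(BT^k)=\varOmega^U_*[[u_1,\dots,u_k]]$ (by the standard transfer/splitting argument for the fibration $G/T^k\to BT^k\to BG$, whose fibre has Euler characteristic a unit). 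So it suffices to check the multiplicativity identity after restricting the universal $M$-bundle along $BT^k\to BG$ --- but that restriction is exactly the Borel fibration computing $\varPhi(M,c_\tau)\in\varOmega^U_*[[u_1,\dots,u_k]]$. By hypothesis $\ell$ is $T^k$-rigid on $M$, so applying $\ell$ to the universal class over $BT^k$ yields the constant $\ell(M)$; pulling back along $B\to BT^k\to BG$ (compatibly with the Gysin map for $\pi$) then gives $\ell(N)=\ell(M)\ell(B)$, as the Gysin pushforward along $\pi$ corresponds to multiplication by $\ell(M)$ on the torsion-free coefficient module. I would phrase this cleanly using the naturality of $\varPhi_X$ and the Gysin homomorphism under pullback of smooth $T^k$-maps (Proposition \ref{cttcn} and the discussion of $(1\times_{T^k}\pi)_*1$).

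I expect the main obstacle to be the first implication, specifically making rigorous the passage from a general compact Lie group $G$ to its maximal torus $T^k$. The torsion-freeness of $\varOmega^*_U(BG)$ is used precisely to guarantee that the restriction map $\varOmega^*_U(BG)\to\varOmega^*_U(BT^k)$ is injective and that one can detect the needed identity there; I would need the complex-oriented analogue of the splitting principle for the fibration $G/T^k\to BT^k\to BG$, which holds because the Euler characteristic of $G/T^k$ is $|W|\neq 0$, invertible after one knows there is no torsion. A secondary technical point is ensuring that the finiteness hypothesis of the equivariant setup --- that $\varOmega^*_U(ET^k\times_{T^k}X)$ be a finitely generated free $\varOmega^*_U(BT^k)$-module, needed for \eqref{defgpsi} --- is compatible with taking $X=M$ here, or else arguing directly with the $X=*$ formulae \eqref{defgomega}--\eqref{ltkexpress}, which is cleaner and avoids that hypothesis altogether. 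Everything else is bookkeeping with the geometric representatives $G_\omega(M)$, the bounding structures $c^\partial_\omega$, and naturality of the Gysin map.
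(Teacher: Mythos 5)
Your proposal follows essentially the same route as the paper: for multiplicativity $\Rightarrow$ rigidity you use the bundles $G_\omega(M)\to B_\omega$ and the fact that $B_\omega$ bounds, exactly as in the text; for rigidity $\Rightarrow$ multiplicativity you pass to the universal bundle over $BG$, restrict to $BT^k$ to invoke rigidity via $\varPhi$, detect via injectivity of $\varOmega^*_U(BG)\to\varOmega^*_U(BT^k)$, and pull back to $B$. One slip worth correcting in your write-up: $B$ maps to $BG$ but does not factor through $BT^k$, so ``pulling back along $B\to BT^k\to BG$'' should refer to the pair of maps $B\to BG\leftarrow BT^k$; the paper's mechanism is to write $\pi^G_*1=[M]\cdot 1+\beta$ with $\beta$ in the reduced group $\varOmega^{-2n}_U(BG)$, show $\ell(i^*\beta)=0$ from rigidity, conclude $\ell(\beta)=0$ by injectivity, and only then pull $\beta$ back along $f\colon B\to BG$ before pushing forward to a point.
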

\begin{proof}
Let $\ell$ be $T^k$-rigid, and consider the pullback squares
\begin{equation}\label{pulsqs}
\begin{CD}
  E\times_GM@>f'>>EG\times_GM@<i'<<ET^k\times_{T^k}M\\
  @V\pi VV@V{\pi^G}VV@V{\pi^{T^k}}VV\\
  B^{2b}@>f>>BG@<i<<BT^k,
\end{CD}
\smallskip
\end{equation}
where $\pi^G$ is universal, $i$ is induced by inclusion, $f$
classifies $\pi$, and $E\times_GM=N$. If $1$ is the unit in
$\varOmega^0((EG\times_GM)_+)$, then $\pi^G_*1=[M]\cdot 1+\beta$,
where $\beta$ lies in the reduced group $\varOmega^{-2n}(BG)$ and
satisfies $\pi_*1=[M]\cdot1+f^*\beta$ in $\varOmega^{-2n}(B_+)$.
Applying the Gysin homomorphism associated to the augmentation map
$\epsilon^B\colon B\ra*$ yields
\begin{equation}\label{mult}
[N]\;=\;\epsilon^B_*\pi_*1\;=\; [M][B]+\epsilon^B_*f^*\beta
\end{equation}
in $\varOmega_{2(n+b)}$; so $\ell(N)=\ell(M)\ell(B)+
\ell(\epsilon^B_*f^*\beta)$. Moreover,
\smash{$i^*\beta=\sum_{|\omega|>0}g_\omega(M)u^\omega$} in
$\varOmega^{-2n}(BT)$, so $\ell(i^*\beta)=0$. The assumptions on $G$
ensure that $i^*$ is monic, and hence that $\ell(\beta)=0$ in
$\varOmega^*_U(BG)\otimes_\ell R_*$. Multiplicativity then follows
from \eqref{mult}.

Conversely, suppose that $\ell$ is multiplicative with respect to $M$,
and consider the manifold $G_\omega(M)$ of Corollary
\ref{Gmrepsgm}. By Definition \ref{Gomegadef}, it is the total space
of the bundle
$\big((S^3)^\omega\times_{T^\omega}\!T^k\big)\times_{T^k}\!M\ra
B_\omega$, which has structure group $T^k$; therefore $\ell(G_\omega(M))=0$,
because $B_\omega$ bounds for every $|\omega|>0$. So $\ell$ is
$T^k$-rigid on $M$.
\end{proof}

\begin{remark}
  We may define $\ell$ to be \emph{$G$-rigid} when $\ell(\beta)=0$, as
  in the proof of
  Theorem \ref{multandrig}.
  By choosing a suitable
  subcircle $T<T^k$, it follows that $T$-rigidity implies $G$-rigidity
  for any $G$ such that $\varOmega^*_U(BG)$ is
  torsion-free.
\end{remark}

\begin{example}\label{rigidex}
The signature $sg$ of Examples \ref{cgenexs}(6) is fibre
multiplicative over any simply connected base \cite{h-b-j94}, and so
is rigid.
\end{example}

%
%
%
%
%
%
%
%
%

\section{Isolated fixed points}\label{isfipo}

In this section and the next, we focus on stably tangentially complex
$T^k$-manifolds $(M^{2n},a,c_\tau)$ for which the fixed points $x$ are
isolated; in other words, the fixed point set $\fix(M)$ is finite. We
proceed by adapting Quillen's methods to describe $\varPhi(\pi)$ for
any stably tangentially complex $T^k$-bundle $\pi\colon E\ra X$, and
deducing a localisation formula for $\varPhi(M)$ in terms of fixed
point data. This extends Krichever's formula \eqref{kf}. We give
several illustrative examples, and describe the consequences for
certain non-equivariant genera and their $T^k$-equivariant
extensions. A condensed version of parts of this section appears in
\cite{bu-ra07} for the case $k=1$.

For any fixed point $x$, the representation $r_x\colon T^k\to
\GL(l,\bC)$ associated to \eqref{rtpi} decomposes the fibre $\xi_x$ as
$\mathbb{C}^n\oplus\mathbb{C}^{l-n}$, where $r_x$ has no trivial
summands on $\bC^n$, and is trivial on $\bC^{l-n}$. Also, $c_{\tau,x}$
induces an orientation of the tangent space $\tau_x(M)$.
\begin{definition}\label{defsign}
For any $x\in\fix(M)$, the \emph{sign} $\varsigma(x)$ is $+1$ if the
isomorphism
\[
  \tau_x(M)\stackrel{i}{\llra}\tau_x(M)\oplus
  \mathbb{R}^{2(l-n)}\stackrel{c_{\tau\!,x}}{\llra}\xi_x
  \cong\mathbb{C}^n\oplus\mathbb{C}^{l-n}\stackrel{p}{\llra}
  \mathbb{C}^n
\]
respects the canonical orientations, and $-1$ if it does not; here
$i$ and $p$ are the inclusion of, and projection onto the first
summand, respectively.
\end{definition}

So $\varsigma(x)$ compares the orientations induced by $r_x$ and
$c_{\tau,x}$ on $\tau_x(M)$, and if $M$ is almost complex then
$\varsigma(x)=1$ for every $x\in\fix(M)$. The non-trivial summand
of $r_x$ decomposes into $1$-dimensional representations as
\begin{equation}\label{repre}
  r_{x,1}\oplus\ldots\oplus r_{x,n},
\end{equation}
and we write the integral \emph{weight vector} of $r_{x,j}$ as
$w_j(x)\letbe(w_{j,1}(x),\dots,w_{j,k}(x))$, for $1\leq j\leq n$. We
refer to the collection of signs $\varsigma(x)$ and weight vectors
$w_j(x)$ as the \emph{fixed point data} for $(M,c_\tau)$.

Localisation theorems for stably complex $T^k$-manifolds in
equivariant generalised cohomology theories appear in tom Dieck
\cite{tomd70}, Quillen \cite[Proposition 3.8]{quil71},
Krichever~\cite[Theorem 1.1]{kric74}, Kawakubo~\cite{kawa80}, and
elsewhere. We prove our Corollary~\ref{evutg} by interpreting their
results in the case of isolated fixed points, and identifying the
signs explicitly. To prepare, we recall the most important details of
\cite[Proposition 3.8]{quil71}, in the context of \eqref{phitan} and
\eqref{phinorm}.

Given any representative $\pi\colon E\ra X$ for $\pi$ in
\smash{$\varOmega^{-2n}_{U:T^k}(X_+)$}, we approximate
$\varPhi(\pi)$ by the projection map $\pi(q)\letbe 1\times_{T^k}\pi$
of the smooth fibre bundle
\[
  F\llra \varPi\?S(q)\times_{T^k}E\llra \varPi\?S(q)\times_{T^k}X
\]
for suitably large $q$. So $\pi(q)$ is complex oriented by the
corresponding approximation to \eqref{phinorm}, whose associated
embedding $h(q)\colon\varPi\?S(q)\times_{T^k}E\ra
\varPi\?S(q)\times_{T^k}(X\times V)$ is the Borelification of
$(\pi,i)$. Then there exists a commutative square
\begin{equation}\label{quilsq}
\begin{CD}
  \varPi\?P(q)\times\fix(E)\!\!@>1\times_{T^k}r_E>>
  \varPi\?S(q)\times_{T^k}E\\
  @V{\pi'(q)}VV@VV{\pi(q)}V\\
  \varPi\?P(q)\times\fix(X)\!\!@>1\times_{T^k}r_X>>
  \varPi\?S(q)\times_{T^k}X\,,
\end{CD}
\end{equation}
where $r_E$ and $r_X$ are the inclusions of the fixed point
submanifolds, and $\pi'(q)$ is the restriction of $1\times\pi$.

Let $\mu'(q)$ and $\mu(V;q)$ denote the respective decompositions of
the complex $T^k$-bundles $(1\times_{T^k}r_E)^*\nu(h(q))$ and
$\varPi\?S(q)\times_{T^k}V\ra\varPi\?P(q)$ into eigenbundles, given
by non-trivial $1$-dimensional representations of~$T^k$. They have
respective Euler classes $e(\mu'(q))$ in
$\varOmega^{2(|V|-n)}_U(\varPi\?P(q)\times\fix(E))$ and
$e(\mu(V;q))$ in $\varOmega^{2|V|}_U(\varPi\?P(q))$. Following
\cite[Remark 3.9]{quil71}, we then define
\begin{equation}\label{defemupiq}
e(\mu_{\pi(q)})\;\letbe\;e(\mu'(q))/e(\mu(V,q)).
\end{equation}
This expression may be interpreted as a $-2n$ dimensional element
of the localised ring
$\varOmega_U^*(\varPi\?P(q)\times\fix(E))[e(\mu(V,q))^{-1}]$, or
as a formal quotient whose denominator cancels wherever it appears
in our formulae. In either event, $e(\mu_{\pi(q)})$ is determined
by the stably complex structure $c_\tau(\pi)$ on $\tau_F(E)$, but
is independent of the choice of complementary complex orientation.

Quillen shows that, for any $z\in \varOmega^*_U(\varPi\?
S(q)\times_{T^k}E)$, the equation
\begin{equation}\label{quilf}
  (1\times_{T^k}r_X)^*\pi(q)_*z\;=\;
\pi'(q)_*\big(e(\mu_{\pi(q)})\cdot(1\times_{T^k}r_E)^*z\big)
\end{equation}
holds in $\varOmega^*_U(\varPi\?P(q)\times\fix(X))$. In evaluating
such terms we may take advantage of the K\"unneth formula, because
$\varOmega^*_U(\varPi\?P(q))$ is free over $\varOmega_*^U$.

When the fixed points of $\pi\colon E\ra X$ are isolated, this
analysis confirms that the notion of fixed point data extends to
$x\in\fix(E)$; the signs $\varsigma(x)$ and weight vectors
$w_j(x)$ are determined by the stably complex $T^k$-structure on
$\tau_F(E)$, for $1\leq j\leq n$. Every such $x$ lies in a fibre
$F_y\letbe\pi^{-1}(y)$, for some fixed point $y\in\fix(X)$.

\begin{theorem}\label{evutgx}
  For any stably tangentially complex $T^k$-bundle $F\ra
  E\stackrel{\pi}{\ra}X$ with isolated fixed points, the equation
\begin{equation}\label{lfxX}
  {\varPhi(\pi)\big|}_{\fix(X)}
\;=\;\sum_{\fix(X)}\;\sum_{\fix(F_y)}
\varsigma(x)\prod_{j=1}^n\frac1{[w_j(x)](u)}
\end{equation}
is satisfied in $\bigoplus_{\fix(X)}\varOmega^{-2n}_U(BT^k_+\times
y)$, where $y$ and $x$ range over $\fix(X)$ and $\fix(F_y)$
respectively .
\end{theorem}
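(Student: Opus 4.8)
The plan is to specialise Quillen's localisation formula \eqref{quilf} to the unit class $z=1$ and to the case of isolated fixed points, and then to pass to the limit over $q$. Recall that $\varPhi(\pi)$ is the inverse limit of the Gysin images $\pi(q)_*1$ in $\varOmega^{-2n}_U(\varPi\?S(q)\times_{T^k}X)$, and that the map $1\times_{T^k}r_X$ of \eqref{quilsq} approximates the inclusion of the Borelified fixed submanifold. Applying $(1\times_{T^k}r_X)^*$ and letting $q\to\infty$ therefore computes the left-hand side of \eqref{lfxX}; the usual $\lim^1$ argument over the tower $\{\varPi\?P(q)\}$ applies because each $\varOmega^*_U(\varPi\?P(q))$ is free of finite type over $\varOmega^U_*$, so that K\"unneth is available and the relevant derived limit vanishes.

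First I would unwind the right-hand side $\pi'(q)_*\bigl(e(\mu_{\pi(q)})\bigr)$ of \eqref{quilf}. When the fixed points are isolated we have $\fix(E)=\coprod_{y\in\fix(X)}\fix(F_y)$ with each $\fix(F_y)$ finite, so the map $\pi'(q)\colon\varPi\?P(q)\times\fix(E)\ra\varPi\?P(q)\times\fix(X)$ of \eqref{quilsq} is a disjoint union of copies of $\mathrm{id}_{\varPi\?P(q)}$, one for each $x\in\fix(F_y)$. Hence $\pi'(q)_*$ is the corresponding sum of restriction maps, and \eqref{quilf} with $z=1$ reads
\[
{\varPhi(\pi)\big|}_{\fix(X)}\;=\;\lim_q\;\sum_{\fix(X)}\;\sum_{\fix(F_y)}\;{e(\mu_{\pi(q)})\big|}_x
\]
in $\bigoplus_{\fix(X)}\varOmega^{-2n}_U(BT^k_+\times y)$. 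It remains to compute the excess Euler class ${e(\mu_{\pi(q)})\big|}_x=e(\mu'(q))\big|_x\big/e(\mu(V;q))$ of \eqref{defemupiq} at each such $x$.

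For this I would use the equivariant isomorphism $\tau_F(E)\oplus\nu(\pi,i)\cong V$ over $E$ from the proof of Proposition \ref{cttcn}. Restricting it at $x$ and Borelifying over $\varPi\?P(q)$, the trivial summands cancel, so the non-trivial part of the Borelification of $V$ — namely $\mu(V;q)$ — is identified with $\mu'(q)\big|_x$ together with the Borelification of the non-trivial part \eqref{repre} of $\tau_x(F_y)$; the latter decomposes as $\zeta^{w_1(x)}\oplus\dots\oplus\zeta^{w_n(x)}$, whose Euler class over $\varPi\?P(q)$ converges to $\prod_{j=1}^n[w_j(x)](u)$ of \eqref{powersys}. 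The only delicate point is orientation: by Definition \ref{defsign} the complex structure coming from the representation $r_x$ (which supplies the weights) and the complementary complex structure underlying $\mu'(q)$ are compatible with $c_\tau(\pi)$ precisely up to the sign $\varsigma(x)$, so reversing orientation on the $2n$ real dimensions of $\tau_x(F_y)$ when $\varsigma(x)=-1$ gives $e(\mu(V;q))=\varsigma(x)\,e(\mu'(q))\big|_x\prod_{j=1}^n[w_j(x)](u)$ and hence ${e(\mu_{\pi(q)})\big|}_x=\varsigma(x)\prod_{j=1}^n 1/[w_j(x)](u)$, understood as the formal quotient of \eqref{defemupiq}. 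Substituting into the displayed formula and letting $q\to\infty$ yields \eqref{lfxX}.

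I expect the sign bookkeeping of the last step to be the main obstacle: reconciling Quillen's normal-complex orientation conventions with the tangential weight data, verifying that the discrepancy between them is accounted for exactly by $\varsigma(x)$ with no further correction, and checking that the sum over $x$ of the a priori localised terms $\prod_{j=1}^n 1/[w_j(x)](u)$ lands back in $\varOmega^{-2n}_U(BT^k_+\times y)$, all negative-degree contributions cancelling. The remaining ingredients — Quillen's formula \eqref{quilf}, the structure of the square \eqref{quilsq}, and the approximation of $\varPhi(\pi)$ by the finite stages $\varPi\?P(q)$ — are already in place.
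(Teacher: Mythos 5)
Your proof is correct and follows essentially the same route as the paper's: specialise Quillen's localisation formula \eqref{quilf} at $z=1$, decompose $\pi'(q)_*$ over the finite fixed-point sets, compute the excess Euler class at each $x$ via the splitting of the Borelification of $V$ into $\mu'(q)|_x$ and the non-trivial part of $\tau_x(F_y)$, identify the latter's Euler class with $\prod_j[w_j(x)](u)$, extract the sign $\varsigma(x)$ from Definition \ref{defsign}, and pass to the limit over $q$. The sign analysis you flag as the "main obstacle" is handled in the paper exactly as you describe, by noting that the isomorphism $\mu(V_y,q)|_x\cong\tau_x(F_y)\oplus\mu'_y(q)|_x$ respects or reverses orientation according to $\varsigma(x)$, so no further correction is needed.
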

\begin{proof}
  Set $z=1$ in \eqref{quilf}. By definition, $\pi(q)_*(1)$ is
  represented by the complex oriented map $\pi(q)$ in
  $\varOmega^{-2n}_U(\varPi\?S(q)\times_{T^k}X)$, and
  approximates $\varPhi(\pi)$; denote it by
  $\varPhi(\pi;q)$. Its restriction to $\varPi\?P(q)\times y$ for
  any fixed point $y$ is represented by the induced projection
  $\pi_y(q)\colon\varPi\?S(q)\times_{T^k}F_y\ra\varPi\?P(q)\times
  y$, which is complex oriented via the restriction
\[
1\times_{T^k}i_y\colon
\varPi\?S(q)\times_{T^k}F_y\lra\varPi\?S(q)\times_{T^k}V_y
\]
of the Borelification of $(\pi,i)$ to $F_y$ .

The commutative square \eqref{quilsq} breaks up into components of
the form
\[
\begin{CD}
  \varPi\?P(q)\times\fix(F_y)\!\!@>1\times_{T^k}r_{F_y}>>
  \varPi\?S(q)\times_{T^k}F_y\\
  @V{\pi'_y(q)}VV@VV\pi_y(q)V\\
  \varPi\?P(q)\times y\!\!@>1>>
  \varPi\?P(q)\times y
\end{CD}\quad,
\]
on which \eqref{quilf} reduces to
\begin{equation}\label{newrestrict}
  {\varPhi(\pi;q)|}_y\;=\;\pi'_y(q)_*\big(e(\mu_{\pi_y(q)})\big)
\end{equation}
in $\varOmega^{-2n}_U(\varPi\?P(q)\times y)$. But $\fix(F_y)$ is
finite, so the right hand side may be replaced by $\sum_x
e(\mu(x,y))$, as $x$ ranges over $\fix(F_y)$; here $e(\mu(x,y))$
denotes the quotient $e(\mu'_y(q)|_x)/e(\mu(V_y,q)|_x)$ of
\eqref{defemupiq}, where $\mu'_y(q)|_x$ and $\mu(V_y,q)|_x)$ are the
respective decompositions of the $T^k$-bundles
$1\times_{T^k}r_{F_y}^*\nu(i_y)$ and $\varPi\?S(q)\times_{T^k}V_y\ra
\varPi\?P(q)$ into sums of complex line bundles, upon restriction to
the fixed point $x$.

The restriction of the equivariant isomorphism
$\tau(V)|_{F_y}\cong\tau(F_y)\oplus\nu(i_y)$ to $x$ respects these
decompositions, and reduces to
$\mu(V_y,q)|_x\cong\tau_x(F_y)\oplus\mu'_y(q)|_x$. Taking Euler
classes and applying Definition \ref{defsign} and \eqref{repre} then
gives
\[
  e\big(\mu(V_y,q)|_x\big)\;=\;\varsigma(x)\cdot
  e(r_{x,1}\oplus\ldots\oplus r_{x,n})\cdot
  e(\mu'_y(q)|_x)
\]
in $\varOmega_U^{2|V|}(\varPi\?P(q)\times y)$. So
$e(\mu(x,y))\cdot e(r_{x,1}\oplus\ldots\oplus
r_{x,n})=\varsigma(x)$, and \eqref{newrestrict} becomes
\[
  {\varPhi(\pi;q)|}_y\;=\;\sum_x\varsigma(x)
  \frac1{e(r_{x,1}\oplus\ldots\oplus r_{x,n})}.
\]

To complete the proof, recall from \eqref{powersys} that
\[
  e(r_{x,j})\;=\;[w_j(x)](u)
\]
in $\varOmega_U^{-2n}(\varPi\?(q))$, for $1\le j\le n$; then let
$q\ra\infty$, and sum over $y\in\fix(X)$.
\end{proof}
\begin{corollary}\label{evutg}
  For any stably tangentially complex $M^{2n}$ with
  isolated fixed points, the equation
\begin{equation}\label{lfx}
  \varPhi(M)\;=\;\sum_{\fix(M)}\!
  \varsigma(x) \prod_{j=1}^n\frac1{[w_j(x)](u)}
\end{equation}
is satisfied in $\varOmega^{-2n}_U(BT^k_+)$.
\end{corollary}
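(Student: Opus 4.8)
The plan is to obtain \eqref{lfx} as the special case $X=*$ of Theorem~\ref{evutgx}. The one-point space is a smooth compact $T^k$-manifold under the trivial action, so $\pi\colon M\to *$ is a stably tangentially complex $T^k$-bundle in the sense of Definitions~\ref{sttacotkbd}, with fibre $F=M$ and bundle of tangents along the fibre $\tau_F(E)=\tau(M)$ carrying the structure $c_\tau$. Its fixed points are automatically isolated, and the fixed point data attached to $\tau_F(E)$ in the discussion preceding Theorem~\ref{evutgx} reduce exactly to the signs $\varsigma(x)$ and weight vectors $w_j(x)$ of Definition~\ref{defsign} and~\eqref{repre}.

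First I would note that $\fix(X)=\fix(*)$ consists of the single point $*$, whose fibre $F_*$ is $M$, so that $\fix(F_*)=\fix(M)$ and the double sum on the right of~\eqref{lfxX} collapses to a single sum over $\fix(M)$. On the left, the target $\bigoplus_{\fix(X)}\varOmega^{-2n}_U(BT^k_+\times y)$ becomes $\varOmega^{-2n}_U(BT^k_+\times *)\cong\varOmega^{-2n}_U(BT^k_+)$, and the restriction $\varPhi(\pi)\big|_{\fix(X)}$ is simply $\varPhi(M)$. Substituting these identifications into~\eqref{lfxX} yields~\eqref{lfx}.

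There is essentially no obstacle here, since the transfer-theoretic and localisation content has already been extracted in Theorem~\ref{evutgx}; the only point that needs care is the bookkeeping that the fixed point data of $\tau_F(E)$ and of $(M,c_\tau)$ agree, which holds by construction. As a consistency check, when $c_\tau$ is almost complex every $\varsigma(x)=1$, so~\eqref{lfx} recovers Krichever's formula~\eqref{kf} of Proposition~\ref{krich}; and, as remarked after that proposition, cancellation of all negative-degree terms on the right of~\eqref{lfx} imposes the Conner--Floyd-type relations on the fixed point data.
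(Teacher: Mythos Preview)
Your proof is correct and follows exactly the paper's own approach: apply Theorem~\ref{evutgx} with $E=M$ and $X=*$. The bookkeeping you spell out (that $\fix(*)=\{*\}$, $F_*=M$, and $\varPhi(\pi)\big|_{\fix(*)}=\varPhi(M)$) is precisely what the paper leaves implicit.
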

\begin{proof}
Apply Theorem \ref{evutgx} to the case $E=M$ and $X=*$.
\end{proof}

\begin{remark}\label{genkrich}
If the structure $c_\tau$ is almost complex, then $\varsigma(x)=1$
for all fixed points $x$, and \eqref{lfx} reduces to Krichever's
formula \cite[(2.7)]{kric90}).
\end{remark}

The left-hand side of \eqref{lfx} lies in
$\varOmega^U_*[[u_1,\dots,u_k]]$, whereas the right-hand side appears
to belong to an appropriate localisation. It follows that all terms of
negative degree must cancel, thereby imposing substantial restrictions
on the fixed point data. These may be made explicit by rewriting
\eqref{wjxmoddec} as
\begin{equation}\label{psit}
[w_j(x)](tu)\;\equiv\;(w_{j,1}u_1+\dots +w_{j,k}u_k)t \mod (t^2)
\end{equation}
in $\varOmega^U_*[[u_1,\dots,u_k,t]]$, and then defining the formal
power series \smash{$\sum_l\cf_lt^l$} to be
\begin{equation}\label{tseries}
t^n\varPhi(M)(tu)\;=\;
\sum_{\fix(M)}\!\varsigma(x)\prod_{j=1}^n\frac{t}{[w_j(x)](tu)}
\end{equation}
over the localisation of $\varOmega^U_*[[u_1,\dots,u_k]]$.
\begin{proposition}\label{cfrels}
The coefficients $\cf_l$ are zero for $\,0\leq l<n$, and satisfy
\[
\cf_{n+m}\;=\;\sum_{|\omega|=m}g_\omega(M)u^\omega
\]
for $m\geq 0$; in particular, $\cf_n=[M]$.
\end{proposition}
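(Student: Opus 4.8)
The plan is to read the coefficients $\cf_l$ directly off the left-hand side of \eqref{tseries}, which is manifestly a power series in $t$ over the \emph{unlocalised} ring $\varOmega^U_*[[u_1,\dots,u_k]]$; the right-hand side exhibits the same coefficients only as formal quotients in the localisation, so nothing is gained by working there.

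Concretely, I would begin from the defining expansion \eqref{defgomega}, $\varPhi(M)=\sum_\omega g_\omega(M)\,u^\omega$, with $g_\omega(M)\in\varOmega^U_{2(|\omega|+n)}$. The substitution $u_j\mapsto tu_j$ is a ring homomorphism $\varOmega^U_*[[u_1,\dots,u_k]]\to\varOmega^U_*[[u_1,\dots,u_k]][[t]]$, so
\[
t^n\varPhi(M)(tu)\;=\;\sum_\omega g_\omega(M)\,t^{n+|\omega|}u^\omega
\;=\;\sum_{m\ge0}\Big(\sum_{|\omega|=m}g_\omega(M)\,u^\omega\Big)t^{n+m}.
\]
Since \eqref{tseries} \emph{defines} $\sum_l\cf_l t^l$ to be precisely this series, comparing the coefficient of $t^l$ on the two sides yields $\cf_l=0$ for $0\le l<n$ and $\cf_{n+m}=\sum_{|\omega|=m}g_\omega(M)\,u^\omega$ for all $m\ge0$. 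Taking $m=0$ leaves $\cf_n=g_0(M)$, and $g_0(M)=[M]$ in $\varOmega^U_{2n}$ by Proposition \ref{loffform}.

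The one point meriting care --- and the one in which the content of the proposition genuinely resides --- is the passage between the two sides of \eqref{tseries}. After substituting $u\mapsto tu$ and multiplying by $t^n$, that identity holds in the localisation of $\varOmega^U_*[[u_1,\dots,u_k]]$ by Corollary \ref{evutg}; its left-hand side already lies in $\varOmega^U_*[[u_1,\dots,u_k]][[t]]$ and has $t$-order exactly $n$, so the common value does too, and its $t$-coefficients are the $\cf_l$. Thus the a priori localisation-valued right-hand side of \eqref{lfx} is in fact unlocalised, which is exactly the vanishing $\cf_l=0$ for $l<n$ --- the analogue of Novikov's Conner-Floyd relations --- while the surviving coefficients reproduce L\"offler's classes $g_\omega(M)$. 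Beyond this degree bookkeeping and the appeal to Proposition \ref{loffform} for $\cf_n=[M]$, there is no obstacle.
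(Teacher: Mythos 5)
Your proof is correct and takes essentially the same approach as the paper's one-line proof, which simply instructs the reader to combine the definitions in \eqref{tseries} and \eqref{defgomega}; you have spelled out that combination explicitly, reading $\cf_l$ off the coefficient of $t^l$ in $t^n\varPhi(M)(tu)=\sum_\omega g_\omega(M)\,t^{n+|\omega|}u^\omega$, and invoked Proposition \ref{loffform} for $g_0(M)=[M]$. Your additional remark about the localisation is consistent with the paper's surrounding discussion and does not change the argument.
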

\begin{proof}
Combine the definitions of $\cf_l$ in \eqref{tseries} and
$g_\omega$ in \eqref{defgomega}.
\end{proof}
\begin{remarks}\hfill\\
\indent (1) The equations $\cf_l=0$ for $0\leq l<n$ are the
$T^k$-analogues of the \emph{Conner--Floyd relations} for
$\mathbb{Z}/p$-actions~\cite[Appendix~4]{novi67}; the extra equation
$\cf_n=[M]$ provides an expression for the cobordism class of $M$ in
terms of fixed point data. This is important because every element of
$\varOmega^U_*$ may be represented by a stably tangentially complex
$T^k$-manifold with isolated fixed points \cite{b-p-r07}.

(2) There are parametrised versions of the Conner-Floyd relations
associated to any $\varPhi(\pi)$, which arise from the individual
fibres $F_y$. As we show in Example \ref{tarassq}, they provide
necessary conditions for the existence of stably tangentially complex
$T^k$-equivariant bundles.
\end{remarks}

Theorem \ref{evutgx} applies to fibre bundles of the form
\begin{equation}\label{thg}
H/T^k\lra G/T^k\stackrel{\pi}{\lra}G/H
\end{equation}
for any compact connected Lie subgroup $H<G$ of maximal rank $k$, by
extrapolating the methods of \cite{bu-te08}. Corollary \ref{evutg}
applies to homogeneous spaces $G/H$ with nonzero Euler
characteristic, to toric and quasitoric manifolds~\cite{bu-pa02},
and to many other families of examples.

\begin{examples}\label{isfipoexas}\hfill\\
  \indent (1) For $U(1)\times U(2)<U(3)$, \eqref{thg} is the
  $T^3$-bundle $\bC P^1\ra U(3)/T^3\stackrel{\pi}{\ra}\bC P^2$; it is
  the projectivisation of $\eta^\perp$, and all three manifolds are
  complex. Moreover, $U(3)/T^3$ has 6 fixed points and $\bC P^2$ has
  3, so $\pi$ represents an element of $\varOmega^{-2}_{U:T^3}(\bC
  P^2)$, and $\varPhi(\pi)$ lies in
  $\varOmega^{-2}_U((ET^3\times_{T^3}\bC P^2)_+)$.

The 6 sets of weight vectors are singletons, given by the action
of the Weyl group $\varSigma_3$ on the roots of $U(3)$, and split
into 3 pairs, indexed by the cosets of $\varSigma_2<\varSigma_3$.
These pairs are $\pm(-1,1,0)$,\; $\pm(-1,0,1)$,\, and
$\pm(0,-1,1)$\, respectively, corresponding to tangents along the
fibres. So \eqref{lfxX} evaluates $\varPhi(\pi)|_{\fix(\bC P^2)}$
as
\[
\Big(\frac1{F(\bar u_1,u_2)}+\frac1{F(u_1,\bar u_2)}\Big)\;+\;
\Big(\frac1{F(\bar u'_1,u'_3)}+\frac1{F(u'_1,\bar u'_3)}\Big)\;+\;
\Big(\frac1{F(\bar u''_2,u''_3)}+\frac1{F(u''_2,\bar u''_3)}\Big)
\]
in $\varOmega^{-2}_U(BT^3_+)\oplus\varOmega^{-2}_U(BT^3_+)'
\oplus\varOmega^{-2}_U(BT^3_+)''$, where $F(u_1,u_2)$ denotes the
universal formal group law $[(1,1)](u_1,u_2)$, and $\bar u_j$
denotes $[-1]u_j$ for any $j$.

(2) The exceptional Lie group $G_2$ contains $\SU(3)$ as a subgroup
of maximal rank, and induces a canonical $T^2$-invariant almost
complex structure on the quotient space $S^6$. This has two fixed
points, with weight vectors $(1,0)$, $(0,1)$, $(-1,-1)$ and
$(-1,0)$, $(0,-1)$, $(1,1)$ respectively. So Corollary \ref{evutg}
evaluates $\varPhi(S^6,c_\tau)$ as
\[
\frac{1}{u_1u_2F(\bar u_1,\bar u_2)}\:+\:\frac{1}{\bar u_1\bar
  u_2F(u_1,u_2)}
\]
in $\varOmega^{-6}_U(BT^2)$. The coefficient $c\?f_0$ of Proposition
\ref{cfrels} is $(1-1)/u_1u_2(u_1+u_2)$, and is visibly zero; the
cases $c\?f_1$ and $c\?f_2$ are more intricate.
\end{examples}

Given any homotopical genus $(t^D,v)$, we may adapt both Theorem
\ref{evutgx} and Corollary \ref{evutg} to express the genus
$t^D_*\varPhi$ in terms of fixed point data and the $D$-theory Euler
class $e^D\letbe t^D_*e$. Evaluating $t^E_*\varPhi$ leads to major
simplifications, because $F_{(t^D,v)}$ may be linearised over $E_*$; the
tangential form is also useful.
\begin{proposition}\label{evpargenus}
  For any stably tangentially complex $T^k$-bundle $F\ra
  E\stackrel{\pi}{\ra}X$ with isolated fixed points,
  $t^E_*\varPhi(\pi)\big|_{\fix(X)}$ takes the value
\begin{equation}\label{hlfxX}
\sum_{\fix(E)}\varsigma(x)\prod_{j=1}^n\frac1{b^E(w_j(x)\cdot
u)} \;=\; \sum_{\fix(E)}\varsigma(x)
\prod_{j=1}^n\frac{(a^E)_+(w_j(x)\cdot u)}{w_j(x)\cdot u}
\end{equation}
in $\bigoplus_{\fix(X)}E^{-2n}(BT^k_+\times y)$.
\end{proposition}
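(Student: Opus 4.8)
The plan is to deduce Proposition~\ref{evpargenus} directly from Theorem~\ref{evutgx} by applying the ring homomorphism $t^E_*\colon\varOmega^U_*\to E_*$, extended coefficientwise to formal power series in $u_1,\dots,u_k$. Concretely, I would start from the localisation formula \eqref{lfxX}, namely
\[
  {\varPhi(\pi)\big|}_{\fix(X)}
  \;=\;\sum_{\fix(X)}\;\sum_{\fix(F_y)}
  \varsigma(x)\prod_{j=1}^n\frac1{[w_j(x)](u)},
\]
and observe that the target homomorphism sends $\varOmega^{-2n}_U(BT^k_+)$ to $E^{-2n}(BT^k_+)$ and, more to the point, carries the complex cobordism Chern class $u_i=c_1^{M\?U}(\zeta_i)$ to the $E$-theory class $x^E_i=c_1^E(\zeta_i)$. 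Since $[w_j(x)](u)$ is by construction the first cobordism Chern class $c_1^{M\?U}(\zeta^{w_j(x)})$, its image under $t^E_*$ is $c_1^E(\zeta^{w_j(x)})$, which by naturality of Chern classes and the definition of the $E$-theory formal group law equals the $E$-theory $m$-series evaluated at the $x^E_i$ — equivalently, under the change of orientation $x^E=b^E(x^H)$ linearising $F_{(t^D,v)}$ over $E_*$, it is exactly $b^E\!\bigl(w_j(x)\cdot u\bigr)$ where now $u$ denotes the $H$-theory orientation $x^H$ and $w_j(x)\cdot u=\sum_i w_{j,i}(x)\,u_i$ is the \emph{linear} form, the $m$-series having become additive.

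The key steps, in order, are: (i) record that $t^E_*$ is a ring map, commutes with the Kronecker products and Gysin maps implicit in \eqref{lfxX}, and commutes with the limit over $q$, so that applying it termwise to \eqref{lfxX} is legitimate; (ii) identify $t^E_*\bigl([w_j(x)](u)\bigr)$ with $b^E(w_j(x)\cdot u)$ using \eqref{powersys}, \eqref{bmseries}, and the fact that the exponential of $F_{(t^D,v)}$ is $b^E(x^H)$ (established in the discussion preceding \eqref{hteonhorcp}); this is where the linearisation of the formal group law over $E_*$ enters and produces the honest power series $b^E$ of a linear argument rather than an iterated $m$-series; (iii) substitute to obtain the left-hand equality in \eqref{hlfxX}; (iv) derive the right-hand equality from the normal-to-tangential conversion \eqref{xperp}, precisely $a^E_+(x^H)=1/b^E_+(x^H)$, so that $1/b^E(w_j(x)\cdot u)=(a^E)_+(w_j(x)\cdot u)/\bigl(w_j(x)\cdot u\bigr)$, exactly as in Corollary~\ref{hurhirz}. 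Summing over $\fix(E)=\bigsqcup_y\fix(F_y)$ and noting the signs $\varsigma(x)$ are integers fixed by $t^E_*$ then gives the stated identity in $\bigoplus_{\fix(X)}E^{-2n}(BT^k_+\times y)$.

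The main obstacle — really the only substantive point — is step (ii): making rigorous the claim that after applying $t^E_*$ the cobordism $w_j(x)$-series becomes $b^E$ of the \emph{linear} form $w_j(x)\cdot u$. This requires being careful about which orientation one works in. Over $\MU$ the class $[w_j(x)](u)$ is a genuinely nonlinear power series in the $u_i$, governed by the iterated universal formal group law; the point is that in $E^*(BT^k_+)$, with the orientation $x^H$ pulled back via $s^E$, the formal group law is still nonadditive, but the \emph{composite} $t^E_*\circ c_1^{M\?U}$ applied to $\zeta^{w_j(x)}=\bigotimes_i\zeta_i^{\otimes w_{j,i}(x)}$ is computed by first taking cobordism Chern classes and then pushing forward, and equals $c_1^E$ of the same line bundle, whose value is $b^E$ of $\sum_i[w_{j,i}]^{F^E}(x^H_i)$ — and one invokes the linearisation $x^E=b^E(x^H)$ precisely to rewrite $\sum_i[w_{j,i}]^{F^E}(x^H_i)$ as the additive $\sum_i w_{j,i}(x)\,x^H_i$ in the $x^H$-coordinate, since in the $x^E$-coordinate the group law is the multiplicative one only \emph{after} applying $b^E$. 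I would present this as the one-line consequence of the identity $m^E(b^E(t))=t$ combined with $[m]^{F^E}(x^E)=b^E(m\cdot m^E(x^E))$, rather than spelling out the bookkeeping, but it is the step a careful reader will want checked.
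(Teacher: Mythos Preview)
Your approach is essentially the paper's own: apply $t^E_*$ to Theorem~\ref{evutgx}, use the linearising identity $e^E(r_{x,j})=b^E(e^H(r_{x,j}))=b^E(w_j(x)\cdot u)$, and then invoke $a^E_+(x^H)=1/b^E_+(x^H)$ for the tangential form. One wording correction in your final paragraph: in the $x^H$-orientation pulled back via $s^E$ the formal group law \emph{is} additive (that is precisely why $e^H(r_{x,j})$ reduces to the linear form $w_j(x)\cdot u$), so your sentence ``the formal group law is still nonadditive'' should be deleted --- the rest of your step (ii) is then correct and in fact simpler than you make it sound.
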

\begin{proof}
  Apply the Thom class $t^E$ to Theorem \ref{evutgx}, using the
  relation $e^E=b^E(e^H)$ in $E^2(\bC P^\infty)$. Then the proof is
  completed by the linearising equations
\[
  e^E(r_{x,j})=b^E(e^H(r_{x,j}))=b^E(w_j(x)\cdot u),
\]
together with the identity $1/b_+^E(x^H)=
a^E_+(x^H)$ in $E^0(\bC P^\infty_+)$.
\end{proof}
\begin{corollary}\label{evgenus}
For any stably tangentially complex $M^{2n}$ with isolated fixed
points, the equation
\[
  t^E_*\varPhi(M)\;=\;\sum_{\fix(M)}\!
  \varsigma(x)\prod_{j=1}^n\frac1{b^E(w_j(x)\cdot u)}
\]
is satisfied in $E^{-2n}(BT^k_+)$.
\end{corollary}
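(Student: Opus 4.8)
The plan is to deduce Corollary \ref{evgenus} as the specialisation of Proposition \ref{evpargenus} to the case $E = M$, $X = *$, exactly as Corollary \ref{evutg} was deduced from Theorem \ref{evutgx}. When $X$ is a point, $\fix(X)$ is the singleton $\{*\}$, the unique fibre $F_*$ coincides with $M$, and the target $\bigoplus_{\fix(X)} E^{-2n}(BT^k_+\times y)$ collapses to $E^{-2n}(BT^k_+)$. Moreover $\varPhi(\pi)$ for the projection $M\to *$ is precisely $\varPhi(M)$, so restriction to $\fix(X)$ is the identity and the left-hand side of \eqref{hlfxX} becomes $t^E_*\varPhi(M)$. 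The first expression on the right of \eqref{hlfxX} then reads $\sum_{\fix(M)}\varsigma(x)\prod_{j=1}^n 1/b^E(w_j(x)\cdot u)$, which is the asserted identity.

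Equivalently --- and this is the same computation unwound --- one may argue straight from Corollary \ref{evutg} by applying the Thom class homomorphism $t^E_*$ to \eqref{lfx}, using the relation $e^E = b^E(e^H)$ in $E^2(\bC P^\infty)$ together with the linearising equations $e^E(r_{x,j}) = b^E(w_j(x)\cdot u)$, just as in the proof of Proposition \ref{evpargenus}. Either way the formal quotients on the right are to be read in the localisation of $E_*[[u_1,\dots,u_k]]$, with the apparent negative-degree terms cancelling because $t^E_*\varPhi(M)$ lies in $E^{-2n}(BT^k_+)$ by construction; one could also record the tangential form obtained via the identity $1/b^E_+(x^H) = a^E_+(x^H)$, although it is not needed for the statement.

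Since the substantive work --- interpreting Quillen's localisation formula \eqref{quilf}, pinning down the signs via Definition \ref{defsign}, and linearising the formal group law over $E_*$ --- has already been carried out in Theorem \ref{evutgx} and Proposition \ref{evpargenus}, there is no genuine obstacle in the corollary itself; the only thing to verify is the bookkeeping of the specialisation $X = *$, which is routine.
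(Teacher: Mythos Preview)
Your proposal is correct and matches the paper's approach: Corollary \ref{evgenus} is simply the specialisation of Proposition \ref{evpargenus} to the case $X=*$, exactly parallel to the deduction of Corollary \ref{evutg} from Theorem \ref{evutgx}. The paper offers no separate proof, treating it as an immediate consequence.
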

\begin{remarks}\label{3rems}\hfill\\
\indent (1) The corresponding analogue of \eqref{tseries} is the
series $\sum_lcf^E_lt^l$, defined by
\begin{equation}\label{htseries}
  \sum_{\fix(E)}\varsigma(x)\prod_{j=1}^n\frac{t}{b^E(tw_j(x)\cdot
  u)}\;=\; \sum_{\fix(E)}\varsigma(x) \prod_{j=1}^n
  \frac{a^E_+(tw_j(x)\cdot u)}{w_j(x)\cdot u}
\end{equation}
over the localisation of $E_*[[u_1,\dots,u_k]]$. So
$cf_n^E$ is the first nonzero coefficient, and computes
the nonequivariant genus $t^E_*(M)$. Similarly, the constant term of
\eqref{hlfxX} is $\sum_{\fix(X)}t^E_*(F_y)$, and the principal part
is $0$.

(2) The Hurewicz genus $hr(\varPhi(\pi))$ of Examples~\ref{cgenexs}(2)
is used implicitly in many of the calculations of \cite{bu-te08}, and
lies in $\bigoplus_{\fix(X)}(H\wedge\MU)^{-2n}(BT^k_+\times y)$.
\end{remarks}

\begin{example}\label{agcfrels}
  The augmentation genus $ag(\varPhi(\pi))$ of
  Example~\ref{cgenexs}~(1) may be computed directly from Theorem
  \ref{evutgx}. It lies in $\bigoplus_{\fix(X)}H^{-2n}(BT^k_+\times
  y)$, and is therefore zero for every $\pi$ of positive fibre
  dimension. It follows that
\begin{equation}\label{agzero}
\sum_{\fix(E)\!\!}\varsigma(x)\prod_{j=1}^n\frac1{w_j(x)\cdot u}\;=\;0
\end{equation}
for all $M^{2n}$, because $e^H(r_{x,j})=w_j(x)\cdot u$ in
$H^2(BT^k)$. In case $\pi\colon\bC P^n\ra *$ and $T^{n+1}$ acts on
$\bC P^n$ homogeneous coordinatewise, every fixed point $x_0$, \dots,
$x_n$ has a single nonzero coordinate. So the weight vector $w_j(x_k)$
is $e_j-e_k$ for $0\le j\neq k\le n$, and every $\varsigma(x_k)$ is
positive; thus \eqref{agzero} reduces to
\begin{equation}\label{agcpnstand}
  \sum_{k=0}^n\;\;\prod_{0\le j\neq k\le
  n}\!\frac1{u_j-u_k}\;=\;0\,.
\end{equation}
This is a classical identity, and is closely related to formulae in
\cite{h-b-j94}[\S 4.5].
\end{example}

\begin{examples}\hfill\\
  \indent (1) The formula associated to Remarks
  \ref{3rems}(2) lies behind \cite[Theorem 9]{bu-te08}. The canonical
  $T^n$-action on the complex flag manifold $U(n)/T^n$ has $n!$ fixed
  points; these include the coset of the identity, whose weight vectors
  are the roots of $U(n)$, given by $e_i-e_j$ for $1\le i<j\le n$. The
  action of the Weyl group $\varSigma_n$ permutes the coordinates, and
  yields the weight vectors of the other fixed points. All have
  positive sign. Substituting this fixed point data into \eqref{htseries}
  gives
\[
  hr(U(n)/T^n)\;=\;\sum_{\varSigma_n}
  \mathop{\mathrm{sign}}(\rho)P_{\rho\delta}(a_1,\ldots,a_m),
\]
in $(H\wedge\MU)_{n(n-1)}$, where $\delta=(n-1,n-2,\ldots,1,0)$,
$m=n(n-1)/2$, and $\rho$ ranges over $\varSigma_n$; the
$P_\omega(a_1,a_2,\dots)$ are polynomials in
$(H\wedge\MU)_{2|\omega|}$, defined by
\[
  \prod_{1\le i<j\le n}\!a_+(t(u_i-u_j))\;=\;
  1+\sum_{|\omega|>0}P_\omega(a_1,a_2,\dots)t^{|\omega|}u^\omega.
\]

(2) Similar methods apply to $(S^6,c_\tau)$ of Example
\ref{isfipoexas}(2), and show that
\[
hr(S^6,c_\tau)=2(a_1^3-3a_1a_2+3a_3)
\]
in $H_6(\MU)$, as in \cite{bu-te08}. This result may also be read off
from calculations in \cite{ray74}.
\end{examples}

%
%
%
%
%
%
%
%

\section{Quasitoric manifolds}\label{quma}

As shown in~\cite{bu-ra01}, Davis and Januszkiewicz's
\emph{quasitoric manifolds} \cite{da-ja91} provide a rich supply of
stably tangentially complex $T^n$-manifolds $(M^{2n},a,c_\tau)$ with
isolated fixed points. In this section we review the basic theory,
and highlight the presentation of an \emph{omnioriented} quasitoric
manifold in purely combinatorial terms. Our main goal is to compute
the fixed point data directly from this presentation, and therefore
to evaluate the genus $\varPhi(M)$ combinatorially by applying the
formulae of Section \ref{isfipo}. We also describe the
simplifications that arise in case $M$ is a non-singular projective
\emph{toric variety}. Our main source of background information is
\cite{b-p-r07}, to which we refer readers for additional details.

An $n$-dimensional convex polytope $P$ is the bounded intersection of
$m$ irredundant halfspaces in $\bR^n$. The bounding hyperplanes $H_1$,
\dots, $H_m$ meet $P$ in its \emph{facets} $F_1$, \dots, $F_m$, and
$P$ is \emph{simple} when every face of codimension $k$ may be
expressed as $F_{j_1}\cap\dots\cap F_{j_k}$, for $1\leq k\leq n$. In
this case, the facets of $P$ are \emph{finely ordered} by insisting that
$F_1\cap\dots\cap F_n$ defines the \emph{initial vertex} $v_1$.

Two polytopes are \emph{combinatorially equivalent} whenever their
face posets are isomorphic; the corresponding equivalence classes
are known as \emph{combinatorial polytopes}. At first sight, several
of our constructions depend upon an explicit realisation of $P$ as a
convex subset of $\bR^n$. In fact, they deliver equivalent output
for combinatorially equivalent input, so it suffices to interpret
$P$ as combinatorial whilst continuing to work with a geometric
representative when convenient. For further details on this point,
we refer to \cite{bu-pa02} and \cite[Corollary~4.7]{bo-me06}, for
example. The facets of a combinatorial simple polytope $P$ may also
be finely ordered, and an \emph{orientation} is then an equivalence
class of permutations of $F_1$, \dots, $F_n$.

There is a canonical affine transformation $i_P\colon P\ra\brpm$
into the positive orthant of $\bR^m$, which maps a point of $P$ to
its $m$-vector of distances from the hyperplanes $H_1$, \dots,
$H_m$. It is an embedding of manifolds with corners, and is
specified by $i_P(x)=A_Px+b_P$, where $A_P$ is the $m\times n$
matrix of inward pointing unit normals to $F_1$, \dots, $F_m$. The
\emph{moment-angle manifold} $\zp$ is defined as the pullback
\begin{equation}\label{cdiz}
\begin{CD}
  \zp @>i_Z>>\bC^m\\
  @V\mu_P VV\hspace{-0.2em} @VV\mu V @.\\
  P @>i_P>>\brpm
\end{CD}\quad,
\end{equation}
where $\mu(z_1,\ldots,z_m)$ is given by $(|z_1|^2,\ldots,|z_m|^2)$.
The vertical maps are projections onto the $T^m$-orbit spaces, and
$i_Z$ is a $T^m$-equivariant embedding. It follows from Diagram
\eqref{cdiz} that $\zp$ may be expressed as a complete intersection
of $m-n$ real quadratic hypersurfaces in $\bC^m$~\cite{b-p-r07}, so it
is smooth and equivariantly framed. We therefore acquire a
$T^m$-equivariant isomorphism
\begin{equation}\label{tauzp}
  \tau(\zp)\oplus\nu(i_Z)\stackrel{\cong}{\llra}\zp\times\bC^m,
\end{equation}
where $\tau(\zp)$ is the tangent bundle of $\zp$ and $\nu(i_Z)$ is
the normal bundle of $i_Z$. Construction \eqref{cdiz} was
anticipated by work of L\'opez de Medrano \cite{lodeme89}.

It is important to describe the action of $T^m$ on $\zp$ in more
detail. For any vector $z$ in $\bC^m$, let $I(z)\subseteq [m]$
denote the set of subscripts $j$ for which the coordinate $z_j$ is
zero; and for any point $p$ in $P$, let $I(p)\subseteq [m]$ denote
the set of subscripts for which $p$ lies in $F_j$. The isotropy
subgroup of $z$ under the standard action on $\bC^m$ is the
\emph{coordinate subgroup} $T_{I(z)}\leq T^m$, consisting of those
elements $(t_1,\dots,t_m)$ for which $t_j=1$ unless $j$ lies in
$I(z)$. We deduce that the isotropy subgroup of $z$ in $\zp$ is
$T_z=T_{I(\mu_P(z))}\leq T^m$, using the definitions of $i_P$ and
$\mu$.
\begin{definition}\label{data}
A {\it combinatorial quasitoric pair} $(P,\varLambda)$ consists of
an oriented combinatorial simple polytope $P$ with finely ordered
facets, and an integral $n\times m$ matrix $\varLambda$, whose
columns $\lambda_i$ satisfy
\begin{enumerate}
\item
$(\lambda_1\;\dots\;\lambda_n)$ is the identity submatrix $I_n$
\item
$\det(\lambda_{j_1}\;\dots\;\lambda_{j_n})=\pm 1$ whenever
$F_{j_1}\cap\dots\cap F_{j_n}$ is a vertex of $P$.
\end{enumerate}
A matrix obeying (1) and (2) is called \emph{refined}, and (2) is
\emph{Condition}~$(*)$ of~\cite{da-ja91}.
\end{definition}

Given a combinatorial quasitoric pair, we use the matrix
$\varLambda$ to construct a quasitoric manifold
$M=M^{2n}(P,\varLambda)$ from $\zp$. The first step is to
interpret the matrix as an epimorphism $\varLambda\colon T^m\ra
T^n$, whose kernel we write as $K=K(\varLambda)$. The latter is
isomorphic to $T^{m-n}$ by virtue of Condition $(*)$. Furthermore,
$\varLambda$ restricts monomorphically to any of the isotropy
subgroups $T_z<T^m$, for $z\in\zp$; therefore $K\cap T_z$ is
trivial, and $K<T^m$ acts freely on $\zp$. This action is also
smooth, so the orbit space $M=M(P,\varLambda)\letbe\zp/K$ is a
$2n$--dimensional smooth manifold, equipped with a smooth action
$a$ of the quotient $n$-torus $T^n\cong T^m/K$. By construction,
$\mu_P$ induces a projection $\pi\colon M\to P$, whose fibres are
the orbits of~$a$. Davis and Januszkiewicz \cite{da-ja91} also
show that $a$ is \emph{locally standard}, in the sense that it is
locally isomorphic to the coordinatewise action of $T^n$ on
$\bC^n$.

In fact the pair $(P,\varLambda)$ invests $M$ with additional
structure, obtained by factoring out the decomposition \eqref{tauzp}
by the action of $K$. We obtain an isomorphism
\begin{equation}\label{taum}
  \tau(M)\oplus\left(\xi/K\right)\oplus
  \left(\nu(i_Z)/K\right)\stackrel{\cong}{\llra}
  \zp\times_{K}\bC^m
\end{equation}
of real $2m$--plane bundles, where $\xi$ denotes the $(m-n)$--plane
bundle of tangents along the fibres of the principal $K$-bundle
$\zp\to M$. The right hand side of \eqref{taum} is isomorphic to a
sum $\bigoplus_{i=1}^m\rho_i$ of complex line bundles, where
$\rho_i$ is defined by the projection
\[
  \zp\times_K\bC_i\lra M
\]
associated to the action of $K$ on the $i$th coordinate subspace.
Both $\xi$ and $\nu(i_Z)$ are equivariantly trivial over $\zp$, so
$\xi/K$ and $\nu(i_Z)/K$ are also trivial; explicit framings are
given in \cite[Proposition 4.5]{b-p-r07}. Hence \eqref{taum}
reduces to an isomorphism
\begin{equation}\label{tansumrho}
  \tau(M)\oplus\bR^{2(m-n)}
  \stackrel\cong\llra\rho_1\oplus\ldots\oplus\rho_m\,,
\end{equation}
which specifies a canonical $T^n$-equivariant stably tangentially
complex structure $c_\tau=c_\tau(P,\varLambda)$ on $M$. The
underlying smooth structure is that described above.

In order to make an unambiguous choice of isomorphism
\eqref{tansumrho}, we must also fix an orientation of $M$. This is
determined by substituting the given orientation of $P$ into the
decomposition
\begin{equation}\label{orientM}
  \tau_{(p,1)}(M)\;\cong\;\tau_p(P)\oplus\tau_1(T^n)
\end{equation}
of the tangent space at $(p,1)\in M$, where $p$ lies in the interior
of $P$ and $1\in T^n$ is the identity element. Together with the
orientations underlying the complex structures on the bundles
$\rho_i$, we obtain a sequence of $m+1$ orientations; following
\cite{b-p-r07}, we call this the \emph{omniorientation} of $M$
\emph{induced by} $(P,\varLambda)$.

As real bundles, the $\rho_i$ have an alternative description in
terms of the \emph{facial} (or \emph{characteristic})
\emph{submanifolds} $M_i\subset M$, defined by $\pi^{-1}(F_i)$ for
$1\leq i\leq m$. With respect to the action $a$, the isotropy
subgroup of $M_i$ is the subcircle of $T^n$ defined by the column
$\lambda_i$. Every $M_i$ embeds with codimension $2$, and has an
orientable normal bundle $\nu_i$ that is isomorphic to the
restriction of $\rho_i$; furthermore, $\rho_i$ is trivial over
$M\setminus M_i$. So an omniorientation may be interpreted as a
sequence of orientations for the manifolds $M$, $M_1$, \dots, $M_m$.
\begin{definition}\label{oqmofpair}
The stably tangentially complex $T^n$-manifold $M(P,\varLambda)$ is
the omnioriented quasitoric manifold \emph{corresponding to the
pair} $(P,\varLambda)$. Both $c_\tau$ and its underlying smooth
structure are \emph{canonical}; if the first Chern class of $c_\tau$
vanishes, then $(P,\varLambda)$ and its induced omniorientation are
\emph{special}.
\end{definition}
\begin{remarks}\hfill\\
\indent (1) Although the smooth $T^m$-structure on $\zp$ is unique,
we do not know whether the same is true for the canonical smooth
structure on $M$.

(2) Reversing the orientation of $P$ acts by negating the complex
structure $c_\tau$, whereas negating the column $\lambda_i$ acts by
conjugating the complex line bundle $\rho_i$, for $1\leq i\leq m$.
None of these operations affects the validity of Condition $(*)$ for
$\varLambda$.
\end{remarks}

\begin{example}\label{toricvar}
  By applying an appropriate affine transformation to any simple
  polytope $P\subset\bR^n$, it is possible to locate its initial
  vertex at the origin, and realise the inward pointing normal vectors
  to $F_1$, \dots, $F_n$ as the standard basis.
  A small perturbation of the defining inequalities then ensures that
  the normals to the remaining $m-n$ facets are integral, without
  changing the combinatorial type of~$P$.  Any such sequence of normal
  vectors form the columns of an integral $n\times m$ matrix $N(P)$,
  whose transpose reduces to $A_P$ of \eqref{cdiz} after normalising
  the rows. For certain \emph{unsupportive} $P$ (such as the dual of a
  cyclic polytope with many vertices \cite[Nonexamples
  1.22]{da-ja91}), $N(P)$ can never satisfy Condition $(*)$. On the
  other hand, if $P$ is \emph{supportive} then $(P,N(P))$ is a
  combinatorial quasitoric pair, and the corresponding quasitoric
  manifold is a \emph{non-singular projective toric
    variety}~\cite{b-p-r07}.

  The most straightforward example is provided by the $n$-simplex
  $P=\varDelta$, whose standard embedding in $\bR^n$ gives rise to the
  $n\times(n+1)$ matrix $N(\varDelta)=(I_n:-1)$, where $-1$ denotes
  the column vector $(-1,\dots,-1)^t$. The corresponding quasitoric
  manifold $M(\varDelta,N(\varDelta))$ is the complex projective
  space $\bC P^n$, equipped with the stabilisation of the standard
  complex structure $\tau(\bC P^n)\oplus\bC\cong\oplus^{n+1}\zeta$,
  and the standard $T^n$-action
  $t\cdot[z_0,\dots,z_n]=[z_0,t_1z_1,\dots,t_nz_n]$.
\end{example}

We now reverse the above procedures, and start with an omnioriented
quasitoric manifold $(M^{2n},a)$. By definition, there is a
projection $\pi\colon M\to P$ onto a simple $n$-dimensional polytope
$P=P(M)$, which is oriented by the underlying orientation of $M$. We
continue to denote the facets of $P$ by $F_1$, \dots, $F_m$, so that
the facial submanifolds $M_i\subset M$ are given by $\pi^{-1}(F_i)$
for $1\le i\le m$, and the corresponding normal bundles $\nu_i$ are
oriented by the omniorientation of $M$. The isotropy subcircles
$T(M_i)<T^n$ are therefore specified unambiguously, and determine
$m$ primitive vectors in $\bZ^n$; these form the columns of an
$n\times m$ matrix $\varLambda'$, which automatically satisfies
Condition $(*)$. However, condition $(1)$ of Definition \eqref{data}
can only hold if the $F_i$ are finely ordered, in which case the
matrix $\varLambda(M)\letbe L\varLambda'$ is refined, for an
appropriate choice of $L$ in $GL(n,\bZ)$. So $(P(M),\varLambda(M))$ is a
quasitoric combinatorial pair, which we call the \emph{combinatorial
data} associated to $M$.

The omnioriented quasitoric manifold of the pair
$(P(M),\varLambda(M))$ is \emph{equivalent} to $(M,a)$ in the
sense of \cite{da-ja91}, being $T^n$-equivariantly homeomorphic up
to an automorphism of $T^n$. The automorphism is specified by the
choice of $L$ above. On the other hand, the combinatorial data
associated to $M(P,\varLambda)$ is $(P,\varLambda)$ itself, unless
an alternative fine ordering is chosen. In that case, the new
ordering differs from the old by a permutation $\sigma$, with the
property that $F_{\sigma^{-1}(1)}\cap\dots\cap F_{\sigma^{-1}(n)}$
is non-empty. If we interpret $\sigma$ as an $m\times m$
permutation matrix $\varSigma$, then the resulting combinatorial
data is $(P',L\varLambda\hspace{.5pt}\varSigma)$, which we deem to
be \emph{coincident with} $(P,\varLambda)$. We may summarise this
analysis as follows.
\begin{theorem}\label{corre}
  \daja\ equivalence classes of omnioriented quasitoric manifolds
  $(M,a)$ correspond bijectively to coincidence classes of combinatorial
  quasitoric pairs $(P,\varLambda)$.
\end{theorem}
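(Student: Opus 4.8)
The plan is to present the two constructions already set up in the preceding discussion as mutually inverse bijections, after checking that each descends to the relevant quotient set. Call them $\mathcal M$ and $\mathcal D$: by Definition~\ref{oqmofpair}, $\mathcal M$ sends a combinatorial quasitoric pair $(P,\varLambda)$ to the omnioriented quasitoric manifold $M(P,\varLambda)$, obtained by factoring the decomposition \eqref{tauzp} through the free action of $K=K(\varLambda)$ on $\zp$; and $\mathcal D$ sends an omnioriented quasitoric manifold $(M,a)$ to the combinatorial data $(P(M),\varLambda(M))$, after choosing a fine ordering of the facets of the orbit polytope $P(M)$ and a matrix $L\in GL(n,\bZ)$ restoring condition~(1) of Definition~\ref{data}.

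First I would check that $\mathcal D$ is constant on \daja\ equivalence classes and takes values in coincidence classes. A \daja\ equivalence is a $T^n$-equivariant homeomorphism up to an automorphism of $T^n$; such a map carries the orbit projection, the facial submanifolds $M_i$, and their oriented isotropy circles of one manifold to those of the other, so it induces a combinatorial equivalence $P(M)\cong P(M')$ and identifies the characteristic matrices modulo the $GL(n,\bZ)$-matrix realising the torus automorphism together with a permutation of facets — that is, the output pairs are coincident. The same holds for two different choices of fine ordering or of $L$ for a fixed $M$, since by construction these replace $(P,\varLambda)$ by $(P',L\varLambda\varSigma)$. Dually, I would check that $\mathcal M$ sends coincident pairs to \daja\ equivalent manifolds: permuting the coordinates of $\bC^m$ by $\varSigma$ and post-composing the epimorphism $\varLambda\colon T^m\to T^n$ with the automorphism $L$ carries $\zp$, the subgroup $K$, and its free action to those of the coincident pair, inducing a $T^n$-equivariant homeomorphism of quotients that respects \eqref{orientM} and the complex structures on the $\rho_i$, hence the omniorientations.

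Next I would show the composites are identities on the respective classes. Given $(P,\varLambda)$, the orbit polytope of $M(P,\varLambda)$ is $P$ by construction and its facial submanifolds recover the columns $\lambda_i$ with their signs up to the freedom of fine ordering, so $\mathcal D(\mathcal M(P,\varLambda))$ is coincident with $(P,\varLambda)$ — this is exactly the observation made just before the statement. Given $(M,a)$, the manifold $M(P(M),\varLambda(M))$ is \daja\ equivalent to $(M,a)$; this is the reconstruction result of Davis and Januszkiewicz quoted in the text, the residual torus automorphism being the one prescribed by $L$. Combining the two paragraphs, $\mathcal M$ and $\mathcal D$ descend to inverse bijections between coincidence classes of pairs and \daja\ equivalence classes of omnioriented quasitoric manifolds.

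The main obstacle is the orientation bookkeeping needed to pin down that \emph{coincidence}, as defined in the excerpt, captures precisely the ambiguity in the two constructions — neither more nor less. Concretely, one must verify that each coincidence move (combinatorial equivalence of $P$, change of $\varLambda$ by $GL(n,\bZ)$ forced by renormalising to condition~(1), and re-ordering of facets by a $\sigma$ for which $F_{\sigma^{-1}(1)}\cap\dots\cap F_{\sigma^{-1}(n)}$ stays a vertex) is realised by a homeomorphism carrying the full omniorientation — all $m+1$ constituent orientations, namely that of $M$ through \eqref{orientM} together with those of the normal bundles $\nu_i\cong\rho_i|_{M_i}$ — to the omniorientation, handling in particular the sign of $\det L$ against the freedom to choose the $T^n$-automorphism; and, conversely, that the genuinely distinct manifolds produced by the orientation reversals discussed in the Remarks following Definition~\ref{oqmofpair} do not arise from coincident pairs. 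Everything else is a routine unwinding of Definitions~\ref{data} and~\ref{oqmofpair}.
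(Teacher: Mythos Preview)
Your proposal is correct and follows essentially the same route as the paper: the theorem is stated there as a summary of the preceding two paragraphs, which set up the mutually inverse constructions you call $\mathcal M$ and $\mathcal D$, verify $\mathcal M\circ\mathcal D\sim\mathrm{id}$ by citing Davis--Januszkiewicz's reconstruction result, and verify $\mathcal D\circ\mathcal M\sim\mathrm{id}$ by defining coincidence precisely so as to absorb the ambiguity in fine ordering and the choice of $L$. Your account is simply a more explicit unwinding of that discussion, with the orientation bookkeeping flagged as the only point requiring care.
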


We are now in a position to fulfill our promise of expressing
equivariant properties of $M$ in terms of its combinatorial data
$(P,\varLambda)$. Every fixed point $x$ takes the form
$M_{j_1}\cap\dots\cap M_{j_n}$ for some sequence $j_1<\dots<j_n$,
and it will be convenient to write $\varLambda_x$ for the $n\times
n$ submatrix $(\lambda_{j_1} \dots \lambda_{j_n})$.
\begin{proposition}\label{combdesc1}
An omniorientation of $M$ is special if and only if the column sums
of $\varLambda$ satisfy $\sum_{i=1}^n\lambda_{i,j}=1$, for every
$1\leq j\leq m$.
\end{proposition}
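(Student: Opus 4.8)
The plan is to compute the first Chern class $c_1(c_\tau)$ explicitly from the stably complex structure \eqref{tansumrho}, and then to recognise the condition $c_1(c_\tau)=0$ inside the \daja\ presentation of $H^2(M;\bZ)$, where it becomes an elementary statement about the columns of $\varLambda$.

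First I would observe that \eqref{tansumrho} exhibits $c_\tau$ as the stable complex structure underlying $\rho_1\oplus\dots\oplus\rho_m$, so that
\[
  c_1(c_\tau)\;=\;\sum_{i=1}^m c_1(\rho_i)\;=\;\sum_{i=1}^m v_i
\]
in $H^2(M;\bZ)$, where $v_i\letbe c_1(\rho_i)$; since $\rho_i$ restricts to the oriented normal bundle $\nu_i$ over the facial submanifold $M_i$ and is trivial over $M\setminus M_i$, the class $v_i$ is Poincar\'e dual to $[M_i]$, which is the customary ring generator. Next I would invoke the \daja\ calculation \cite{da-ja91} (see also \cite{b-p-r07}, \cite{bu-pa02}) giving
\[
  H^*(M;\bZ)\;\cong\;\bZ[v_1,\dots,v_m]/(\mathcal{I}_P+\mathcal{J}_\varLambda),
\]
where $\mathcal{I}_P$ is the Stanley--Reisner ideal of $P$ and $\mathcal{J}_\varLambda$ is generated by the $n$ linear forms $\theta_r\letbe\sum_{i=1}^m\lambda_{r,i}v_i$, with $\lambda_{r,i}$ the entry of $\varLambda$ in row $r$ and column $i$. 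Because $\mathcal{I}_P$ is generated by monomials of $v$-degree at least $2$, it contributes nothing in cohomological degree $2$, so $H^2(M;\bZ)$ is the free abelian group on $v_1,\dots,v_m$ modulo $\theta_1=\dots=\theta_n=0$ alone.

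The remaining argument is then purely formal. By Definition \ref{data}(1) the first $n$ columns of $\varLambda$ form $I_n$, so the relation $\theta_r=0$ reads $v_r=-\sum_{i=n+1}^m\lambda_{r,i}v_i$ for $1\le r\le n$; in particular $v_{n+1},\dots,v_m$ is a $\bZ$-basis of $H^2(M;\bZ)$. Substituting into the expression for $c_1(c_\tau)$ yields
\[
  c_1(c_\tau)\;=\;\sum_{i=n+1}^m\Bigl(1-\sum_{r=1}^n\lambda_{r,i}\Bigr)v_i ,
\]
and, since the $v_i$ appearing on the right form a basis, this vanishes if and only if the column sum $\sum_{r=1}^n\lambda_{r,i}$ equals $1$ for every $n+1\le i\le m$. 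The first $n$ columns already have column sum $1$, so the vanishing of $c_1(c_\tau)$ — i.e.\ speciality of $(P,\varLambda)$ — is equivalent to every column of $\varLambda$ summing to $1$, which is the stated criterion.

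The only input beyond bookkeeping is the \daja\ cohomology ring presentation, which I treat as a standing prerequisite. The point requiring care is that it must be used \emph{integrally}: one needs $H^2(M;\bZ)$ to be exactly the quotient of the free group on the $v_i$ by the $\theta_r$, so that the vanishing of $c_1(c_\tau)$ can be tested coefficient by coefficient rather than merely modulo torsion, and it is precisely here that the hypothesis that $\varLambda$ is refined (its first $n$ columns forming $I_n$) enters.
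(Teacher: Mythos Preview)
Your argument is correct and essentially identical to the paper's own proof: both compute $c_1(c_\tau)=\sum_i c_1(\rho_i)$ from \eqref{tansumrho}, invoke the Davis--Januszkiewicz description of $H^2(M;\bZ)$ with its $n$ linear relations $\varLambda u=0$, and then perform the same elimination to read off the column-sum condition. The only cosmetic difference is that the paper subtracts the summed relations from $\sum_j u_j$ whereas you substitute for $v_1,\dots,v_n$ first, which amounts to the same linear algebra.
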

\begin{proof}
By \cite[Theorem~4.8]{da-ja91}, $H^2(M;\bZ)$ is the free abelian
group generated by elements $u_j\letbe c_1(\rho_j)$ for $1\leq
j\leq m$, with $n$ relations $\varLambda u=0$. These give
\[
u_i+\sum_{j=n+1}^m\!\lambda_{i,j}u_j\;=\;0
\]
for $1\leq i\leq n$, whose sum is
\begin{equation}\label{suhelp1}
\sum_{i=1}^nu_i+
\sum_{j=n+1}^m\!\bigg(\sum_{i=1}^n\lambda_{i,j}\bigg)u_j\;=\;0\,.
\end{equation}
Furthermore, \eqref{tansumrho} implies that
$c_1(c_\tau)=\sum_{j=1}^mu_j$, so the omniorientation is special if
and only if $\sum_{j=1}^mu_j=0$. Subtracting \eqref{suhelp1} gives the
required formulae.
\end{proof}

Before proceeding to the fixed point data for $(M,a)$, we reconcile
Definition~\ref{defsign} with notions of sign that appear in earlier
works on toric topology. For any fixed point $x$ the decomposition
\eqref{repre} arises by splitting $\tau_x(M)$ into subspaces normal
to the $M_{j_k}$, leading to an isomorphism
\begin{equation}\label{2orie}
  \tau_x(M)\;=\;(\nu_{j_1}\oplus\dots\oplus\nu_{j_n})|_x
  \;\cong\;(\rho_{j_1}\oplus\dots\oplus\rho_{j_n})|_x\,.
\end{equation}
\begin{lemma}\label{qts}
  The sign of any fixed point $x\in M$ is given by: $\varsigma(x)=1$
  if the orientation of $\tau_x(M)$ induced by the orientation of $M$
  coincides with that induced by the orientations of
  the $\rho_{j_k}$ in \eqref{2orie}, and by $\varsigma(x)=-1$ otherwise.
\end{lemma}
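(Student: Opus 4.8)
The plan is to evaluate the composite of Definition~\ref{defsign} at a fixed point $x=M_{j_1}\cap\dots\cap M_{j_n}$ by feeding it the explicit stably complex structure \eqref{tansumrho} together with the normal-bundle splitting \eqref{2orie} of $\tau_x(M)$; once the summands are bookkept correctly the statement should fall out by inspection.

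First I would take \eqref{tansumrho} as the chosen representative for $c_\tau$, so that in \eqref{ctaupi} we have $l=m$ and $\xi=\rho_1\oplus\dots\oplus\rho_m$; since $c_\tau$ is $T^n$-equivariant, restriction to $x$ gives an equivariant isomorphism $c_{\tau,x}\colon\tau_x(M)\oplus\mathbb{R}^{2(m-n)}\to\bigoplus_{i=1}^m\rho_i|_x$. Now $x$ lies on $M_i$ exactly when $i\in\{j_1,\dots,j_n\}$, and $\rho_i$ is trivial over $M\setminus M_i$; hence $\rho_i|_x$ is a trivial $T^n$-representation for $i\notin\{j_1,\dots,j_n\}$, whereas each $\rho_{j_k}|_x\cong\nu_{j_k}|_x$ is the normal representation to $M_{j_k}$ at $x$ and is non-trivial (the isotropy circle of $M_{j_k}$ acts on it non-trivially). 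Because $x$ is isolated, $\tau_x(M)$ carries no trivial $T^n$-summand; comparing the isotypic decompositions of the two sides of $c_{\tau,x}$ then forces $c_{\tau,x}$ to carry $\mathbb{R}^{2(m-n)}$ onto the trivial part $\bigoplus_{i\notin\{j_1,\dots,j_n\}}\rho_i|_x=\mathbb{C}^{m-n}$ and $\tau_x(M)$ isomorphically onto the non-trivial part $\bigoplus_{k=1}^n\rho_{j_k}|_x=\mathbb{C}^n$ --- this being exactly the splitting \eqref{repre} of the preceding discussion. Consequently the composite of Definition~\ref{defsign} is simply $c_{\tau,x}|_{\tau_x(M)}\colon\tau_x(M)\to\bigoplus_{k=1}^n\rho_{j_k}|_x$.

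Next I would compare $c_{\tau,x}|_{\tau_x(M)}$ with the isomorphism in \eqref{2orie}. Both are $T^n$-equivariant isomorphisms $\tau_x(M)\to\bigoplus_{k=1}^n\rho_{j_k}|_x$, so they differ by a $T^n$-equivariant $\mathbb{R}$-linear automorphism of $\bigoplus_k\rho_{j_k}|_x$; since this representation has no trivial summand, such an automorphism is complex linear on each isotypic component (an $\mathbb{R}$-linear map commuting with a non-trivial circle action is $\mathbb{C}$-linear), hence has positive real determinant and is orientation preserving. Therefore $c_{\tau,x}|_{\tau_x(M)}$ and the isomorphism \eqref{2orie} pull the complex orientation of $\bigoplus_k\rho_{j_k}|_x$ back to one and the same orientation of $\tau_x(M)$, namely the one induced by the orientations of the $\rho_{j_k}$. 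Reading off Definition~\ref{defsign}, whose canonical orientations are that of $M$ on the source $\tau_x(M)$ and the complex orientation on $\mathbb{C}^n=\bigoplus_k\rho_{j_k}|_x$, we conclude that $\varsigma(x)=1$ precisely when the orientation of $\tau_x(M)$ induced by that of $M$ coincides with the one induced by the $\rho_{j_k}$ in \eqref{2orie}, which is the assertion.

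The step needing most care is the reduction of the composite to $c_{\tau,x}|_{\tau_x(M)}$ in the second paragraph. It hinges on choosing an equivariant representative for $c_\tau$ (available since the canonical structure is equivariant), on correctly matching the trivial versus non-trivial summands of $\xi_x$ with the $\rho_i|_x$ according to whether the facial submanifold $M_i$ passes through $x$ --- using that $\rho_i$ is trivial off $M_i$ and that a normal representation is non-trivial --- and on the isolatedness of $x$, which rules out trivial summands in $\tau_x(M)$ and so pins down the isotypic matching uniquely. Condition~$(*)$ underlies this picture throughout, guaranteeing that $M(P,\varLambda)$ has the stated fixed-point structure in the first place.
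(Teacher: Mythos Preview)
Your proof is correct and follows essentially the same approach as the paper: identify the composite of Definition~\ref{defsign} with the isomorphism \eqref{2orie} by exploiting the triviality of $\rho_i$ off $M_i$ and the equivariance of $c_\tau$. You supply details the paper leaves implicit, notably the isotypic bookkeeping that forces $c_{\tau,x}$ to be block diagonal, and the observation that any two equivariant isomorphisms $\tau_x(M)\to\bigoplus_k\rho_{j_k}|_x$ differ by an orientation-preserving automorphism; the paper simply asserts that the composite ``agrees with'' \eqref{2orie}.
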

\begin{proof}
By construction, the isomorphism $\nu_j\cong\rho_j$ is of
$T^n$-equivariant bundles for any $1\leq j\leq m$, and the
orientation of both is induced by the action of the isotropy
subcircle specified by $\lambda_j$. Moreover, $\rho_j$ is trivial
over $M\setminus M_j$, so $T^n$ acts non-trivially only on the
summand $(\rho_{j_1}\oplus\dots\oplus\rho_{j_n})|_x$. It follows
that the composite map of Definition \ref{defsign} agrees with the
isomorphism
\begin{equation}\label{ormap}
  \tau_x M\stackrel{\cong}{\lra}(\rho_{j_1}\oplus\ldots\oplus\rho_{j_n})|_x
\end{equation}
of \eqref{2orie}, and that $\varsigma(x)$ takes values as claimed.
\end{proof}
Lemma \ref{qts} confirms that Definition~\ref{defsign} is
equivalent to those of \cite[\S5]{b-p-r07}, \cite{dobr01},
\cite[\S4]{masu99}, and \cite{pano01} in the case of quasitoric
manifolds.
\begin{theorem}\label{signswtscomb}
  For any quasitoric manifold $M$ with combinatorial data
  $(P,\varLambda)$ and fixed point $x$, let $N(P)_x$ be a matrix of
  column vectors normal to $F_{j_1}$, \dots, $F_{j_n}$, and $W_x$ be
  the matrix determined by $W_x^t\varLambda_x=I_n$; then
\begin{enumerate}
\item
the sign $\varsigma(x)$ is given by $\mathop{\rm
sign}\big(\det(\varLambda_x\hspace{.2pt}N(P)_x)\big)$
\item the weight vectors $w_1(x)$, \dots $w_n(x)$ are the columns of
$W_x$.
\end{enumerate}
\end{theorem}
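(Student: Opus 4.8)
The strategy is to compute both the orientation comparison (part (1)) and the weight vectors (part (2)) directly from the description of the stably complex structure $c_\tau$ in \eqref{tansumrho}, using the translation from $\varLambda$ to $N(P)$ that is built into the construction of $M(P,\varLambda)$. Recall that near a fixed point $x = M_{j_1}\cap\dots\cap M_{j_n}$ the action $a$ is locally standard, so $\tau_x(M)$ decomposes $T^n$-equivariantly as $\rho_{j_1}|_x\oplus\dots\oplus\rho_{j_n}|_x$, and the isotropy circle acting on $\rho_{j_k}|_x$ is the one determined by the column $\lambda_{j_k}$. I would first make this last statement quantitative: the weight vector of the one-dimensional representation $\rho_{j_k}|_x$, expressed in the coordinates of the quotient torus $T^n\cong T^m/K$ that is actually acting on $M$, is the unique vector $w_k(x)\in\bZ^n$ satisfying the duality relation $w_k(x)\cdot\lambda_{j_l} = \delta_{kl}$. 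This is because the local chart realises $\tau_x(M)$ as $\bC^n$ with $T^n$ acting via the inverse transpose of $\varLambda_x$ (the $\rho_i$ are quotients of coordinate subspaces of $\bC^m$ by $K$, and $\varLambda$ sends the $j$th coordinate circle of $T^m$ to the circle $\lambda_j$ in $T^n$). Writing the $n$ relations $w_k(x)\cdot\lambda_{j_l}=\delta_{kl}$ as a single matrix equation gives exactly $W_x^t\varLambda_x = I_n$, which is the defining relation for $W_x$ in the statement, and establishes part (2).

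For part (1), I would invoke Lemma \ref{qts}, which reduces $\varsigma(x)$ to comparing two orientations of $\tau_x(M)$: the one coming from the orientation of $M$ (equivalently, from the orientation of $P$ via \eqref{orientM}), and the one coming from the complex structures on $\rho_{j_1}|_x,\dots,\rho_{j_n}|_x$. The orientation of $P$ at the vertex $v=F_{j_1}\cap\dots\cap F_{j_n}$ is, up to the fixed global sign determined by the fine ordering, read off from the inward normal vectors to the facets meeting at $v$, i.e.\ from the columns of $N(P)_x$; concretely, the orientation of $\tau_x(M)\cong\tau_v(P)\oplus\tau_1(T^n)$ is governed by the sign of $\det N(P)_x$ relative to the standard frame. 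The complex orientation on $\rho_{j_1}|_x\oplus\dots\oplus\rho_{j_n}|_x$, on the other hand, is governed — via the identification of this space with $\bC^n$ carrying the $T^n$-action through $\varLambda_x$ — by the orientation class associated to $\varLambda_x$ itself. Comparing the two frames amounts to computing the sign of $\det\bigl(\varLambda_x^{-1}N(P)_x\bigr)$, equivalently $\operatorname{sign}\bigl(\det(\varLambda_x)\det(N(P)_x)\bigr)$; since $\det\varLambda_x=\pm1$ this equals $\operatorname{sign}\bigl(\det(\varLambda_x N(P)_x)\bigr)$, which is the claimed formula. The passage from $\det(\varLambda_x^{-1}N(P)_x)$ to $\det(\varLambda_x N(P)_x)$ should be spelled out, but it is immediate from $\det(\varLambda_x)^2=1$.

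The main obstacle is bookkeeping of signs and of which torus is acting: one must be careful that the global orientation conventions (the fine ordering of the facets, the orientation of $P$, the identification $T^n\cong T^m/K$, and the direction of the characteristic circles) all cancel consistently, so that the \emph{local} comparison at $x$ genuinely produces $\operatorname{sign}\bigl(\det(\varLambda_x N(P)_x)\bigr)$ with no residual ambiguity. In particular I would check that reversing the orientation of $P$ or conjugating some $\rho_i$ — the operations noted after Definition~\ref{oqmofpair} — changes both sides of (1) in the same way, and that the relation $W_x^t\varLambda_x=I_n$ in (2) is manifestly independent of the choice of $L\in GL(n,\bZ)$ used to refine $\varLambda$, since such an $L$ rescales weight vectors by $(L^t)^{-1}$ compatibly on both sides. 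The duality computation underlying (2) and the determinant computation underlying (1) are otherwise routine linear algebra once the local model is set up.
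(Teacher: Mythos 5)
Your proposal takes essentially the same route as the paper: reduce $\varsigma(x)$ to the orientation comparison of Lemma~\ref{qts} and compute it in local coordinates from \eqref{tansumrho} and \eqref{orientM}, and obtain the weight vectors directly from the duality with the columns $\lambda_{j_1},\dots,\lambda_{j_n}$. The paper is marginally more explicit in part (1) --- it writes $c_\tau$ at $x$ as the $2m\times 2n$ block-diagonal matrix with blocks $A_P$ and $\varLambda^t$ and restricts to the rows indexed by $j_1,\dots,j_n$, reading off $\operatorname{sign}\bigl(\det N(P)_x\det\varLambda_x\bigr)$ --- but your frame comparison, once $\det\varLambda_x=\pm1$ absorbs the choice of inverse versus transpose, produces the identical conclusion.
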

\begin{proof}
(1) In order to write the isomorphism \eqref{ormap} in local
coordinates, recall \eqref{tansumrho} and \eqref{orientM}. So
$\tau_x(M)$ is isomorphic to $\bR^{2n}$ by choosing coordinates for
$P$ and $\tau_1(T^n)$, and $(\rho_1\oplus\dots\oplus\rho_m)|_x$ is
isomorphic to $\bR^{2m}$ by choosing coordinates for $\bR^m$ and
$\tau_1(T^m)$. Then the composition
\[
  \tau_x(M)\lra\tau_x(M)\oplus\bR^{2(m-n)}
  \stackrel{c_\tau}\longrightarrow
  (\rho_1\oplus\ldots\oplus\rho_m)|_x
\]
is represented by the $2m\times2n$--matrix
\[
\begin{pmatrix}
  A_P&0\\
  0&\varLambda^t
\end{pmatrix}\,,
\]
and~\eqref{ormap} is given by restriction to the relevant rows. The
result follows for $A_P^t$, and hence for any $N(P)$.

(2) The formula follows directly from the definitions, as in
\cite[\S1]{pano01}.
\end{proof}

\begin{remarks}
Theorem \ref{signswtscomb}(1) gives the following algorithm for
calculating $\varsigma(x)$: write the column vectors of
$\varLambda_x$ in such an order that the inward pointing normal
vectors to the corresponding facets give a positive basis of
$\bR^n$, and calculate the determinant of the resulting $n\times n$
matrix. This is equivalent to the definition of $\varsigma(x)$ in
\cite[\S5.4.1]{bu-pa02} and \cite[\S1]{pano01}. Theorem
\ref{signswtscomb}(2) states that $w_1(x)$, \dots, $w_n(x)$ and
$\lambda_{j_1}$, \dots, $\lambda_{j_n}$ are conjugate bases of
$\bR^n$.
\end{remarks}
Inserting the conclusions of Theorem \ref{signswtscomb} into
Corollaries \ref{evutg} and \ref{evgenus} completes our combinatorial
evaluation of genera on $(M,a)$. Of course the Conner-Floyd relations
stemming from the latter must be consequences of conditions (1) and
(2) of Definition~\ref{data} on the minors of $\varLambda$, and it is
interesting to speculate on the extent to which this implication is
reversible.

\begin{example}\label{cqpsplx}
Let $P$ be the $n$--simplex $\varDelta$, geometrically represented
by the standard simplex in $\bR^n$; it is oriented and finely
ordered by the standard basis. So $\varLambda$ takes the form
$(I_n:\epsilon)$, where $\epsilon$ denotes a column vector
$(\epsilon_1,\dots,\epsilon_n)^t$, and Condition $(*)$ confirms
that $(\varDelta,(I_n:\epsilon))$ is a combinatorial quasitoric
pair if and only if $\epsilon_i=\pm 1$ for every $1\leq i\leq n$.
When $\epsilon_i=-1$ for all $1\leq i\leq n$, the corresponding
omnioriented quasitoric manifold $M=M(\varDelta,(I_n:\epsilon))$
is the toric variety $\bC P^n$ of Example \ref{toricvar}. In
general, $M$ is obtained from $\bC P^n$ by a change of
omniorientation, in which the $i$th summand of the stable normal
bundle is reoriented when $\epsilon_i=1$, and unaltered otherwise;
it is denoted by $\bC P^n_\epsilon$ when convenient.

If the facets of $\varDelta$ are reordered by a transposition
$(j,n+1)$, the resulting pair $(\varDelta',(I_n:\epsilon')$ is
coincident with $(\varDelta,(I_n:\epsilon))$. A simple calculation
reveals that $\epsilon'_i=-\epsilon_i\epsilon_j$ for $i\neq j$,
whereas $\epsilon_j'=\epsilon_j$.

The $n+1$ fixed points correspond to the vertices of $\varDelta$,
and they are most conveniently labelled $x_0$, \dots, $x_n$, with
$x_0$ initial. The submatrix $(I_n:\epsilon)_{x_0}$ is $I_n$, and
the submatrices $(I_n:\epsilon)_{x_k}$ are obtained by deleting
the $k$th column, for $1\leq k\leq n$. So Theorem
\ref{signswtscomb}(2) expresses the weight vectors by
$w_j(x_0)=e_j$, and
\[
w_j(x_k)\;=\;
\begin{cases}
e_j-\epsilon_j\epsilon_ke_k&\text{for $1\leq j<k$}\\
e_{j+1}-\epsilon_{j+1}\epsilon_ke_k&\text{for $k\leq j<n$}\\
\epsilon_ke_k&\text{for $j=n$}
\end{cases}
\]
for $1\leq j,\,k\leq n$. Applying Corollary \ref{evgenus} for the
augmentation genus $ag$ then gives
\begin{equation}\label{agcpeps}
\frac{\varsigma(x_0)}{u_1\dots u_n}\;+\;
  \sum_{k=1}^n\Bigg(\frac{\varsigma(x_k)}{\epsilon_ku_k}
  \prod_{j\neq k}\frac1{(u_j-\epsilon_j\epsilon_ku_k)}\Bigg)
  \;=\;0,
\end{equation}
from which it follows that
$\epsilon_i=-\varsigma(x_i)/\varsigma(x_0)$ for $1\le i\le n$.
\end{example}

Formula \eqref{agcpeps} is closely related to \eqref{agcpnstand} of
Example \ref{agcfrels}. This case is of interest even for $n=1$, and
Corollary \ref{evutg} evaluates the universal toric genus by
\[
  \varPhi(\bC P^1)\;=\;\frac 1u+\frac 1{\bar u}
\]
in $\varOmega^{-2}_U(BT_+)$, where $\bar u$ denotes the infinite
series $[-1](u)$.

%
%
%
%
%
%
%
%
%

\section{Applications and further examples}\label{apfuex}

In our final section we consider applications of toric genera to
questions of rigidity, and discuss additional examples that support
the theory.


\subsection{Parametrised genera}
The parametrised genus $\varPhi(\pi)$ is an invariant of
tangentially stably complex $T^k$-bundles. We now return to Examples
\ref{isfipoexas}, and apply the formula \eqref{defgpsi} to complete
the calculations. The results suggest that there exists a systematic
theory of genera of bundles, with interesting applications to
rigidity phenomena.

\begin{example}\label{ptg1}
  The $T^3$-bundle $\bC P^1\ra U(3)/T^3\stackrel{\pi}{\ra}\bC P^2$ of
  Example \ref{isfipoexas}(1) is tangentially complex, and
  $\varPhi(\pi)$ lies in $\varOmega^{-2}_U(\EP_+)$, where $\EP$
  denotes $ET^3\times_{T^3}\bC P^2$.
  Since $\EP$ is actually $\bC
  P(\eta_1\times\eta_2\times\eta_3)$ over $BT^3$, there is an
  isomorphism
\[
  \varOmega^*_U(\EP_+)\;\cong\;
  \varOmega_*^U[[u_1,u_2,u_3,v]\big/\left((v+u_1)(v+u_2)(v+u_3)\right)
\]
of $\varOmega^*_U$-algebras, where $v\letbe c_1^{M\?U}(\rho)$ for
the canonical projective bundle $\rho$. So
\begin{equation}\label{ptg1form}
\varPhi(\pi)\;=\;
\sum_{\omega}g_{0,\omega}u^\omega
+\sum_{\omega}g_{1,\omega}\,vu^\omega
+\sum_{\omega}g_{2,\omega}\,v^2u^\omega
\end{equation}
holds in $\varOmega^{-2}_U(\EP_+)$ by \eqref{defgpsi}, where
$g_{j,\omega}$ lies in $\varOmega^U_{2(j+|\omega|+1)}$, and
$\omega=(\omega_1,\omega_2,\omega_3)$; in particular,
$g_{0,0}+g_{1,0}v+g_{2,0}v^2$ is the cobordism class of $\pi$ in
{$\varOmega^{-2}_U(\bC P^2_+)$}.

For $0\leq j\leq 2$, the element $a_{j,\omega}$ of Theorem
\ref{Gpirepsgpi} is represented by a map
\[
  f_{j,\omega}\colon(S^3)^\omega\times_{T^\omega}B_j \llra
  ET^3\times_{T^3}\bC P^2,
\]
which combines a representative for $b_\omega$ on the first factor
with a $T^3$-equivariant representative for
$b_j\in\varOmega^U_{2j}(\bC P^2)$ on the second. It follows that
$g_{j,\omega}$ is represented by the induced stably complex
structure on the total space of the bundle
\[
  \bC P^1\lra(S^3)^\omega\times_{T^\omega}\!\bC
  P(\psi_j^\perp\?\oplus\bC^{2-j})\lra
  (S^3)^\omega\times_{T^\omega}\!B_j,
\]
Thus $g_{0,0}=[\bC P^1]$ in $\varOmega_2^U$, as expected;
furthermore, $g_{1,0}=[\bC P(\zeta\oplus\bC)]$ in $\varOmega_4^U$
and $g_{2,0}=[\bC P(\psi_2^\perp)]$ in $\varOmega_6^U$, where
$\zeta=\psi_1^\perp$ lies over $B_1=S^2$. These classes may be
computed in terms of standard polynomial generators for
$\varOmega^U_*$ as required.
\end{example}

The properties of $\varPhi(\pi)$ may be interpreted as obstructions
to the existence of equivariant bundles $\pi$. This principle is
illustrated by the family of $4$--dimensional omnioriented
quasitoric manifolds over the combinatorial square $P=I^2$.

\begin{example}\label{tarassq}
Let the vertices of $P$ be ordered cyclically, and write
$\varLambda$ as
\[
  \begin{pmatrix}
  1&0&\epsilon_1&\delta_2\\0&1&\delta_1&\epsilon_2
  \end{pmatrix};
\]
so $(P,\varLambda)$ is a combinatorial quasitoric pair if and only
if the equations
\begin{equation}\label{qtpsqeq}
  \epsilon_1=\pm1,\quad\epsilon_2=\pm1,\quad
  \epsilon_1\epsilon_2-\delta_1\delta_2=\pm1
\end{equation}
are satisfied. The fixed points $x_i$ of $M(P^2,\varLambda)$
correspond to the vertices of $P^2$, and are determined by the pairs
$(i,i+1)$ of facets, where $1\le i\le 4$ (and $x_5=x_1$). We assume
that $\varsigma(x_1)=1$, by reversing the global orientation of $P^2$
if necessary. Then the calculations of Lemma~\ref{qts} determine the
remaining signs by
\[
  \varsigma(x_2)=-\epsilon_1,\quad
  \varsigma(x_3)=\epsilon_1\epsilon_2-\delta_1\delta_2,
  \quad\varsigma(x_4)=-\epsilon_2,
\]
and a criterion of Dobrinskaya ~\cite[Corollary~7]{dobr01}
confirms that $M(P^2,\varLambda)$ is the total space of a
tangentially complex $T^2$-equivariant bundle precisely when
$\delta_1\delta_2=0$.

The necessity of this condition follows from Theorem~\ref{evutgx}, by
considering the equations associated to the augmentation genus $ag$.
Given \eqref{qtpsqeq}, the contributions $k(x_i)$ of the four fixed
points must always sum to zero; and if $M(P,\varLambda)$ is the total
space of some $T^2$-bundle, then the $k(x_i)$ must split into pairs,
which also sum to zero. But $k(x_1)+k(x_3)\neq 0$ by inspection, so
two possibilities arise.

Firstly, $k(x_1)+k(x_4)$ and $k(x_2)+k(x_3)$ are given by
\[
  \frac1{t_1t_2}+
  \frac{-\epsilon_2}{\epsilon_2t_2(t_1-\epsilon_2\delta_2t_2)}
\spandsp
  \frac{-\epsilon_1}{\epsilon_1t_1(-\epsilon_1\delta_1t_1+t_2)}+
  \frac{\epsilon_1\epsilon_2}
  {\epsilon_1t_1(-\epsilon_1\epsilon_2\delta_1t_1+\epsilon_2t_2)}
\]
respectively, and vanish precisely when $\delta_2=0$. Secondly,
and by similar reasoning, $k(x_1)+k(x_2)$ and $k(x_3)+k(x_4)$
vanish precisely when $\delta_1=0$.
\end{example}

Example \ref{tarassq} extends inductively to quasitoric manifolds
over higher dimensional cubes, and shows that any such manifold
splits into a tower of fibre bundles if and only if the
corresponding submatrix $\varLambda_\star$ may be converted to
lower triangular form by simultaneous transpositions of rows and
columns. Compare~\cite[Proposition~3.2]{ma-pa08}.

\subsection{Elliptic genera}
Since the early 1980s, cobordism theory has witnessed the
emergence of \emph {elliptic genera}, taking values in rings of
modular forms. The original inspiration was index-theoretical, and
driven by Witten's success \cite{witt87} in adopting the
Atiyah-Bott fixed-point formula to compute the equivariant index
of a putative Dirac operator on the free loop space of a Spin
manifold. Analytic and formal group-theoretic versions of complex
elliptic genera were then proposed by Ochanine \cite{ocha87}, in
the oriented category. These led directly to the homotopical
constructions of Landweber, Ravenel, and Stong \cite{larast95},
Hopkins \cite{hopk95}, and Ando, Hopkins, and Strickland
\cite{a-h-s01}, amongst others; their constructions include
several variants of elliptic cohomology, such as the complex
oriented theory represented by the spectrum $Ell$.

Mindful of Definition \ref{defhtpclgenus} and Examples
\ref{cgenexs}, we shall focus on the homotopical elliptic genus
$(t^{Ell},h^{Ell})$, where $Ell_*$ is the graded ring
$\bZ[\frac{1}{2}][\delta,\epsilon,(\delta^2-\epsilon)^{-1}]$, and
$\delta$ and $\epsilon$ have dimensions $4$ and $8$ respectively.
Analytically, it is determined by the elliptic integral
\begin{equation}\label{elllog}
  m^{H\wedge E\?ll}(x)\;=\;\int_0^{x} \frac{dt}{\sqrt{1-2\delta
  t^2+\epsilon t^4}}
\end{equation}
over $(H\wedge Ell)_*$, so $b^{H\wedge Ell}(x)$ is a Jacobi sn
function modulo constants.

We shall discuss certain generalisations of $t^{Ell}_*$ in the toric
context. Their homotopical status remains unresolved, and it is most
convenient to work with the analytical and formal group theoretic
methods used in their construction. Nevertheless, we proceed with
the extension \eqref{niceext} closely in mind, and plan to describe
integrality and homotopical properties of these genera elsewhere.

Following~\cite{buch90}, we consider the formal group law
\begin{equation}\label{bsfgl}
F_b(u_1,u_2)\;=\; u_1c(u_2)+u_2c(u_1)-au_1u_2-
\frac{d(u_1)-d(u_2)}{u_1c(u_2)-u_2c(u_1)}u^2_1u^2_2
\end{equation}
over the graded ring $R_*=\bZ[a,c_j,d_k\colon j\geq 2, k\geq 1]/J$,
where $\deg a=2$, $\deg c_j=2j$ and $\deg d_k=2(k+2)$; also, $J$ is
the ideal of associativity relations for \eqref{bsfgl}, and
\[
c(u)\;\letbe\;1+\sum_{j\geq 2}c_ju^j\sands d(u)\;\letbe\;\sum_{k\geq
1}d_ku^k.
\]
\begin{theorem}\label{bsexp}
The exponential series $f_b(x)$ of \eqref{bsfgl} may be written
analytically as $\exp(ax)/\phi(x,z)$, where
\begin{equation}\label{bakakh}
\phi(x,z)\;=\;
\frac{\sigma(z-x)}{\sigma(z)\sigma(x)}\exp(\zeta(z)\,x),
\end{equation}
$\sigma(z)$ is the Weierstrass sigma function, and
$\zeta(z)=(\ln\sigma(z))'$.

Moreover, $R_*\otimes\bQ$ is isomorphic to $\bQ[a,c_2,c_3,c_4]$ as
graded algebras.
\end{theorem}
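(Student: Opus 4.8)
The plan is to split the statement into the analytic identification of $f_b$ with the Baker--Akhiezer function and the structural computation of $R_*\otimes\bQ$, and to deduce the latter from the former. For the first part I would begin from the tautology $f_b(x_1+x_2)=F_b(f_b(x_1),f_b(x_2))$, which holds over $R_*\otimes\bQ$ by definition of the exponential of $F_b$. Differentiating once in $x_2$ at $x_2=0$ extracts from \eqref{bsfgl} a first-order differential equation $f_b'=\beta(f_b)$ with $\beta$ a power series determined by $a$ and $c$; differentiating again, and using the very particular shape of the last term of \eqref{bsfgl} — the quotient of the antisymmetric combinations $d(u_1)-d(u_2)$ and $u_1c(u_2)-u_2c(u_1)$ — produces a second, compatible relation. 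Together these show that the function $\psi(x)\letbe e^{ax}/f_b(x)$, a Laurent series $x^{-1}+O(x)$, satisfies a Lam\'e equation $\psi''=(2\wp(x)+B)\psi$ for suitable Weierstrass invariants $g_2,g_3$ and a constant $B$. The solution with a simple pole of residue $1$ at the origin and a zero at $x=z$ is the classical Lam\'e function, namely $\phi(x,z)$ of \eqref{bakakh} with $B=\wp(z)$; hence $f_b(x)=e^{ax}/\phi(x,z)$ as claimed.

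I would also verify this directly at the level of addition laws, as a cross-check. The function $\phi(x_1,z)\phi(x_2,z)/\phi(x_1+x_2,z)$ is an elliptic function of $x_1$ of degree two, with poles at $0$ and $z-x_2$; its Hermite (partial-fraction) decomposition expresses it as a sum of $\zeta$-values, and re-expressing these through $u_i=f_b(x_i)$ by means of $\phi(x,z)\phi(-x,z)=\wp(z)-\wp(x)$ and its derivative, together with the $\zeta$- and $\wp$-addition formulae, reproduces exactly the right-hand side of \eqref{bsfgl} and identifies $c$ and $d$ with explicit elliptic functions of $x$ and $z$.

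For the ``moreover'' part, the displayed formula exhibits every coefficient of $F_b$ over $\bQ$ — hence every $c_j$ and $d_k$ modulo the associativity ideal $J$ — as a polynomial in $a$ and in $g_2,g_3,\wp(z),\wp'(z)$; eliminating $g_3$ via $\wp'(z)^2=4\wp(z)^3-g_2\wp(z)-g_3$ realises $R_*\otimes\bQ$ as a subquotient of the polynomial ring $\bQ[a,g_2,\wp(z),\wp'(z)]$ in four variables. I would then compute $c_2,c_3,c_4$ as explicit polynomials in $g_2,\wp(z),\wp'(z)$ from the Taylor expansion of $\phi$ and check that $(g_2,\wp(z),\wp'(z))\mapsto(c_2,c_3,c_4)$ has generically nonzero Jacobian; with the obvious independence of $a$ this gives algebraic independence of $a,c_2,c_3,c_4$, so that the natural map $\bQ[a,c_2,c_3,c_4]\to R_*\otimes\bQ$ is injective. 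Surjectivity follows because the four parameters $a,g_2,\wp(z),\wp'(z)$ are then birationally, and after clearing denominators polynomially, recovered from $a,c_2,c_3,c_4$, whence every higher $c_j$ and every $d_k$ is a polynomial in them; equivalently, one may run the associativity relations to see that they successively solve for $d_1,d_2,\dots$ and $c_5,c_6,\dots$ while imposing no condition on $a,c_2,c_3,c_4$. The extension \eqref{niceext} and the work of \cite{buch90} provide the natural habitat for these manipulations.

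The step I expect to be the main obstacle is the precise matching in the first part: turning the three-term $\sigma$-identity (or the Hermite decomposition) into exactly the normal form \eqref{bsfgl}, with the normalisations $c(0)=1$, $d(0)=0$ and, in particular, with the linear term of $c$ absent once the factor $e^{ax}$ has been split off. This is a finite but fiddly computation with the Weierstrass addition formulae, and it also underpins the ``moreover'' part, since the cleanest route there is the geometric one — parametrising the solutions of \eqref{bsfgl} by elliptic curves with a marked point — rather than a direct assault on the ideal $J$.
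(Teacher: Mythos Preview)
Your approach diverges from the paper's in both parts, and the second part contains a genuine gap.

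For the analytic identification, the paper does not derive the Lam\'e equation. Instead it makes the substitution $\xi_1(u)=c^2(f(u))-af(u)c(f(u))+d(f(u))f^2(u)$ and $\xi_2(u)=c(f(u))$ in the addition law to cast it in the form
\[
  f(u_1+u_2)\;=\;\frac{f^2(u_1)\xi_1(u_2)-f^2(u_2)\xi_1(u_1)}
  {f(u_1)\xi_2(u_2)-f(u_2)\xi_2(u_1)},
\]
and then invokes~\cite[Theorem~1 and Corollary~8]{buch90} directly. Your Lam\'e-equation route is a legitimate alternative (indeed it is closer to Krichever's original presentation in~\cite{kric90}), but your sketch ``differentiate twice and observe that $\psi$ satisfies $\psi''=(2\wp+B)\psi$'' hides real work: the second differentiation produces a relation involving $c$, $c'$, $d$, $d'$ simultaneously, and one must eliminate carefully before anything recognisable as Lam\'e emerges. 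The paper's algebraic manipulation plus citation is shorter, while yours is more self-contained if carried through.

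For the structure of $R_*\otimes\bQ$, the paper's argument is both different and considerably simpler than yours. Since $F_b$ is commutative, the classifying map $\varOmega^U_*\to R_*$ is surjective, so $R_*\otimes\bQ$ is generated by the exponential coefficients $f_k$. The paper then reduces the identity $F_b(u_1,u_2)=f_b(f_b^{-1}(u_1)+f_b^{-1}(u_2))$ modulo decomposables, reads off that $f_1,f_2,f_3,f_4$ are nonzero multiples of $a,c_2,c_3,c_4$ respectively, and that $f_k\equiv 0$ for $k\ge 5$. This is a handful of lines and uses no elliptic function theory at all.

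Your route via the parametrisation $(a,g_2,\wp(z),\wp'(z))$ is more conceptual but has a gap in the surjectivity step. A nonzero Jacobian shows only that $(g_2,\wp,\wp')\mapsto(c_2,c_3,c_4)$ is locally analytically invertible; ``birationally, and after clearing denominators polynomially, recovered'' is not a valid inference---a birational inverse need not be polynomial (consider $y=x^2$). So you cannot conclude that every higher $c_j$ and $d_k$ lies in $\bQ[a,c_2,c_3,c_4]$ this way. Your fallback ``equivalently, run the associativity relations'' is exactly what the paper's mod-decomposables computation accomplishes, and is the step you actually need to carry out. The Jacobian argument, on the other hand, \emph{is} the right tool for algebraic independence, and complements the paper's argument, which is terse on that point.
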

\begin{proof}
The formal group law may be defined by
\begin{equation}\label{fglexplog}
f_b\bigl(f^{-1}_{b}(u_1)+f^{-1}_{b}(u_2)\bigr)\;=\;F_{b}(u_1,u_2)
\end{equation}
over $R_*\otimes\bQ$, so the exponential series satisfies the
functional equation
\begin{multline*}
  f(u_1+u_2)\;=\;f(u_1)c(f(u_2))+f(u_2)c(f(u_1))-af(u_1)f(u_2)\\-
  \frac{d(f(u_1))-d(f(u_2))}{f(u_1)c(f(u_2))-f(u_2)c(f(u_1)}f^2(u_1)f^2(u_2)
\end{multline*}
over $R\otimes\bC$. Substituting
$\xi_1(u)=c^2(f(u))-af(u)c(f(u))+d(f(u))f^2(u)$ and $\xi_2(u)=c(f(u))$
gives
\[
  f(u_1+u_2)=\frac{f^2(u_1)\xi_1(u_2)-f^2(u_2)\xi_1(u_1)}
  {f(u_1)\xi_2(u_2)-f(u_2)\xi_2(u_1)}.
\]
This equation is considered in~\cite{buch90}, and the required form
of the solution is established by~\cite[Theorem~1 and
Corollary~8]{buch90}.

Because $F_b$ is commutative, its classifying map $\varOmega^U_*\ra
R_*$ is epimorphic, and $R_*\otimes\bQ$ is generated by the
coefficients of $f_{b}(x)=x+\sum_{k\geq 1}f_kx^{k+1}$. In order to
identify polynomial generators in terms of the elements $a$, $c_j$
and $d_k $, it therefore suffices to reduce equation
\eqref{fglexplog} modulo decomposable coefficients, and consider
\begin{multline*}
\sum_{k\geq
1}f_k\big((u_1+u_2)^{k+1}-u_1^{k+1}-u_2^{k+1}\big)\\\equiv\;
-au_1u_2+\sum_{k\geq 2}c_k\big(u_1u_2^k+u_1^ku_2\big)-
u_1^2u_2^2\sum_{k\geq 1}d_k(u_1^k-u_2^k)/(u_1-u_2).
\end{multline*}
Equating coefficients in degrees $4$, $6$, $8$, and $10$ then gives
\begin{equation}\label{fks}
  2f_1=-a,\;\; 3f_2\equiv c_2,\;\; 4f_3\equiv c_3,\;\;
  6f_3\equiv -d_1,\;\;5f_4\equiv c_4,\;\; 10f_4\equiv -d_2;
\end{equation}
in higher degrees the only solutions are $f_k\equiv 0$, so $f_k$ is
decomposable for $k\geq 5$.
\end{proof}

The function $\varphi(x,z)$ of \eqref{bakakh} is known as the
\emph{Baker--Akhiezer function} associated to the elliptic curve
$y^2=4x^3-g_2x-g_3$. It satisfies the \emph{Lam\'e equation}, and is
important in the theory of nonlinear integrable equations.
Krichever~\cite{kric90} studies the genus $kv$ corresponding to the
exponential series $f_b$, which therefore classifies the formal
group law~\eqref{bsfgl}. Analytically, it depends on the four
complex variables $z$, $a$, $g_2$ and $g_3$.

\begin{corollary}\label{kviso}
  The genus $kv\colon\varOmega_*^U\ra R_*$ induces an isomorphism of
  graded abelian groups in dimensions $<10$.
\end{corollary}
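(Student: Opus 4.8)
The plan is to bootstrap from two facts already established in the proof of Theorem~\ref{bsexp}: first, that the classifying homomorphism $kv\colon\varOmega^U_*\to R_*$ is surjective (because $F_b$ is commutative, its classifying map out of the Lazard ring is an epimorphism); and second, that $R_*\otimes\bQ\cong\bQ[a,c_2,c_3,c_4]$ as graded rings, with $\deg a=2$ and $\deg c_j=2j$. No further analysis of the associativity ideal $J$ is needed; the remaining work is a rank count followed by an elementary torsion argument.

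First I would compare Poincar\'e series. The rationalisation $\varOmega^U_*\otimes\bQ$ is polynomial on generators in every positive even dimension, so its ranks in dimensions $0,2,4,6,8,10$ are $1,1,2,3,5,7$; the ring $\bQ[a,c_2,c_3,c_4]$ has generators only in dimensions $2,4,6,8$, so its ranks in those same dimensions are $1,1,2,3,5,6$. The two sequences agree exactly up to dimension $8$, which is the origin of the bound $<10$; the list~\eqref{fks} pinpoints the corresponding collapse of generators in dimension $10$. Since $kv$ is surjective, so is $kv_\bQ=kv\otimes\bQ$, and a surjection of finite-dimensional $\bQ$-vector spaces of equal dimension is an isomorphism; hence $kv_\bQ$ is an isomorphism in every dimension $<10$ (the odd dimensions being vacuous).

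Finally I would upgrade this to an integral statement in each dimension $n<10$. Applying the flat functor $-\otimes\bQ$ to the short exact sequence $0\to K_n\to\varOmega^U_n\to R_n\to 0$, with middle map $kv$ (exact because $kv$ is surjective), identifies $K_n\otimes\bQ$ with the kernel of $kv_\bQ$ in dimension $n$, which is $0$ by the previous paragraph. But $K_n$ is a subgroup of the free abelian group $\varOmega^U_n$, hence free abelian, and a free abelian group with trivial rationalisation is trivial; so $K_n=0$ and $kv$ is bijective in dimension $n$. There is no genuine obstacle here: the only point to watch is that the rank count by itself produces merely a rational isomorphism, and the passage to an integral one really does use the surjectivity of $kv$ (a map which is a rational isomorphism need not be integrally injective). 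As a by-product the argument shows that $R_n$ is free abelian of rank equal to that of $\varOmega^U_n$ for all $n<10$.
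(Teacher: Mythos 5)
Your proof is correct and rests on exactly the same two inputs the paper uses: the surjectivity of $kv$ and the identification $R_*\otimes\bQ\cong\bQ[a,c_2,c_3,c_4]$ from Theorem~\ref{bsexp}. The only variation is cosmetic: the paper reads off directly from~\eqref{fks} that the polynomial generators of $\varOmega^U_*\otimes\bQ$ in degrees $2,4,6,8$ map to polynomial generators of $R_*\otimes\bQ$, hence $kv_\bQ$ is monic in degrees $<10$ and the conclusion follows from torsion-freeness of $\varOmega^U_*$; you reach the same rational isomorphism by comparing Poincar\'e series and invoking the surjection-of-equal-rank principle, then make the passage to an integral statement explicit via the kernel. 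Both are sound, and your version makes the role of the rank collapse in degree~$10$ (where $7>6$) more visible.
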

\begin{proof}
  By \eqref{fks}, $kv\otimes 1_\bQ$ is monomorphic in dimensions
  $<10$; so $kv$ is also mono\-morphic, because $\varOmega^U_*$ is
  torsion-free. Since $kv$ is epimorphic in all dimensions, the result
  follows.
\end{proof}

Many of the genera considered in previous sections are special cases
of $kv$, and the corresponding methods of calculation may be adapted
as required.
\begin{examples}\label{specialcasesbs}\hfill\\
\indent(1) The Abel genus $ab$ and $2$-parameter Todd genus
$\mathit{t2}$ of Examples \ref{cgenexs}(5) may be identified with
the cases
\[
d(u)=0,\;\;a=y+z\spandsp c(u)=1-yzu^2,\;\;d(u)=-yz(y+z)u-y^2z^2u^2
\]
respectively. The corresponding formal group laws are
\[
F_{ab}=u_1c(u_2)+u_2c(u_1)-(y+z)u_1u_2 \spandsp
F_{\mathit{t2}}=\frac{u_1+u_2-(y+z)u_1u_2}{1-yzu_1u_2}\,.
\]

(2) The elliptic genus $t^{Ell}_*$ of \eqref{elllog} corresponds to
Euler's formal group law
\[
F_{Ell}(u_1,u_2)=\frac{u_1c(u_2)+u_2c(u_1)}{1-\varepsilon
u_1^2u_2^2} =u_1c(u_2)+u_2c(u_1)+ \varepsilon
\frac{u_1^2-u_2^2}{u_1c(u_2) -u_2c(u_1)}u_1^2u_2^2\,,
\]
and may therefore be identified with the case
\[
a=0,\;\; d(u)=-\varepsilon u^2,\sands c^2(u)=R(u)\letbe 1-2\delta
u^2+\varepsilon u^4.
\]
\end{examples}

\begin{theorem}\label{suprop}
Let $M^{2n}$ be a specially omnioriented quasitoric manifold; then\\
\indent
\emph{(1)} the Krichever genus $kv$ vanishes on $M^{2n}$\\
\indent
\emph{(2)} $M^{2n}$ represents $0$ in $\varOmega^U_{2n}$ whenever
$n<5$.
\end{theorem}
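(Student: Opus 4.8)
The plan is to exploit the localisation formula of Corollary \ref{evgenus} together with the explicit combinatorial description of the fixed point data of a specially omnioriented quasitoric manifold. For part (1), recall that the Krichever genus $kv$ classifies the Buchstaber formal group law \eqref{bsfgl}, whose exponential is $f_b(x)=\exp(ax)/\varphi(x,z)$ by Theorem \ref{bsexp}. Applying $kv\cdot\varPhi$ to $M$ and using Corollary \ref{evgenus} expresses $kv^{T^n}(M)$ as a sum over $\fix(M)$ of products $\varsigma(x)\prod_{j}1/b(w_j(x)\cdot u)$, where $b$ is the exponential-type series for $F_b$; equivalently, after the change of orientation to $H\wedge Ell$-type coordinates, as a sum of $\varsigma(x)\prod_j \varphi(w_j(x)\cdot u,z)\exp(-a\,w_j(x)\cdot u)$ up to the elliptic function machinery. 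The key point is that for a \emph{special} omniorientation, Proposition \ref{combdesc1} gives $\sum_{i}\lambda_{i,j}=1$ for all $j$, which by Theorem \ref{signswtscomb}(2) forces the weight vectors at each fixed point to satisfy $\sum_{j=1}^n w_j(x)=\mathbf 1$ (the all-ones vector, or some fixed vector independent of $x$) — so $\sum_j w_j(x)\cdot u = u_1+\dots+u_n$ is the \emph{same} linear form at every fixed point. This is exactly the hypothesis under which Krichever's rigidity argument applies: the function $z\mapsto kv^{T^n}(M)$, viewed analytically, is an elliptic function in $z$ with poles controlled by the weight data, but the constraint $\sum_j w_j(x)=\text{const}$ makes the total sum over fixed points into an elliptic function \emph{with no poles}, hence constant in $z$; examining its value as $z\to 0$ (or at a distinguished point) shows the constant is $0$.

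More concretely, I would argue as follows. Treat $kv^{T^n}(M)(u_1,\dots,u_n)$ as a formal power series and substitute $u_i = t\,s_i$ for formal parameters, so that each $[w_j(x)](tu) \equiv (w_j(x)\cdot s)\,t$ modulo $t^2$; then the $t$-expansion $\sum_\ell cf_\ell\, t^\ell$ of Proposition \ref{cfrels} (adapted to $kv$) has $cf_n = kv(M)$. Now introduce the elliptic variable $z$ via the Baker–Akhiezer function: the identity $1/b^{Ell}(w\cdot u) \sim \varphi(w\cdot u, z)e^{-a(w\cdot u)}$ together with the quasi-periodicity of $\varphi$ shows that, summing over $\fix(M)$ with signs, the poles at the lattice points cancel in pairs precisely because the weight vectors at each vertex are conjugate to the columns $\lambda_{j_k}$ (Theorem \ref{signswtscomb}(2) remark) and the column-sum condition holds. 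Hence $kv^{T^n}(M)$ is, analytically, a holomorphic elliptic function of $z$, therefore independent of $z$; letting $z\to\infty$ (or using the degeneration where $\varphi$ becomes elementary) collapses the sum to the augmentation-genus identity \eqref{agzero}, which vanishes. This forces every coefficient $cf_\ell$, and in particular $kv(M)=cf_n$, to be zero.

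For part (2), combine (1) with Corollary \ref{kviso}: the genus $kv$ induces an isomorphism of graded abelian groups $\varOmega^U_{2n}\xrightarrow{\ \cong\ } R_{2n}$ in dimensions $2n<10$, i.e.\ for $n\le 4$. Since $kv(M^{2n})=0$ by part (1) and $kv$ is injective on $\varOmega^U_{2n}$ for $n<5$, we conclude $[M^{2n}]=0$ in $\varOmega^U_{2n}$ whenever $n<5$. This is immediate once (1) is in hand.

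The main obstacle will be making the elliptic-function rigidity argument in part (1) precise in the \emph{universal/formal} setting rather than over $\mathbb C$: one must check that the cancellation of poles in the localisation sum \eqref{lfx} (equivalently, the Conner–Floyd-type relations of Proposition \ref{cfrels}) is genuinely forced by the special condition $\sum_i\lambda_{i,j}=1$ via Theorem \ref{signswtscomb}, and then transport the conclusion back through the change of orientation to $kv$ without losing information — this is where Krichever's complex-analytic technique (poles of an elliptic function summing to a holomorphic, hence constant, function) is doing the real work, and where one would invoke \cite[Theorem 1 and Corollary 8]{buch90} and Krichever's rigidity results \cite{kric90} rather than re-derive them. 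A secondary point requiring care is the bookkeeping for the \emph{signs} $\varsigma(x)$ of Theorem \ref{signswtscomb}(1): unlike the almost complex case they need not all be $+1$, so the pole-cancellation must be verified with the signs included, which is exactly what the generalisation of Krichever's formula to stably complex structures (Corollary \ref{evutg}) was set up to handle.
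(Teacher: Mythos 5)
Your proposal is correct and follows essentially the same approach as the paper: the crucial observation in both is that the special condition of Proposition \ref{combdesc1} ($\sum_i\lambda_{i,j}=1$ for every $j$), combined with Theorem \ref{signswtscomb}(2) ($W_x^t\varLambda_x=I_n$), forces $\sum_{j=1}^n w_j(x)$ to equal the all-ones vector independently of $x$, after which one cites Krichever's rigidity theorem for (1) and Corollary \ref{kviso} for (2). Two small points of difference with the paper's proof are worth flagging. First, the paper reduces explicitly to a generic subcircle $T<T^n$ given by a primitive vector $\nu$ (using \cite[Theorem~2.1]{pano01} to identify the circle fixed points with the torus fixed points), which produces the constant nonzero weight-sum $s(x)=\sum_j\nu_j$ in the form needed to apply Krichever's \cite[Theorem~2.1]{kric90} directly; you work with the full $T^n$-action and leave this implicit. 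Second, your suggested mechanism for extracting the \emph{vanishing} of $kv(M)$ (degenerating $z$ to collapse the sum to the augmentation identity \eqref{agzero}) is heuristic and is not the actual argument: what is really used is the quasi-periodicity of the Baker--Akhiezer function, which, together with the nontrivial constant multiplier coming from $\sum_j\nu_j\neq 0$, forces a holomorphic elliptic function to vanish identically rather than merely be constant. You correctly note that one should invoke Krichever's result rather than re-derive it, so this does not damage the proof, but the degeneration sketch as stated would not deliver the conclusion.
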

\begin{proof}
(1) Let $T<T^n$ be a generic subcircle, determined by a primitive
integral vector $\nu=(\nu_1,\dots,\nu_n)$. By
\cite[Theorem~2.1]{pano01}, the fixed points $x$ of $T$ are
precisely those of $T^n$, and have $T$-weights $w_j(x)\cdot\nu$ for
$1\leq j\leq n$. By Proposition~\ref{combdesc1} and
Theorem~\ref{signswtscomb}, the sum $s(x)\letbe\sum_jw_j(x)\cdot\nu$
reduces to $\sum_j\nu_j$, which is independent of $x$ and may be
chosen to be nonzero. Since $M$ is specially omnioriented, it
follows from \cite[Theorem~2.1]{kric90} that $kv(M)=0$.

(2) Combining (1) with Corollary \ref{kviso} establishes (2).
\end{proof}

Krichever \cite[Lemma 2.1]{kric90} proves that $s(x)$ is
independent of $x$ in more general circumstances.

\begin{conjecture}\label{qtsuconjec}
Theorem \ref{suprop}(2) holds for all $n$.
\end{conjecture}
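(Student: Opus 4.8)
We must show that $[M]=0$ in $\varOmega^U_{2n}$ for every specially omnioriented quasitoric manifold $M^{2n}$. Since $\varOmega^U_*$ is torsion free and a stably complex manifold bounds precisely when all its Chern numbers vanish, this is equivalent, by Proposition~\ref{cfrels}, to the single equation $\cf_n=0$ for the series \eqref{tseries}; and by Corollary~\ref{evutg} together with Theorem~\ref{signswtscomb} the latter is a purely combinatorial assertion about the fixed point data $\bigl(\varsigma(x),w_1(x),\dots,w_n(x)\bigr)$ attached to $(P,\varLambda)$. It is worth recording at the outset the precise form taken by the hypothesis: by Proposition~\ref{combdesc1} the omniorientation is special exactly when every column of $\varLambda$ has entries summing to $1$, which --- as in the proof of Theorem~\ref{suprop}(1) --- is equivalent to the statement that $\sum_{j=1}^n w_j(x)=(1,\dots,1)$ in $\bZ^n$ for every $x\in\fix(M)$. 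So the conjecture reduces to showing that, whenever the weight vectors at each fixed point of a quasitoric manifold sum to $(1,\dots,1)$, the degree $2n$ constant term of $\sum_{\fix(M)}\varsigma(x)\prod_j t/[w_j(x)](tu)$ vanishes over the Lazard ring.

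The plan is to argue by induction on $n$, the cases $n<5$ being supplied by Theorem~\ref{suprop}(2). For the inductive step I would exploit two structural features of the combinatorial data. First, each facial submanifold $M_i=\pi^{-1}(F_i)$ is itself a quasitoric manifold over the facet $F_i$, with combinatorial data obtained from $(P,\varLambda)$ by restriction; its fixed points are those of $M$ lying on $F_i$, and its weight vectors are the projections of those of $M$. I would analyse how the unit-column-sum condition behaves under this restriction --- after the re-fine-ordering and $GL(n-1,\bZ)$ correction of Theorem~\ref{corre} --- with the aim of applying the inductive hypothesis to the $[M_i]$ and then of expressing $[M]$ in terms of the $[M_i]$ and of lower skeleta. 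Second, whenever $(P,\varLambda)$ realises $M$ as the total space of a quasitoric bundle over a quasitoric base --- the situation of Example~\ref{tarassq} and its cube generalisation, detected by a lower-triangular form of a submatrix of $\varLambda$ --- Theorem~\ref{multandrig} gives $[M]=[\text{fibre}]\cdot[\text{base}]$, and both factors again carry special omniorientations, so vanish by induction.

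The genuinely difficult case, and the one I expect to be the main obstacle, is that of a polytope admitting no such fibration. Here the Conner--Floyd relations $\cf_l=0$ for $l<n$ constrain but do not determine $[M]$, and the vanishing $kv(M)=0$ of Theorem~\ref{suprop}(1) is of no help in dimensions exceeding $8$, where $kv$ ceases to be injective. One positive input remains available: for every $\nu\in\bZ^n$ with $\sum_i\nu_i=0$ the restricted circle action on $M$ has weights $w_j(x)\cdot\nu$ summing to $0$ at each fixed point, so $M$ is ``equivariantly $SU$'' with respect to a large family of subcircles, and the full strength of circle-equivariant $SU$-rigidity (in the spirit of Krichever~\cite{kric90}) may be brought to bear. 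To convert this into a bound on the cobordism class itself I would either seek a family of genera, simultaneously rigid on all specially omnioriented quasitoric manifolds, that jointly detects $\varOmega^U_{2n}$; or --- more promisingly --- try to produce an explicit null-cobordism of $(M,c_\tau)$ directly from $P$, extending the stably complex structure over a $(2n+1)$-manifold built by the kind of framing argument used for $\zp$ in \eqref{tauzp} and for the bounding structures $c^\partial_j$ on the bounded flag manifolds, which were after all the source of our bounding examples in Theorem~\ref{multandrig}. Constructing that bounding manifold, and verifying that the special hypothesis is exactly what makes the construction go through, is where the real work lies.
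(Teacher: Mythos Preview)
The statement you are addressing is labelled a \emph{Conjecture} in the paper, and the paper offers no proof of it whatsoever; there is therefore nothing to compare your proposal against. What you have written is not a proof but a programme, and you say so yourself in the final paragraph (``is where the real work lies''). That is a fair assessment: nothing in your outline actually establishes the vanishing of $[M]$ for $n\geq 5$.

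Beyond its incompleteness, the proposal contains a concrete error. You invoke Theorem~\ref{multandrig} to conclude that, when $M$ fibres quasitorically, one has $[M]=[\text{fibre}]\cdot[\text{base}]$ in $\varOmega^U_*$. Theorem~\ref{multandrig} says no such thing: it asserts multiplicativity of a \emph{genus} $\ell$ that is already known to be $T^k$-rigid, i.e.\ $\ell(N)=\ell(M)\ell(B)$, not the equality of cobordism classes. The universal genus $ug$ is not rigid, so the product formula fails at the level of $\varOmega^U_*$; indeed equation~\eqref{mult} shows the correction term $\epsilon^B_*f^*\beta$ explicitly. Your assertion that ``both factors again carry special omniorientations'' is also unsupported: the condition $c_1(c_\tau)=0$ on a total space does not in general force $c_1=0$ on fibre and base separately. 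Similarly, the claim that the unit-column-sum condition of Proposition~\ref{combdesc1} is inherited by the facial submanifolds $M_i$ after the refinement procedure of Theorem~\ref{corre} is asserted without justification, and is not obvious. These are not minor details to be filled in later; they are the load-bearing steps of the proposed induction, and as stated they do not hold.
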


\subsection{Rigidity}
We now apply $kv$, and $\mathit{t2}$ to characterise genera that
are $T^k$-rigid on certain individual stably complex manifolds
$M$, as specified by Definition~\ref{genusdefs}. Up to complex
cobordism, we may insist that $M$ admits a tangentially stably
complex $T^k$-action with isolated fixed points $x$, and write the
fixed point data as $\{\varsigma(x),w_j(x)\}$. We continue to work
analytically.

\begin{proposition}\label{mulgenfuneq}
For any analytic exponential series $f$ over a $\bQ$-algebra $A$,
the genus $\ell_f$ is $T^k$-rigid on $M$ only if the functional
equation
\[
\sum_{\fix(M)}\!\varsigma(x)\prod_{j=1}^n
  \frac1{f(w_j(x)\cdot u)}\;=\;c
\]
is satisfied in $A[[u_1,\dots,u_k]]$, for some constant $c\in A$.
\end{proposition}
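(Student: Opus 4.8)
The plan is to derive this directly from Corollary~\ref{evgenus} together with the definition of $T^k$-rigidity in Definition~\ref{genusdefs}. First I would recall that by the functoriality of the homotopical genus construction, the analytic genus $\ell_f$ is realised (after rationalisation) by a homotopical genus $(t^D,v)$ whose associated $b$-series is $b^E(x^H)=x^H/Q_f(x^H)=f(x^H)$; more precisely, working over the $\bQ$-algebra $A$, the exponential series $f$ determines the logarithm $m^E=f^{-1}$, and $\ell_f$ coincides with $t^E_*$ after extending scalars to $A$. Thus Corollary~\ref{evgenus} applies verbatim with $b^E$ replaced by $f$, and gives
\[
\ell_f^{\,T^k}(M,c_\tau)\;=\;\sum_{\fix(M)}\varsigma(x)\prod_{j=1}^n\frac{1}{f(w_j(x)\cdot u)}
\]
in $A[[u_1,\dots,u_k]]$, where I have used the linearising identity $e^E(r_{x,j})=b^E(e^H(r_{x,j}))=f(w_j(x)\cdot u)$ exactly as in the proof of Proposition~\ref{evpargenus}.

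Next I would invoke the hypothesis that $\ell_f$ is $T^k$-rigid on $M$. By Definition~\ref{genusdefs}, this means precisely that $\ell_f^{\,T^k}(M,c_\tau)=\ell_f(M)$, i.e. the left-hand side above is the constant power series whose value is the (non-equivariant) genus $\ell_f(M)\in A$. Setting $c\letbe\ell_f(M)$ then yields the asserted functional equation. (One should note that $\ell_f(M)$ is recovered as the lowest nonvanishing coefficient $cf_n^E$ of the $t$-expansion in Remarks~\ref{3rems}(1), or equivalently as the constant term after clearing denominators; this is the content of Proposition~\ref{cfrels} and confirms that the constant $c$ is indeed $\ell_f(M)$ and not merely some abstract element of $A$.)

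The main subtlety—and the only place where care is needed—is the passage from the formal-group-theoretic setting of Corollary~\ref{evgenus} (which is phrased in terms of the spectrum-level class $b^E$ and the universal formal group law) to the purely analytic setting of an arbitrary exponential series $f$ over a $\bQ$-algebra $A$. Here I would argue that since $A$ is a $\bQ$-algebra, the classifying homomorphism $\varOmega^U_*\to A$ sending the universal logarithm to $f^{-1}$ factors through $H_*(\MU;\bQ)\cong\varOmega^U_*\otimes\bQ$, so $\ell_f$ is visibly of the form $t^E_*$ for the evaluating spectrum $E$ with $E_*=A$ (taking $v$ to be rationalisation), and Corollary~\ref{evgenus} applies. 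No convergence or homotopical integrality issue arises because we work entirely over $A\supseteq\bQ$ and only use the \emph{necessity} direction, so the identity is an identity of formal power series obtained by applying the ring homomorphism $t^E_*$ to the identity~\eqref{lfx} of Corollary~\ref{evutg}. I expect this translation step to be essentially bookkeeping once the homotopical-to-analytic dictionary of Subsection~4.1 is in hand; the genuinely geometric input—the localisation formula—has already been done in Theorem~\ref{evutgx}.
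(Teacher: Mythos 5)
Your proof is correct and follows essentially the same route as the paper's: apply Corollary~\ref{evgenus} with $b^E$ replaced by the exponential series $f$ (which makes sense because $A$ is a $\bQ$-algebra, so $f$ linearises the formal group law), and then invoke the definition of $T^k$-rigidity from Definition~\ref{genusdefs} to identify the left-hand side with the constant $\ell_f(M)$. The paper's own proof is just a two-sentence compression of exactly this argument, and your extra care about realising $\ell_f$ homotopically via an evaluating spectrum and about identifying $c=\ell_f(M)$ through Proposition~\ref{cfrels} is consistent with, though more explicit than, what the authors wrote.
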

\begin{proof}
Because $A$ is a $\bQ$-algebra, $f$ is an isomorphism between the
additive formal group law and $F_f(u_1,u_2)$. So the formula follows
from Corollary \ref{evgenus}.
\end{proof}

The quasitoric examples $\bC P^1$, $\bC P^2$, and the
$T^2$-manifold $S^6$ are all instructive.
\begin{example}
A genus $\ell_f$ is $T$-rigid on ~$\mathbb C P^1$ with the standard
complex structure only if the equation
\begin{equation}\label{cp1rig}
  \frac 1{f(u)}+\frac 1{f(-u)}\;=\;c,
\end{equation}
holds in $A[[u]]$. The general analytic solution is of the form
\[
f(u)\;=\;\frac{u}{q(u^2)+cu/2},\sts{where}q(0)\;=\;1.
\]
The Todd genus of Example \ref{cgenexs}(3) is defined over $\bQ[z]$
by $f_{td}(u)=(e^{zu}-1)/z$, and \eqref{cp1rig} is satisfied with
$c=-z$. So $td$ is $T$-rigid on $\bC P^1$, and $q(u^2)$ is given by
\[
(zu/2)\cdot\frac{e^{zu/2}+e^{-zu/2}}
  {e^{zu/2}-e^{-zu/2}}
\]
in $\bQ[z][[u]]$. In fact $td$ is multiplicative with respect to
$\bC P^1$ by \cite{hirz66}, so rigidity also follows from Section
\ref{geri}.
\end{example}

\begin{example}\label{excp2e}
A genus $\ell_f$ is $T^2$-rigid on the stably complex manifold
~$\mathbb C P^2_{(1,-1)}$ of Example \ref{cqpsplx} only if the
equation
\[
\frac{1}{f(u_1)f(u_2)}-\frac{1}{f(u_1)f(u_1+u_2)}+
\frac{1}{f(-u_2)f(u_1+u_2)}\;=\;c
\]
holds in $A[[u_1,u_2]]$. Repeating for $\bC P^2_{(-1,1)}$ (or
reparametrising $T^2$) gives
\begin{equation}\label{help1}
\frac{1}{f(u_2)f(u_1)}-\frac{1}{f(u_2)f(u_1+u_2)}+
\frac{1}{f(-u_1)f(u_1+u_2)}\;=\;c\,,
\end{equation}
and subtraction yields
\[
\left(\frac{1}{f(u_1)}+\frac{1}{f(-u_1)}\right)\frac{1}{f(u_1+u_2)}
\;=\;
\left(\frac{1}{f(u_2)}+\frac{1}{f(-u_2)}\right)\frac{1}{f(u_1+u_2)}\,.
\]
It follows that
\[
\frac{1}{f(u)}+\frac{1}{f(-u)}\;=\;c'\spandsp \frac{1}{f(-u)}\;=\;
c'-\frac{1}{f(u)}
\]
in $A[[u]]$, for some constant $c'$. Substituting in \eqref{help1}
gives
\[
\left(\frac{1}{f(u_1)}+\frac{1}{f(u_2)}-c'\right)\frac{1}{f(u_1+u_2)}\;=\;
\frac{1}{f(u_1)f(u_2)}-c\,,
\]
which rearranges to
\[ f(u_1+u_2)\;=\;
\frac{f(u_1)+f(u_2)-c'f(u_1)f(u_2)}{1-cf(u_1)f(u_2)}\,.
\]
So $f$ is the exponential series of the formal group law
$F_{\mathit{t2}}(u_1,u_2)$, with $c'=y+z$ and $c=yz$.
\end{example}
Example \ref{excp2e} provides an alternative proof of a recent
result of Musin \cite{musi09}.
\begin{proposition}\label{musin}
The $2$-parameter Todd genus $\mathit{t2}$ is universal for rigid genera.
\end{proposition}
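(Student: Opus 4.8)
The plan is to unpack ``$\mathit{t2}$ is universal for rigid genera'' as two claims: that $\mathit{t2}$ is $T^k$-rigid, and that every rigid genus $\ell\colon\varOmega^U_*\to R_*$ factors uniquely through $\mathit{t2}$. The factorisation claim is essentially a restatement of Example \ref{excp2e}. Since a genus is $T^k$-rigid if and only if its rationalisation is rigid in Krichever's sense, and $\varOmega^U_*$ is torsion-free, it suffices to treat the analytic genus $\ell_f$, $f=f_{\ell_\bQ}$, over the $\bQ$-algebra $R_*\otimes\bQ$. A rigid $\ell$ is in particular $T^2$-rigid on the stably complex manifolds $\bC P^2_{(1,-1)}$ and $\bC P^2_{(-1,1)}$ of Example \ref{cqpsplx}, so the computation of Example \ref{excp2e} goes through unchanged and identifies $f$ as the exponential series of $F_{\mathit{t2}}$, with parameters $c'=1/f(u)+1/f(-u)$ in $(R_*\otimes\bQ)_2$ and $c$ in $(R_*\otimes\bQ)_4$. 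Thus the formal group law classified by $\ell_\bQ$ is obtained from $F_{\mathit{t2}}$ by base change along the graded ring map $\bQ[y+z,yz]\to R_*\otimes\bQ$ sending $y+z\mapsto c'$ and $yz\mapsto c$; here $\bQ[y+z,yz]$ is the image of $\mathit{t2}$, since it contains both $-\mathit{t2}(\bC P^1)=y+z$ and $(y+z)^2-\mathit{t2}(\bC P^2)=yz$, and all other $\mathit{t2}(\bC P^j)$ are symmetric functions of $y,z$. As $\mathit{t2}$ is an epimorphism onto this ring and $\varOmega^U_*$ is torsion-free, $\ell$ itself factors through $\mathit{t2}$, uniquely.

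For the rigidity of $\mathit{t2}$: by Corollary \ref{evgenus} this amounts to showing that $\sum_{\fix(M)}\varsigma(x)\prod_{j=1}^n 1/f_{\mathit{t2}}(w_j(x)\cdot u)$ is constant for every $(M^{2n},c_\tau)$ with isolated fixed points, its value being then forced to equal $\mathit{t2}(M)$ by \eqref{ltkexpress}. One route is analytic: $F_{\mathit{t2}}$ is a rational law of the form in Examples \ref{specialcasesbs}(1), for which $1/f_{\mathit{t2}}(u)+1/f_{\mathit{t2}}(-u)$ is a constant, and feeding this symmetry together with the Conner--Floyd relations of Proposition \ref{cfrels} into the localisation sum forces its principal part to vanish and its value to be independent of $u$ --- the stably complex counterpart of Hirzebruch's proof that the $\chi_y$-genus is rigid. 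Alternatively one may use Theorem \ref{multandrig}: $\mathit{t2}$ is $T^k$-rigid as soon as it is fibre multiplicative, and fibre multiplicativity of the two-parameter Todd genus over simply connected bases is classical \cite{h-b-j94}, extending the case of $sg$ noted in Example \ref{rigidex}. Either way $\mathit{t2}$ is rigid, and with the first paragraph this gives the proposition and recovers Musin's theorem \cite{musi09}.

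I expect the genuine obstacle to be this second step --- establishing that $\mathit{t2}$ really is rigid, rather than merely that every rigid genus is a specialisation of it --- and in particular the partial-fraction bookkeeping in the analytic argument, or equivalently the transfer of the classical fibre multiplicativity of $\mathit{t2}$ to the integral stably tangentially complex setting required by Theorem \ref{multandrig}. The necessity direction is formal once Example \ref{excp2e} is available; the one subtlety there is that the roots $y,z$ need not individually lie in $R_*\otimes\bQ$, which is why the universal target is best taken to be the image $\bQ[y+z,yz]$ of $\mathit{t2}$ --- equivalently, $\mathit{t2}$ factored through the subring of symmetric functions in $L_*\cong\bQ[y^{\pm1},z^{\pm1}]$ of Examples \ref{cgenexs}(5).
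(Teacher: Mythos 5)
Your proof splits cleanly into the two halves that the paper also addresses, but the two halves compare differently.

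On universality, you are following the same route as the paper: the computation of Example \ref{excp2e} shows that $T^2$-rigidity on $\bC P^2_{(1,-1)}$ and $\bC P^2_{(-1,1)}$ forces $f$ to be the exponential series of $F_{\mathit{t2}}$, and your discussion of passing to the subring of symmetric functions $\bQ[y+z,yz]\subset L_*$ is a sensible refinement of what is implicit in the paper. The one detail you elide is why a genus that is merely $T$-rigid (circle-rigid, which is the notion Krichever and Musin actually work with) must satisfy the two-variable constraint of Example \ref{excp2e}. The paper handles this by noting that the stably complex structure on $\bC P^2_{(1,-1)}$ is $T^2$-invariant and then \emph{restricting to arbitrary subcircles} $T<T^2$: the one-variable identities obtained by specialising $(u_1,u_2)\mapsto(au,bu)$ over all primitive integer pairs $(a,b)$ are enough to recover the full two-variable functional equation. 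Your phrase ``a rigid $\ell$ is in particular $T^2$-rigid'' skips exactly this step.

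On the rigidity of $\mathit{t2}$ itself, the paper takes a genuinely different route from both of the ones you sketch: it simply cites Krichever \cite{kric76}, who proves rigidity of $\mathit{t2}$ for tangentially almost complex $T$-manifolds by giving a closed formula for the genus in terms of fixed point sets of arbitrary dimension, and then observes that the argument carries over to the stably complex case once the signs $\varsigma(x)$ are inserted. Your first proposed route (the analytic one via the Conner--Floyd relations and the symmetry $1/f_{\mathit{t2}}(u)+1/f_{\mathit{t2}}(-u)=\text{const}$) is in the spirit of Krichever's later arguments for $kv$ but is not filled in here, and you are right to flag the bookkeeping as the gap. Your second proposed route via Theorem \ref{multandrig} has a more concrete problem: \cite{h-b-j94} establishes fibre multiplicativity for the signature and the elliptic genus in the oriented setting, not for $\mathit{t2}$ in the stably tangentially complex setting that Theorem \ref{multandrig} requires; the closest classical result is multiplicativity of the $\chi_y$-genus for holomorphic fibrations, and the transfer from there to general stably complex $T^k$-bundles is precisely the step that would need proof. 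So that route begs the question rather than settling it. The honest summary is that your universality half is essentially the paper's argument (modulo the subcircle remark), while your rigidity half proposes two plausible but incomplete alternatives to the paper's citation of \cite{kric76}.
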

\begin{proof}
Krichever shows \cite{kric76} that $\mathit{t2}$ is rigid on
tangentially almost complex $T$-manifolds, by giving a formula for
the genus in terms of fixed point sets of arbitrary dimension. His
proof automatically extends to the stably complex situation by
incorporating the signs of isolated fixed points.

The canonical stably complex structure on $\bC P^2_{(1,-1)}$ is
$T^2$-invariant, so universality follows from Example
\ref{excp2e}(2) by restricting to arbitrary subcircles
\smash{$T<T^2$}.
\end{proof}

\begin{example}\label{fueqssix}
A genus $\ell_f$ is $T^2$-rigid on the almost complex manifold $S^6$
of Examples \ref{isfipoexas}(2) only if the equation
\[
  \frac{1}{f(u_1)f(u_2)f(-u_1-u_2)}+
  \frac{1}{f(-u_1)f(-u_2)f(u_1+u_2)}\;=\;c
\]
holds in $A[[u_1,u_2]]$, for some constant $c$. Rearranging as
\[
  \frac{f(u_1+u_2)}{f(u_1)f(u_2)}-
  \frac{-f(-u_1-u_2)}{(-f(-u_1))(-
f(-u_2))}
  \;=\;
cf(u_1+u_2)f(-u_1-u_2)
\]
and applying the differential operator $D=\frac\partial{\partial
u_1}-\frac{\partial}{\partial u_2}$ yields
\begin{equation}\label{pde}
f(u_1+u_2)D\Bigl(\frac1{f(u_1)f(u_2)}\Bigr)\;=\;
-f(-u_1-u_2)D\Bigl(\frac1{(-f(-u_1))(-f(-u_2))}\Bigr).
\end{equation}
Substituting
\[
  \phi_1(u_1+u_2)=\frac{f(u_1+u_2)}{-f(-u_1-u_2)},\quad
  \phi_2(u)=\frac1{-f(-u)},\quad
  \phi_4(u)=\frac1{f(u)}
\]
into \eqref{pde} then gives
\[
  \phi_1(u_1+u_2)=
  \frac{\phi'_2(u_1)\phi_2(u_2)-\phi_2(u_1)\phi'_2(u_2)}
  {\phi'_4(u_1)\phi_4(u_2)-\phi_4(u_1)\phi'_4(u_2)}\,,
\]
which is exactly the equation~\cite[(1)]{br-bu97}, with
$\phi_3(u)=\phi_2'(u)$ and $\phi_5(u)=\phi_4'(u)$. So by
~\cite[Theorem~1]{br-bu97}, the general analytic solution is of
the form $\exp(ax)/\phi(x,z)$, and $f$ coincides with the
exponential series $f_b$ of Theorem \ref{bsexp}.
\end{example}

\begin{theorem}\label{kvrigid}
The Krichever genus $kv$ is universal for genera that are rigid on
$SU$-manifolds.
\end{theorem}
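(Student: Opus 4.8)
The plan is to prove the two halves of the universality statement separately: that $kv$ is itself $T^k$-rigid on every $SU$-manifold, and that any genus rigid on all $SU$-manifolds factors through $kv$. Since $F_b$ is commutative its classifying map $\varOmega^U_*\to R_*$ is epimorphic, as noted in the proof of Theorem \ref{bsexp}, so any such factorisation is automatically unique; the two assertions together then give the result in exactly the sense of Proposition \ref{musin}.

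For the factorisation I would begin from the observation that the almost complex manifold $S^6$ of Examples \ref{isfipoexas}(2) is an $SU$-manifold: its $G_2$-invariant almost complex structure has $c_1=0$ because $H^2(S^6;\bZ)=0$. Hence any genus $\ell$ that is rigid on all $SU$-manifolds is in particular $T^2$-rigid on $S^6$. Passing to complex cobordism representatives and to the analytic exponential series over a $\bQ$-algebra, exactly as in the preamble to Proposition \ref{mulgenfuneq}, Example \ref{fueqssix} then forces the exponential series $f_\ell$ to satisfy the Baker--Akhiezer functional equation, whose general solution \cite[Theorem~1]{br-bu97} is $\exp(ax)/\phi(x,z)$; by Theorem \ref{bsexp} this is precisely the exponential series $f_b$ of $kv$, with the four free parameters $a,c_2,c_3,c_4$ specialised according to $\ell$. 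Therefore $F_\ell$ is the image of $F_b$ under a ring homomorphism of $R_*$, so $\ker(kv)\subseteq\ker(\ell)$ and $\ell$ factors through $kv$. Torsion-freeness of $\varOmega^U_*$ lets the rational computation recover $\ell$ on torsion-free targets, just as in the proof of Proposition \ref{musin}.

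For the rigidity of $kv$ on $SU$-manifolds I would argue as in Theorem \ref{suprop}. Up to cobordism every $SU$-class is represented by an almost complex $T^k$-manifold $M^{2n}$ carrying a structure-preserving $T^k$-action with isolated fixed points, and the vanishing of $c_1$ forces the sum of weight vectors $s(x)\letbe\sum_{j=1}^n w_j(x)$ to be independent of $x\in\fix(M)$ — the reformulation of $c_1=0$ used in Proposition \ref{combdesc1} and in the proof of Theorem \ref{suprop}, valid in general by \cite[Lemma~2.1]{kric90}. Corollary \ref{evgenus}, rationalised and read analytically, then presents $kv^{\,T^k}(M)$ as $\sum_{\fix(M)}\varsigma(x)\prod_{j=1}^n 1/f_b(w_j(x)\cdot u)$, and the quasi-periodicity of the Baker--Akhiezer function $\phi(x,z)$ of \eqref{bakakh} makes this a constant function of $u$ precisely because $s(x)$ is constant; this is Krichever's rigidity theorem \cite[Theorem~2.1]{kric90}, which we may invoke directly. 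Combined with the factorisation step, $kv$ is then seen to be the universal genus rigid on $SU$-manifolds.

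The main obstacle is this last step — the analytic verification that the Baker--Akhiezer localisation sum collapses to a constant. That is the substantive content of Krichever's work, resting on the functional properties of the Weierstrass $\sigma$ and $\zeta$ functions (equivalently, on $\phi(x,z)$ solving the Lam\'e equation), and it is the very feature that makes the Baker--Akhiezer form forced; Example \ref{fueqssix} supplies only the reverse implication, namely the necessity of that form. A secondary, routine point is the usual reconciliation of the integral statement about genera with the rational, analytic functional-equation argument, handled by the torsion-freeness of $\varOmega^U_*$ and the epimorphism of $kv$ exactly as for Proposition \ref{musin}.
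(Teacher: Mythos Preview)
Your approach matches the paper's: rigidity of $kv$ by citing Krichever \cite{kric90}, with the extension from almost complex to stably complex $SU$-manifolds handled by inserting the signs $\varsigma(x)$ as in Proposition \ref{musin}; and universality via the $SU$-structure on $S^6$ together with Example \ref{fueqssix}.

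One sentence should be removed, however. You assert that ``up to cobordism every $SU$-class is represented by an almost complex $T^k$-manifold carrying a structure-preserving $T^k$-action with isolated fixed points''. This is unjustified and unnecessary: rigidity must be checked on \emph{every} stably complex $T^k$-equivariant $SU$-manifold, not on a chosen representative of a non-equivariant class, and there is no reason such almost complex representatives exist. The paper avoids this entirely by invoking Krichever's result directly --- his argument already treats fixed-point components of arbitrary dimension --- and then observing that signs can be incorporated. Since you cite Krichever anyway, just drop the representative claim. A smaller point: the paper passes from $T$-rigidity (the hypothesis on a genus rigid on $SU$-manifolds) to the $T^2$-rigidity on $S^6$ that Example \ref{fueqssix} actually uses by restricting to arbitrary subcircles $T<T^2$; you jump straight to $T^2$-rigidity without comment.
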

\begin{proof}
The rigidity of $kv$ on almost complex manifolds with zero first
Chern class is established in~\cite{kric90}; the proof extends
immediately to general $SU$-manifolds, by incorporating signs as in
Proposition \ref{musin}.

On $S^6$, the almost complex $T^2$-invariant structure of Example
\ref{isfipoexas}(2) is stably $SU$, so universality follows from
Example \ref{fueqssix} by restricting to arbitrary subcircles
$T<T^2$.
\end{proof}
An alternative characterisation of Krichever's genus is given by
Totaro \cite[Theorem 6.1]{tota00}, in the course of proving that
$kv$ extends to singular complex algebraic varieties. Building on
work of H\"ohn \cite{hohn91}, Totaro interprets $kv$ as universal
amongst genera that are invariant under \emph{$SU$-flops}; this
property is equivalent to $SU$-fibre multiplicativity with respect
to a restricted class of stably tangentially $SU$ bundles. A
second version of the latter approach is dealt with by Braden and
Feldman \cite{br-fe05}.


\subsection{Realising fixed point data}

In 1938, PA Smith considered actions of finite groups $G$ on the
sphere. In the case of two fixed points, he asked if the tangential
representations necessarily coincide; this became a celebrated
question that now bears his name, although the answer is known to be
negative for certain $G$. It suggests the following realisation
problem for torus actions.
\begin{problem}\label{realise}
  For any set of signs $\varsigma(x)$ and matching weight vectors
  $w_j(x)$, find necessary and sufficient conditions for the existence
  of a tangentially stably complex $T^k$-manifold with the given
  fixed point data.

\end{problem}

This problem was originally addressed by Novikov \cite{novi67} for
$\bZ/p$ actions, and his emphasis on the necessity of the
Conner-Floyd relations initiated much of the subsequent output of
the Moscow school. It was also a motivating factor for tom Dieck
\cite{tomd70}, who provided a partial answer in terms of the
$K$-theoretic Hurewicz genus and the Hattori-Stong theorem. We have
introduced the relations in Proposition \ref{cfrels}, and the analytic
viewpoint provides an elegant way of combining their implications
under the action of an appropriate genus.
\begin{theorem}\label{genuscf} For any exponential series $f$ over
a $\bQ$-algebra $A_*$, the formal Laurent series
\[
  \sum_{\fix(M)}\!\varsigma(x)\prod_{j=1}^n
  \frac1{f(w_j(x)\cdot u)}
\]
lies in $A[[u_1,\dots,u_k]]$, and has zero principal part.
\end{theorem}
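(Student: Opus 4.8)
The plan is to reduce Theorem~\ref{genuscf} to Corollary~\ref{evgenus} by choosing a suitable homotopical genus whose associated exponential series is exactly the given $f$. Concretely, given an analytic exponential series $f$ over a $\bQ$-algebra $A_*$, we may take $R_*$ to be the image of the classifying homomorphism and let $\ell_f\colon\varOmega_*^U\to A_*$ be the Hirzebruch genus with $x/Q_{\ell_f}(x)=f(x)$, in the notation of \S\ref{geri}. Since $A_*$ is rational, $\ell_f$ factors through a homotopical genus whose evaluating spectrum has coefficient ring $A_*$ itself, and whose formal group law $F_f$ is linearised by $f$ over $A_*$. Then the series $b^E$ of \eqref{bmseries} coincides with $f$, and Corollary~\ref{evgenus} gives, for any stably tangentially complex $M^{2n}$ with isolated fixed points,
\[
  \ell_f^{\,T^k}\varPhi(M)\;=\;\sum_{\fix(M)}\!\varsigma(x)\prod_{j=1}^n\frac1{f(w_j(x)\cdot u)}
\]
in $A_*[[u_1,\dots,u_k]]$. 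The left-hand side manifestly lies in $A_*[[u_1,\dots,u_k]]$ by construction of the universal toric genus; this already shows that the formal Laurent series on the right actually belongs to the power series ring, with no terms of negative degree. This is the first half of the claim.

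For the statement about the principal part, I would combine Proposition~\ref{cfrels} with the evaluation just obtained, or argue directly using Remarks~\ref{3rems}(1). Introduce the rescaling $u\mapsto tu$ and form the series $\sum_l c\?f^{\,\ell_f}_l\,t^l$ defined by
\[
  \sum_{\fix(M)}\!\varsigma(x)\prod_{j=1}^n\frac{t}{f(tw_j(x)\cdot u)}
\]
over the localisation of $A_*[[u_1,\dots,u_k]]$. Applying $\ell_f$ to the series $\sum_l\cf_l t^l$ of \eqref{tseries} shows that $c\?f^{\,\ell_f}_l=\ell_f(\cf_l)$; by Proposition~\ref{cfrels} these vanish for $0\le l<n$, and $c\?f^{\,\ell_f}_n=\ell_f(M)$. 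Dividing out the overall factor $t^n$ recovers $\ell_f^{\,T^k}\varPhi(M)(tu)$, whose constant term in $t$ is $\ell_f(M)$; hence the principal part (in the grading where each $u_i$ has degree $1$, say) of the Laurent series is zero, as the lowest-degree contribution is the degree-$0$ term $\ell_f(M)$ and all genuinely negative-degree terms cancel.

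The main obstacle is the first reduction step: one must ensure that an \emph{arbitrary} analytic exponential series $f$ over a $\bQ$-algebra really does arise from a genus in the sense compatible with \S\ref{geri} and Corollary~\ref{evgenus}, i.e.\ that there is no homotopical obstruction and that the linearising identity $e^E(r_{x,j})=f(w_j(x)\cdot u)$ is available. This is handled by noting that over a rational ring \emph{every} formal group law is linearisable --- this is exactly the hypothesis invoked in the proof of Proposition~\ref{mulgenfuneq}, where $f$ is an isomorphism between the additive formal group law and $F_f$ --- so the passage from Corollary~\ref{evgenus} to the displayed formula for $\ell_f^{\,T^k}\varPhi(M)$ goes through verbatim, without needing $f$ to be topologically realised. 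Everything else is bookkeeping: tracking degrees through the substitution $u\mapsto tu$ and quoting Proposition~\ref{cfrels}.
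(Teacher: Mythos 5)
The paper states Theorem~\ref{genuscf} without proof, leaving it as a consequence of the preceding machinery; your argument is precisely the one the authors intend, and it is correct. You apply the genus $\ell_f$ to Corollary~\ref{evgenus}, using the fact that over a $\bQ$-algebra every formal group law is linearised by its exponential (the same observation invoked in the proof of Proposition~\ref{mulgenfuneq}), and then quote Proposition~\ref{cfrels} for the vanishing of the low-order coefficients.

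One small remark on exposition: the two assertions in the statement --- that the series lies in $A[[u_1,\dots,u_k]]$ and that its principal part is zero --- are really the same claim phrased twice (as the paper's own usage in Remarks~\ref{3rems}(1) makes clear). Your proof proves this once via the genus argument and a second time via the $t$-scaling and Proposition~\ref{cfrels}; this is harmless redundancy rather than an error, and the second route has the advantage of identifying the constant term as $\ell_f(M)$. You are also right to flag the change of orientation implicit in passing from $\varPhi(M)\in\varOmega^U_*[[u_1,\dots,u_k]]$ (with $u_i$ the $\MU$-orientations) to the displayed Laurent series in the ``additive'' variables $u_i=x^H_i$: Corollary~\ref{evgenus} has already absorbed the reparametrisation $x^E=b^E(x^H)$, and since $b^E=f$ is an invertible power series over the $\bQ$-algebra $A_*$ the substitution is an isomorphism $A_*[[u]]\to A_*[[u]]$ and does not affect membership in the power series ring.
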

As $f$ varies over formal exponential series, Theorem
\eqref{genuscf} provides strong restrictions on the fixed point
data.

\begin{example}
Applying Example \ref{agcfrels} in case $E=M$ and $X=*$ shows that
the augmentation genus $ag$ constrains the fixed point data of $M$
to satisfy the condition
\[
  \sum_{\fix(M)}\!\varsigma(x)\prod_{j=1}^n
  \frac1{w_j(x)\cdot u}\;=\;0
\]
in $\bZ[[u_1,\dots,u_k]]$. Homotopical versions of this description
are implicit in Remarks \ref{3rems} and Example \ref{agcfrels}.
\end{example}

%
%
%
%
%
%
%
%
%

\end{document}